\newtheorem{theorem}{Theorem}[section]
\newtheorem{lemma}[theorem]{Lemma}
\newtheorem{proposition}[theorem]{Proposition}
\newtheorem{corollary}[theorem]{Corollary}
\newtheorem{definition}[theorem]{Definition}
\theoremstyle{definition}
\newtheorem{example}[theorem]{Example}
\newtheorem{remark}[theorem]{Remark}
\begin{document}
	\title[]{Set Convergences via bornology}
	\author{Yogesh Agarwal \and Varun Jindal}
	
	\address{Yogesh Agarwal \& Varun Jindal: Department of Mathematics, Malaviya National Institute of Technology Jaipur, Jaipur-302017, Rajasthan, India}
	\email{yagarwal247m@gmail.com, vjindal.maths@mnit.ac.in}
	
%	\address{Varun Jindal: Department of Mathematics, Malaviya National Institute of Technology Jaipur, Jaipur-302017, Rajasthan, India}
%	\email{vjindal.maths@mnit.ac.in}
	
	\subjclass[2020]{Primary 54B20; Secondary 54A10, 54E35, 54A20}	
	\keywords{Bornology, Uniform convergence, Gap functional, Wijsman topology, Attouch-Wets topology, Hausdorff metric topology, Bornological convergence}	
	\maketitle	
	\begin{abstract}
	This paper examines the equivalence between various set convergences, as studied in \cite{beer2013gap, Idealtopo, BCL}, induced by an arbitrary bornology $\mathcal{S}$ on a metric space $(X,d)$. Specifically, it focuses on the upper parts of the following set convergences: convergence deduced through uniform convergence of distance functionals on $\mathcal{S}$ ($\tau_{\mathcal{S},d}$-convergence); convergence with respect to gap functionals determined by $\mathcal{S}$ ($G_{\mathcal{S},d}$-convergence); and bornological convergence ($\mathcal{S}$-convergence). In particular, we give necessary and sufficient conditions on the structure of the bornology $\mathcal{S}$ for the coincidence of $\tau_{\mathcal{S},d}^+$-convergence with $\mathsf{G}_{\mathcal{S},d}^+$-convergence, as well as $\tau_{\mathcal{S},d}^+$-convergence with $\mathcal{S}^+$-convergence. A characterization for the equivalence of $\tau_{\mathcal{S},d}^+$-convergence and $\mathcal{S}^+$-convergence, in terms of certain convergence of nets, has also been given earlier by Beer, Naimpally, and Rodriguez-Lopez in \cite{Idealtopo}.  To facilitate our study, we first devise new characterizations for $\tau_{\mathcal{S},d}^+$-convergence and $\mathcal{S}^+$-convergence, which we call their miss-type characterizations. %Several results of literature related to classical hyperspace topologies are obtained as direct corollaries to our main results.   
	\end{abstract}
	\section{Introduction}

	A topology on a collection of subsets of a topological space $(X,\tau)$ is called a hyperspace topology. In the literature, a number of hyperspace topologies have been studied on the collection $\mathcal{P}_0(X)$ of all nonempty subsets of a topological space $(X,\tau)$, and on $CL(X)$, the family of all nonempty closed subsets of $(X,\tau)$ \cite{ToCCoS,michael1951}. The theory of hyperspaces is fundamental to various mathematical branches such as convex analysis, functional analysis, optimization theory, and variational analysis \cite{ToCCoS, bv1996, lucchetti2006convexity, rockafellar2009variational}.
	
	For a metric space $(X,d)$, the two most studied hyperspace topologies on $CL(X)$ are the Vietoris and the Hausdorff metric topologies \cite{haus,viet-1,viet-2}. %While Vietoris topology may be defined for an arbitrary topological space to define Hausdorff metric topology we need underlying space to be metric space. 
	However, these topologies are too strong while dealing with unbounded sets. For example, the sequence of lines $L_n = \{(x,y) : y = \frac{x}{n}\}$ appears naturally converging to the horizontal axis yet does not converge with respect to either of these two topologies. This prompted researchers to define several weaker set convergences and hyperspace topologies. In the realm of metric spaces, the interplay between set convergence and geometric set functionals has proven to be an important approach for studying various set convergences and corresponding topologies on subsets of a metric space. 
	
	The distance functional is the most useful of the set functionals. For a nonempty subset $A$ of a metric space $(X,d)$, the distance functional $d(\cdot, A)$ on $X$ corresponding to $A$ is defined as $$x \to d(x, A) = \inf\{d(x,a): a \in A\} \text{ for }x \in X.$$ So through distance functional, we can identify a closed subset $A$ of $(X,d)$ with the continuous function $d(\cdot, A) \in C(X)$. Using this identification, in 2008, Beer et al. \cite{Idealtopo} gave a unified approach to define hyperspace topologies on $\mathcal{P}_0(X)$ (or $CL(X)$). In particular, for a family $\mathcal{S}$ of nonempty subsets of a metric space $(X,d)$, they studied a hyperspace topology corresponding to which a net $(A_\lambda)$ in $CL(X)$ converges to $A \in CL(X)$ if and only if the associated net $(d(\cdot,A_{\lambda}))$ of distance functionals converges uniformly on members of $\mathcal{S}$ to $d(\cdot, A)$. This topology on $CL(X)$ is denoted by $\tau_{\mathcal{S},d}$. When $\mathcal{S}$ is $\mathcal{P}_0(X)$ and $\mathcal{B}_d(X)$ (the family of all $d$-bounded sets in $(X,d)$) respectively, the topology $\tau_{\mathcal{S},d}$ reduces to the classical Hausdorff metric topology $(\tau_{H_d})$ and Attouch-Wets topology $(\tau_{AW_d})$ respectively. Furthermore, when $\mathcal{S} = \mathcal{F}(X)$, the collection of all nonempty finite subsets of $X$, we obtain the well-known Wijsman topology $(\tau_{W_d})$. For more on these classical topologies, see (\cite{attouch1991topology,AW-1,wijsequivalence, francaviglia1985quasi,wijsconvergence}).  
	%In this approach one uses a family of set functionals, corresponding to some family of subsets of a metric space, to define a set convergence or a topology on $\mathcal{P}_0(X)$ or $CL(X)$.In the realm of metric spaces, the most of these set convergences and hypertopologies have been defined using certain kind of geometric set functionals determined by a family $\mathcal{S}$ of nonempty subsets of underlying metric space. 
	
	Two important extensions of the distance functional are the gap and excess functionals. These extensions play a central role in providing weak formulations of many hyperspace topologies (see, \cite{ToCCoS, beer1992distance, gapexcess, beer1993weak}). For a nonempty subset $S$ of a metric space $(X,d)$, the gap functional determined by $S$ is defined as $$A \to D_d(S,A) = \inf\{d(x,a): x\in S \text{ and } a \in A\} \text{ for }A \in \mathcal{P}_0(X).$$  
	Another unified approach to hyperspace topologies on subsets of a metric space $(X,d)$ is through the weak topology generated by a family of gap functionals determined by members of a family $\mathcal{S}\subseteq \mathcal{P}_0(X)$. Beer, Constantini, and Levi considered this approach formally in \cite{beer2013gap}. The weak topology determined by such gap functionals is known as the \textit{gap topology}, denoted by $\mathsf{G}_{\mathcal{S},d}$. In particular, when $\mathcal{S} = \mathcal{F}(X)$, $\mathcal{B}_d(X)$, and $\mathcal{P}_0(X)$, the gap topology reduces to Wijsman topology, bounded proximal topology, and proximal topology, respectively. 
	
	For a metric space $(X,d)$ and a family $\mathcal{S}$ of nonempty subsets of $X$, in 2004, Lechicki et al. \cite{BCL} defined a new kind of set convergence known as the bornological convergence (denoted by $\mathcal{S}$-convergence). The bornological convergence generalizes two classical set convergences: the Hausdorff metric convergence (when $\mathcal{S} = \mathcal{P}_0(X)$), and the Attouch-Wets convergence (when $\mathcal{S} = \mathcal{B}_d(X)$). The bornological convergence was further studied in \cite{beer2009operator,beer2023bornologies,Bcas,gapexcess,PbciAWc,Ucucas,Idealtopo}. 
	
	It is to be noted that for $\mathcal{S} = \mathcal{P}_0(X)$ or $\mathcal{B}_d(X)$, $\tau_{\mathcal{S},d}$-convergence and $\mathcal{S}$-convergence coincide, however in either case, the $\mathsf{G}_{\mathcal{S},d}$-convergence is weaker. As customary to hyperspace convergences, if we decompose each of these convergences into two halves, upper ($+$) and lower ($-$), then  for $\mathcal{S} = \mathcal{P}_0(X)$ or $\mathcal{B}_d(X)$, we have $\tau_{\mathcal{S},d}^{+} = \mathsf{G}_{\mathcal{S},d}^+ = \mathcal{S}^+$ but $\mathsf{G}_{\mathcal{S},d}^-$ may be strictly weaker than $\mathcal{S}^- = \tau_{\mathcal{S},d}^{-}$. 
	
	% In 2008, Beer et al. gave a unified way of defining hyperspace topologies using distance functionals and an ideal $\mathcal{S}$ of subsets of $(X,d)$ (denoted by $\tau_{\mathcal{S},d}$). The convergence of a net $(A_{\lambda})$ to $A$ with respect to $\tau_{\mathcal{S},d}$ amounts to the uniform convergence of the net of distance functionals $(d(\cdot, A_{\lambda}))$ to $d(\cdot, A)$ uniformly on the members of $\mathcal{S}$.     
	
	%In this paper, we have considered three kinds of set convergences on $CL(X)$ for an arbitrary bornology $\mathcal{S}$ (see, \cite{hogbe}) on a metric space $(X,d)$: uniform convergence of distance functionals on $\mathcal{S}$, that is, $\tau_{\mathcal{S},d}$-convergence; convergence with respect to gap functionals determined by $\mathcal{S}$ ($G_{\mathcal{S},d}$-convergence); and bornological convergence, that is, $\mathcal{S}$-convergence.
	In this paper, we are considering the above mentioned three types of set convergences on $CL(X)$ for an arbitrary bornology $\mathcal{S}$ (see, \cite{hogbe}) on a metric space $(X,d)$. There has been tremendous interest in studying the relation among these set convergences \cite{gapexcess,PbciAWc,Idealtopo}. Beer, Naimpally, and Rodriguez-Lopez  characterized the equivalence of $\tau_{\mathcal{S},d}^{+}$-convergence and $\mathcal{S}^+$-convergence in \cite{Idealtopo} while in \cite{gapexcess} Beer and Levi characterized the equivalence of $G_{\mathcal{S},d}^{+}$-convergence and $\mathcal{S}^+$-convergence. However, the equivalence between $\tau_{\mathcal{S},d}^{+}$ and $G_{\mathcal{S},d}^{+}$ is not known. We fill this void in the present paper. We also provide a new independent characterization for the equivalence of  $\tau_{\mathcal{S},d}^{+}$ and $\mathcal{S}^+$-convergences. To study these equivalences, we first propose new characterizations for $\tau_{\mathcal{S},d}^{+}$-convergence and $\mathcal{S}^+$-convergence. While studying $\tau_{\mathcal{S},d}^+$-convergence vis-à-vis other convergences, we discover in this paper that we require a variational type of enlargement of $S \in \mathcal{S}$ in which the situation demands enlargements of different sizes at each point of $S$. We formulate this in terms of enlargement of $S \in \mathcal{S}$ by positive functions. This idea is central to our investigations. %We would like to remark that in the hyperspace theory, it is common to use enlargements of subsets of a metric space (\cite{ToCCoS,beer2013gap,gapexcess,Idealtopo,BCL}).               
	
	 The paper is organized as follows. In section 2, we define various hyperspace convergences and Section 3 delves into basic convergence results. Section 4 is devoted to provide a new representation for the $\tau_{\mathcal{S},d}^+$-convergence. Some classical results are obtained as corollaries to our main theorem (Theorem \ref{Miss theorem for S topology}) of this section. The main result (Theorem \ref{gap and Stopology equivaalence}) of Section 5 unfolds the necessary and sufficient condition for the coincidence $\tau_{\mathcal{S},d}^+ = G_{\mathcal{S},d}^{+}$. This necessary and sufficient condition turns out to be a covering condition (see, Definition \ref{Strictly included sets}) on enlargements of members of $\mathcal{S}$ by positive functions.    In the final section, we present a new perspective to look at $\mathcal{S}^+$-convergence. Then the coincidence of $\tau_{\mathcal{S},d}^+$-convergence and $\mathcal{S}^+$-convergence is characterized. Several examples and counter-examples are given throughout the paper to support the findings.

	\section{Preliminaries }
	In this section, we give definitions of various set convergences and other key concepts. We start with the definition of a bornology which is central to our analysis. For a metric space $(X,d)$, a nonempty family $\mathcal{S} \subseteq \mathcal{P}_0(X)$ which is hereditary, closed under finite union, and that forms a cover of $X$ is called a \textit{bornology} on $X$. Some important bornologies on a metric space $(X,d)$ are $\mathcal{F}(X),$ $\mathcal{B}_d(X),$ and $\mathcal{P}_0(X)$. Two other useful bornologies on a metric space $(X,d)$ are $\mathcal{K}(X)$, the set of all nonempty relatively compact subsets of $X$, and $\mathcal{T}\mathcal{B}_{d}(X)$ the set of all nonempty $d$-totally bounded subsets of $X$.
	
   The open (closed) ball centered at $x\in X$ and radius $\epsilon > 0$ in $(X,d)$ is denoted by $B_d(x, \epsilon)$ ($\overline{B_d}(x, \epsilon)$). For $A \subseteq X$ and $\epsilon > 0$, the \textit{$\epsilon$-enlargement of $A$}, denoted by $B_d(A, \epsilon)$, is defined as $B_d(A, \epsilon) = \{x \in X: d(x, A) < \epsilon\}$. Note that for $\epsilon > 0, r> 0$, we have $B_d(B_d(A,\epsilon), r) \subseteq B_d(A, \epsilon +r)$.
   
   \begin{definition}
   	A metric space $(X,d)$ is called an \textit{almost convex metric space} if for $A \in \mathcal{P}_0(X)$ and $\epsilon >0, r > 0$, we have $B_d(B_d(A, \epsilon), r) = B_d(A, \epsilon + r)$. 
   \end{definition}
	Clearly, every normed linear space is an almost convex metric space.
%	We focus our attention to three key approaches of hyperspace convergence which have been mentioned above. A family $\mathcal{S} \subseteq \mathcal{P}_0(X)$ which is closed under finite union, taking subsets, and forms a cover of $X$ is called a \textit{Bornology} on $X$. Examples of some bornologies include: $\mathcal{F}(X),$ $\mathcal{B}_d(X),$ and $\mathcal{P}_0(X)$. The set of all nonempty relatively compact subsets of $X$ denoted by $\mathcal{K}(X)$ and the set of all nonempty totally bounded subsets of $X$ denoted by $\mathcal{T}\mathcal{B}_{d}(X)$ also forms a bornology on $X$. A family $\mathcal{S} \subseteq \mathcal{P}_0(X)$ closed under finite union and taking subsets is called an \textit{ideal} on $X$. We denote the collection of all nonempty closed subsets of $X$ by $CL(X)$. For $A \subseteq X$, the \textit{$\epsilon$-enlargement of $A$}, denoted by $B_d(A, \epsilon)$, is defined as, $B_d(A, \epsilon) = \{x \in X: d(x, A) < \epsilon\}$. 
	
%	\subsection{Bornological convergence}
	An interesting representation of the Attouch-Wets convergence in terms of enlargements of sets is as follows: a net $(A_{\lambda})$ in $\mathcal{P}_0(X)$ $\tau_{AW_d}$-converges to $A$ provided $\forall$ $\epsilon > 0$ and $B \in \mathcal{B}_{d}(X)$, $A_\lambda \cap B \subseteq B_d(A,\epsilon)$ and $A \cap B \subseteq B_d(A_\lambda, \epsilon)$ eventually.
	
	Lechicki et al. \cite{BCL} generalized this representation of Attouch-Wets convergence by replacing $\mathcal{B}_d(X)$ with any nonempty family $\mathcal{S}$ of subsets of a metric space $(X,d)$. The resulting set convergence is known as \textit{bornological convergence}. 
	
	\begin{definition}\normalfont
		Suppose $\mathcal{S}$ is a nonempty family of subsets of a metric space $(X,d)$. A net $(A_\lambda)$ is said to be \textit{lower bornological convergent} to $A$ in $\mathcal{P}_0(X)$ denoted by $\mathcal{S}^-$-convergence, if for each $\epsilon > 0$ and $S \in \mathcal{S}$, eventually $ A \cap S \subseteq B_d(A_\lambda, \epsilon)$.

		We say $(A_\lambda)$ is \textit{upper bornological convergent} to $A$ in $\mathcal{P}_0(X)$ denoted by $\mathcal{S}^+$-convergence, if for each $\epsilon > 0$ and $S \in \mathcal{S}$, eventually $ A_\lambda \cap S \subseteq B_d(A, \epsilon)$.
		
		A net $(A_\lambda)$ $\mathcal{S}$-converges to $A$ provided it is both $\mathcal{S}^-$-convergent and $\mathcal{S}^+$-convergent to $A$. The $\mathcal{S}$-convergence is known as the bornological convergence.
	\end{definition}
	
	Clearly, for $\mathcal{S} = \mathcal{B}_d(X)$ the $\mathcal{S}$-convergence is compatible with the Attouch-Wets topology $\tau_{AW_d}$. However, in general, the bornological convergence corresponding to a bornology $\mathcal{S}$ need not be topological. In the past, several authors have shown interest in exploring the conditions under which this convergence becomes topological \cite{Bcas,PbciAWc,BCL}.
	
%	\subsection{Distance topology}
In 2008, G. Beer et al. in \cite{Idealtopo} gave a unified approach to study hyperspace topologies through uniform convergence of distance functionals on members of a family $\mathcal{S} \subseteq \mathcal{P}_0(X)$. 
	\begin{definition}[\cite{cao}]\normalfont
		Let $(X,d)$ be a metric space and $\mathcal{S} \subseteq \mathcal{P}_0(X)$. Then for any $S\in \mathcal{S}$ and $\epsilon > 0$ sets of the form $$ [S,\epsilon]^+ = \{(A,C) \in \mathcal{P}_0(X) \times \mathcal{P}_0(X) : d(x,A) - d(x,C) < \epsilon ~~ \forall x \in S \}$$ forms a base for a quasi-uniformity on $\mathcal{P}_0(X)$. The corresponding topology is denoted by  $\tau_{\mathcal{S},d}^+$. 
		%Clearly, a net $(A_\lambda)$ is $\tau_{\mathcal{S},d}^+$-convergent to $A$ if and only if  for any $\epsilon > 0$ and $S \in \mathcal{S}$, eventually, $d(x,A) - d(x, A_\lambda) < \epsilon$ for all $x \in S$.
	
	Similarly, $\tau_{\mathcal{S},d}^-$ is the topology on $\mathcal{P}_0(X)$ generated by the quasi-uniformity given by sets of the form 
	$$ [S,\epsilon]^- = \{(A,C) \in \mathcal{P}_0(X) \times \mathcal{P}_0(X) : d(x,C) - d(x,A) < \epsilon ~~ \forall x \in S \}.$$
	
	%Therefore, a net $(A_\lambda)_{\lambda \in \Lambda}$ is $\tau_{\mathcal{S},d}^-$-convergent to $A$ if and only if  for any $\epsilon > 0$ and $S \in \mathcal{S}$, eventually, $d(x,A_\lambda) - d(x, A) < \epsilon$ for all $x \in S$.   
\end{definition}	
		
		%The \textit{upper distance topology} on $\mathcal{P}_0(X)$, denoted by $\tau_{\mathcal{S},d}^+$, is the topology generated by quasi-uniformity having a base sets of the form $$ [S,\epsilon]^+ = \{(A,C) \in \mathcal{P}_0(X) \times \mathcal{P}_0(X) : d(x,A) - d(x,C) < \epsilon ~~ \forall x \in S \}$$
		%Therefore, a net $(A_\lambda)_{\lambda \in \Lambda}$ is $\tau_{\mathcal{S},d}^+$-convergent to $A$ if and only if  for any $\epsilon > 0$ and $S \in \mathcal{S}$, eventually, $d(x,A) - d(x, A_\lambda) < \epsilon$ for all $x \in S$. 

%		The \textit{lower distance topology} on $\mathcal{P}_0(X)$, denoted by $\tau_{\mathcal{S},d}^-$, is the topology generated by the quasi-uniformity having a base sets of the form 
%		$$ [S,\epsilon]^- = \{(A,C) \in \mathcal{P}_0(X) \times \mathcal{P}_0(X) : d(x,C) - d(x,A) < \epsilon ~~ \forall x \in S \}$$
%		
%		Therefore, a net $(A_\lambda)_{\lambda \in \Lambda}$ is $\tau_{\mathcal{S},d}^-$-convergent to $A$ if and only if  for any $\epsilon > 0$ and $S \in \mathcal{S}$, eventually, $d(x,A_\lambda) - d(x, A) < \epsilon$ for all $x \in S$.   
%	\end{definition}
	
	The topology $\tau_{\mathcal{S},d}$ is the supremum of $\tau_{\mathcal{S},d}^-$ and $\tau_{\mathcal{S},d}^+$. This topology is a uniformizable topology on $\mathcal{P}_0(X)$ and the family $\{U_{\mathcal{S},\epsilon} : S \in \mathcal{S}, \epsilon > 0\}$ forms a base for a compatible uniformity for $\tau_{\mathcal{S},d}$, where $$ U_{\mathcal{S},\epsilon} = \{(A,B) \in \mathcal{P}_0(X) \times \mathcal{P}_0(X) : |d(x,A) - d(x,B)| < \epsilon\ \ \forall x \in S\}.$$  
	
So a net $(A_\lambda)$ in $\mathcal{P}_0(X)$ is $\tau_{\mathcal{S},d}$-convergent to a nonempty set $A$ if and only if for any $\epsilon > 0$ and $S \in \mathcal{S}$, eventually, $|d(x,A) - d(x, A_\lambda)| < \epsilon$ for all $x \in S$, that is, if the net $(d(\cdot, A_{\lambda}))$ converges to $d(\cdot, A)$ uniformly on members of $\mathcal{S}$.
	
%	In special cases when $\mathcal{S} = \mathcal{F}(X)$, $\mathcal{B}_d(X)$, and $\mathcal{P}_0(X)$ respectively, the topology $\tau_{\mathcal{S},d}$ reduces to the Wijsman topology, Attouch-Wets topology, and classical Hausdorff distance topology, respectively.

%	\subsection{Gap topology}
%	\begin{definition}\normalfont
%		Let $(X,d)$ be a metric space. For $A, B \in  \mathcal{P}_0(X)$, the gap between $A$ and $B$ is defined by $\mathcal{D}_d(A,B) = \inf\{d(a,b): a \in A, b \in B\}$. It is easy to see that $\mathcal{D}_d(A,B) = \mathcal{D}_d(B,A)$.  
%	\end{definition}
	 
	In \cite{beer2013gap}, the authors studied the weak topology generated by a family of gap functionals with fixed left argument from an arbitrary family $\mathcal{S}$ of subsets of a metric space $(X,d)$. This topology is known as the gap topology.  
	\begin{definition}\normalfont
		 For an arbitrary family $\mathcal{S}$ of nonempty subsets of a metric space $(X,d)$, the \textit{gap topology} is defined as the weakest topology on $\mathcal{P}_0(X)$ for which all functionals of the form $C \rightarrow D_d(S,C) \hspace{.25cm}(S \in \mathcal{S})$ are continuous. It is denoted by $\mathsf{G}_{\mathcal{S},d}$. The two halves of the gap topology $\mathsf{G}_{\mathcal{S},d}$ are:
		\begin{enumerate}
			\item [(i)] the \textit{upper gap topology} is the weakest topology on $\mathcal{P}_0(X)$ for which each member of the family of gap functionals $\{D_d(S,\cdot) : S \in \mathcal{S}\}$ is lower semi-continuous. It is denoted by $\mathsf{G}_{\mathcal{S},d}^+$.
			\item [(ii)] the \textit{lower gap topology} is the weakest topology on $\mathcal{P}_0(X)$ for which each member of the family of gap functionals $\{D_d(S,\cdot) : S \in \mathcal{S}\}$ is upper semi-continuous. It is denoted by $\mathsf{G}_{\mathcal{S},d}^-$. 
		\end{enumerate}  
		So a subbase for a compatible uniformity for the topology $\mathsf{G}_{\mathcal{S},d}$ consists of all sets of the kind:  
			$$\{(A,C) \in \mathcal{P}_0(X) \times \mathcal{P}_0(X): |D_d(S,A) - D_d(S,C)| < \epsilon\} \  \ (S \in \mathcal{S}, \epsilon > 0).$$
		
%		In particular cases, when $\mathcal{S} = \mathcal{F}(X)$, $\mathcal{B}_d(X)$, and $CL(X)$ the gap topology reduces to Wijsman topology, bounded proximal topology, and Proximal topology, respectively.  
		
	\end{definition}   	
	
	\begin{remark}\label{lowergaptoplogy}
		\textit{It is to be noted that whenever the family $\mathcal{S}$ contains all singletons the lower gap topology reduces to the lower Vietoris topology} (\cite{beer2013gap}).
	\end{remark}

Let $\mathcal{S} \subseteq \mathcal{P}_0(X)$. Then for $S_1, \ldots, S_n \in \mathcal{S}$ and $\epsilon_1,\ldots,\epsilon_n > 0$, define, 
$$ \mathscr{A}_d^+(S_1, \ldots,S_n; \epsilon_1, \ldots,\epsilon_n) = \{C \in \mathcal{P}_0(X) : D_d(C, S_i) > \epsilon_i~ \text{for}~ i = 1, \ldots,n\}.$$ The collection $\{\mathscr{A}_d^+(S_1,\ldots,S_n; \epsilon_1,\ldots,\epsilon_n): n \in \mathbb{N}, S_i \in \mathcal{S}, \epsilon_i > 0, 1 \leq i \leq n\}$ together with $\mathcal{P}_0(X)$ forms a base for the upper gap topology on $\mathcal{P}_0(X)$ (\cite{beer2013gap}).
  
		In \cite{beer2013gap}, the authors gave the following important characterization for the $\mathsf{G}_{\mathcal{S},d}^+$-convergence.
	\begin{theorem}\label{Gapconvergence}
		Let $(X,d)$ be a metric space, and let $\mathcal{S} \subseteq \mathcal{P}_0(X)$. Suppose $(A_\lambda)$ is a net in $\mathcal{P}_0(X)$ and $A \in \mathcal{P}_0(X)$. Then the following assertions are equivalent:
		\begin{enumerate}[(i)]
			\item $(A_\lambda)$ $\mathsf{G}_{\mathcal{S},d}^+$-converges to $A$;
			\item for $S \in \mathcal{S}$, and $0 < \alpha < \epsilon$, whenever $A \cap B_d(S, \epsilon) = \emptyset$, then $A_\lambda \cap B_d(S, \alpha) = \emptyset$ eventually.
		\end{enumerate}
	\end{theorem}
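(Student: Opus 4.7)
My plan is to translate the two conditions into statements about values of the gap functional, and then exploit the fact that the family of subbasic open sets $\mathscr{A}_d^+(S;\epsilon)=\{C\in\mathcal{P}_0(X):D_d(S,C)>\epsilon\}$ generates $\mathsf{G}_{\mathcal{S},d}^+$. The translation dictionary is the elementary pair of equivalences
\begin{equation*}
A\cap B_d(S,r)=\emptyset \iff D_d(S,A)\geq r, \qquad D_d(S,A)>r \iff \exists\, r'>r \text{ with } A\cap B_d(S,r')=\emptyset.
\end{equation*}
Once this dictionary is in hand, both implications are a matter of juggling strict versus non-strict inequalities by inserting an intermediate real number.

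For (i) $\Rightarrow$ (ii), suppose $S\in\mathcal{S}$ and $0<\alpha<\epsilon$ with $A\cap B_d(S,\epsilon)=\emptyset$. The dictionary gives $D_d(S,A)\geq\epsilon>\alpha$, so $A$ belongs to the subbasic open set $\mathscr{A}_d^+(S;\alpha)$. By $\mathsf{G}_{\mathcal{S},d}^+$-convergence, eventually $A_\lambda\in\mathscr{A}_d^+(S;\alpha)$, that is $D_d(S,A_\lambda)>\alpha$, and the dictionary again yields $A_\lambda\cap B_d(S,\alpha)=\emptyset$ eventually.

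For (ii) $\Rightarrow$ (i), it suffices to check membership in each basic open neighborhood $\mathscr{A}_d^+(S_1,\ldots,S_n;\epsilon_1,\ldots,\epsilon_n)$ of $A$; since a net is eventually in a finite intersection of eventual sets, I can argue one coordinate at a time and intersect at the end. Fix $i$ with $D_d(S_i,A)>\epsilon_i$, and choose reals $\alpha_i,\tilde\epsilon_i$ with $\epsilon_i<\alpha_i<\tilde\epsilon_i\leq D_d(S_i,A)$. Then $A\cap B_d(S_i,\tilde\epsilon_i)=\emptyset$, so hypothesis (ii) applied with $0<\alpha_i<\tilde\epsilon_i$ yields $A_\lambda\cap B_d(S_i,\alpha_i)=\emptyset$ eventually, whence $D_d(S_i,A_\lambda)\geq\alpha_i>\epsilon_i$ eventually. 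Intersecting over $i=1,\ldots,n$ places $A_\lambda$ in the prescribed basic neighborhood eventually.

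I do not anticipate a serious obstacle here; the only mild care needed is the strict/non-strict inequality bookkeeping, specifically the need for the buffer $\tilde\epsilon_i$ strictly between $\alpha_i$ and $D_d(S_i,A)$ in the (ii) $\Rightarrow$ (i) direction, which is exactly what the hypothesis $0<\alpha<\epsilon$ in (ii) is designed to accommodate. It is also worth noting that the formulation (ii) does not require $A\cap B_d(S,\epsilon)=\emptyset$ to force a strict inequality on the limit, only the weaker conclusion $A_\lambda\cap B_d(S,\alpha)=\emptyset$; this mild slack is precisely what makes (ii) the natural miss-type restatement of lower semicontinuity of the gap functional $D_d(S,\cdot)$ at $A$.
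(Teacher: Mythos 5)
Your proof is correct: the dictionary $A\cap B_d(S,r)=\emptyset \iff D_d(S,A)\ge r$ together with the insertion of the buffer $\tilde\epsilon_i$ strictly between $\epsilon_i$ and $D_d(S_i,A)$ handles the strict/non-strict bookkeeping exactly as needed, and checking basic neighborhoods $\mathscr{A}_d^+(S_1,\ldots,S_n;\epsilon_1,\ldots,\epsilon_n)$ coordinatewise is legitimate since a net is eventually in a finite intersection of eventual sets. The paper states this theorem without proof, citing it from \cite{beer2013gap}, so there is no in-paper argument to compare against; your argument is the standard one for this characterization.
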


In this paper, we give analogous characterizations for the $\tau_{\mathcal{S},d}^+$-convergence (see, Theorem \ref{Miss theorem for S topology}) and $\mathcal{S}^+$-convergence (see, Theorem \ref{upperbornologicalrepresentation}). Moreover, we study all these convergences on $CL(X)$ and when $\mathcal{S}$ is a bornology of subsets of $X$. 

	\section{Basic Relations}
	In this section, we give relations between the hyperspace convergences defined in the previous section (see, \cite{gapexcess}). We also provide some examples showing the non-equivalence of these convergences in general. 
%	\noindent For two bornologies $\mathcal{S}$ and $\mathcal{C}$ on $X$ such that $\mathcal{S} \subseteq \mathcal{C}$, the above defined hyperspace convergences reflects the relation between bornologies. We leave the obvious proofs for reader.
%	\begin{proposition}
%		Let $(X,d)$ be a metric space and let $\mathcal{S}, \mathcal{C}$ be two bornologies on $X$. If $\mathcal{S} \subseteq \mathcal{C}$, then the following statements hold:
%		\begin{enumerate}[(i)]
%			\item $\tau_{\mathcal{S},d}^+ \subseteq \tau_{\mathcal{C},d}^+$ ;
%			\item $\mathsf{G}_{\mathcal{S},d}^+ \subseteq \mathsf{G}_{\mathcal{C},d}^+$;
%			\item $\mathcal{S}^+ \leq \mathcal{C}^+$. 
%		\end{enumerate}
%	\end{proposition}
	\begin{proposition}\label{finerthangap}
		Let $(X,d)$ be a metric space and let $\mathcal{S}$ be a bornology on $X$. Then   $\mathsf{G}_{\mathcal{S},d}^+\subseteq \tau_{\mathcal{S},d}^+$ on $CL(X)$ ($\mathcal{P}_0(X)$).	
	\end{proposition}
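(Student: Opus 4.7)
The plan is to verify that every subbasic open set of $\mathsf{G}_{\mathcal{S},d}^+$ is $\tau_{\mathcal{S},d}^+$-open. From the remark preceding Theorem \ref{Gapconvergence}, a subbase for $\mathsf{G}_{\mathcal{S},d}^+$ is furnished by sets of the form $\mathscr{A}_d^+(S;\epsilon) = \{C \in \mathcal{P}_0(X): D_d(C,S) > \epsilon\}$ with $S \in \mathcal{S}$ and $\epsilon > 0$. So the task reduces to fixing such an $S$ and $\epsilon$, picking an arbitrary $A \in \mathscr{A}_d^+(S;\epsilon)$, and exhibiting a basic $\tau_{\mathcal{S},d}^+$-neighborhood of $A$ that sits inside $\mathscr{A}_d^+(S;\epsilon)$.

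Setting $r := D_d(A,S) > \epsilon$, I would pick any $\delta \in (0, r - \epsilon)$ and consider the $\tau_{\mathcal{S},d}^+$-neighborhood of $A$ determined by the entourage $[S,\delta]^+$, namely $V := \{C \in \mathcal{P}_0(X): d(x,A) - d(x,C) < \delta \text{ for all } x \in S\}$. For any $C \in V$ and any $x \in S$ one has $d(x,C) > d(x,A) - \delta \geq D_d(A,S) - \delta = r - \delta$. Taking the infimum over $x \in S$ gives $D_d(C,S) \geq r - \delta > \epsilon$, so $C \in \mathscr{A}_d^+(S;\epsilon)$, as desired.

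An equivalent and slightly cleaner phrasing would be to show directly that, for each fixed $S \in \mathcal{S}$, the functional $C \mapsto D_d(S,C)$ is $\tau_{\mathcal{S},d}^+$-lower semi-continuous (which is exactly the estimate above). Since $\mathsf{G}_{\mathcal{S},d}^+$ is by definition the weakest topology on $\mathcal{P}_0(X)$ that renders each such gap functional lower semi-continuous, the inclusion $\mathsf{G}_{\mathcal{S},d}^+ \subseteq \tau_{\mathcal{S},d}^+$ follows immediately. The same argument restricts to $CL(X)$.

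The proof is essentially a one-line application of the triangle inequality for the gap functional, so there is no genuine obstacle; the only point requiring care is matching the sign convention in the definition of $[S,\delta]^+$ with the correct direction of the resulting inequality $D_d(C,S) \geq D_d(A,S) - \delta$, which is what makes a \emph{lower} bound on distances translate into a \emph{lower} bound on the gap functional.
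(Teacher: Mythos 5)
Your proof is correct and follows essentially the same route as the paper: both reduce to showing that a $[S,\delta]^+$-neighborhood of $A$ lies inside the gap-subbasic (resp.\ basic) set, via the estimate $D_d(S,C)\geq D_d(S,A)-\delta$. The only cosmetic difference is that you check subbasic sets $\mathscr{A}_d^+(S;\epsilon)$ one at a time, whereas the paper handles a finite intersection $\mathscr{A}_d^+(S_1,\ldots,S_n;\epsilon_1,\ldots,\epsilon_n)$ at once by taking $S=\cup_i S_i$ and $r=\min_i r_i/2$; both are valid and equivalent.
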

	\begin{proof}
		Take $A \in CL(X)$, $S_1,\ldots,S_n \in \mathcal{S}$, and $\epsilon_1,\ldots,\epsilon_n > 0 $ such that $A \in \mathscr{A}_d^+(S_1,\ldots,S_n; \epsilon_1,\ldots,\epsilon_n)$. Let $r_i = D_d(S_i,A) - \epsilon_i > 0$ for $1\leq i\leq n$. Take $r = \min\{\frac{r_i}{2}: i = 1,\ldots,n\}$ and $S = \cup_{i=1}^nS_i \in \mathcal{S}$. We claim that $[S,r]^+(A) \subseteq \mathscr{A}_d^+(S_1,\ldots,S_n; \epsilon_1,\ldots,\epsilon_n)$. To see this, consider $C \in [S,r]^+(A)$. Then $d(x, A) - d(x,C) < r \  \ \forall x \in S_i$ and for each $1\leq i\leq n$. So for each $i = 1,\ldots,n$,  $D_d(S_i,A) < r + d(x,C) \ \ \forall x \in S_i$, which implies, $D_d(S_i,A) - r \leq D_d(S_i,C)$. Thus, $C \in \mathscr{A}_d^+(S_1,\ldots,S_n;\epsilon_1,\ldots,\epsilon_n)$. 
	\end{proof}
	%For two convergences $\tau_1$ and $\tau_2$ on a non empty set $X$, $\tau_1 \leq \tau_2$ denotes that $\tau_1$ is weaker convergence than $\tau_2$. 
	\begin{proposition}\label{gapfinerthanS}
		Let $(X,d)$ be a metric space and let $\mathcal{S}$ be a bornology on $X$. Then $\mathcal{S}^+ \leq \mathsf{G}_{\mathcal{S},d}^+$ on $CL(X)$, that is, every $\mathsf{G}_{\mathcal{S},d}^+$-convergent net is $\mathcal{S}^+$-convergent. 
	\end{proposition}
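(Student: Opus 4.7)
The plan is to reduce the required $\mathcal{S}^+$-conclusion to the hypothesis-friendly miss-type characterization of $\mathsf{G}_{\mathcal{S},d}^+$-convergence provided by Theorem \ref{Gapconvergence}, by chopping the test set $S$ into the portion that actually threatens the inclusion into $B_d(A,\epsilon)$.

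Fix a net $(A_\lambda)$ that $\mathsf{G}_{\mathcal{S},d}^+$-converges to $A$, and fix $S\in\mathcal{S}$ and $\epsilon>0$. I would like to show that eventually $A_\lambda\cap S\subseteq B_d(A,\epsilon)$. The first step is to rewrite this inclusion as an emptiness condition: set $S' = S\setminus B_d(A,\epsilon) = \{x\in S : d(x,A)\geq \epsilon\}$, and observe that $A_\lambda\cap S\subseteq B_d(A,\epsilon)$ holds iff $A_\lambda\cap S' = \emptyset$. If $S' = \emptyset$ there is nothing to prove, so assume $S'\neq\emptyset$; since $\mathcal{S}$ is hereditary, $S'\in \mathcal{S}$.

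Next I would translate the definition of $S'$ into a gap statement about $A$: by construction every $s\in S'$ satisfies $d(s,A)\geq \epsilon$, so $D_d(S',A)\geq \epsilon$, which is equivalent to $A\cap B_d(S',\epsilon) = \emptyset$. This puts us squarely in the hypothesis of the implication $(i)\Rightarrow(ii)$ of Theorem \ref{Gapconvergence} with the set $S'\in\mathcal{S}$ and the pair $0<\alpha<\epsilon$ (choose, e.g., $\alpha = \epsilon/2$). The conclusion of that theorem then gives that eventually $A_\lambda\cap B_d(S',\alpha) = \emptyset$. Since $S'\subseteq B_d(S',\alpha)$, this forces $A_\lambda\cap S' = \emptyset$ eventually, which is exactly $A_\lambda\cap S\subseteq B_d(A,\epsilon)$ eventually, as required.

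There is essentially no obstacle here; the only potentially delicate point is making sure that the hereditary property of $\mathcal{S}$ is genuinely needed (it is, in order to legitimately invoke Theorem \ref{Gapconvergence} with $S'$ in place of $S$) and that strict/non-strict inequalities are handled correctly when moving between $d(\cdot,A)\geq \epsilon$ and $A\cap B_d(S',\epsilon)=\emptyset$. Both are routine but worth stating explicitly in the write-up.
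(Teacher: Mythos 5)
Your proof is correct. The only point worth double-checking is the application of Theorem \ref{Gapconvergence} to $S'=S\setminus B_d(A,\epsilon)$: this is legitimate precisely because $\mathcal{S}$ is hereditary, and you flag this; the passage from $d(s,A)\ge\epsilon$ for all $s\in S'$ to $A\cap B_d(S',\epsilon)=\emptyset$ is also handled correctly since the enlargement is defined by a strict inequality.

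Your route differs from the paper's in two minor but genuine ways. First, you argue directly, reducing the inclusion $A_\lambda\cap S\subseteq B_d(A,\epsilon)$ to the emptiness of $A_\lambda\cap S'$ and then invoking the miss-type characterization of $\mathsf{G}_{\mathcal{S},d}^+$-convergence (Theorem \ref{Gapconvergence}); the paper instead argues by contradiction and works straight from the subbasic neighborhoods $\mathscr{A}_d^+(\cdot;\cdot)$. Second, your auxiliary set $S'$ is carved out of $S$ by removing $B_d(A,\epsilon)$, whereas the paper harvests its auxiliary set $S_2$ from the points $a_\lambda\in A_\lambda\cap S_1$ that witness the failure of the inclusion along a cofinal set. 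Your choice of $S'$ is arguably cleaner, since it does not depend on the net and makes the eventual statement immediate rather than extracted from a contradiction; the paper's choice keeps the argument self-contained at the level of the subbase and shows directly that $D_d(S_2,A_\lambda)=0$ frequently. Both proofs rest on the same essential facts: heredity of the bornology and the lower semicontinuity of the gap functionals.
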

	\begin{proof}
		Let $(A_{\lambda})$ be a net that $\mathsf{G}_{\mathcal{S},d}^+$-converges to $A$ in $CL(X)$. Suppose by contradiction, there exist $S_1 \in \mathcal{S}$ and $\epsilon > 0$ such that $A_\lambda \cap S_1 \nsubseteq B_d(A,\epsilon)$ frequently. Then there is a cofinal set $\Lambda_1 \subseteq \Lambda$ such that $S_2 \cap B_d(A, \epsilon) = \emptyset$, where $S_2 = \{a_\lambda \in A_{\lambda} \cap S_1 : \lambda \in \Lambda_1\}$. Note that $S_2 \in \mathcal{S}$ and $A \in \mathscr{A}_d^+(S_2;\frac{\epsilon}{2})$. But $A_\lambda \cap S_2 \neq \emptyset$ for all $\lambda \in \Lambda_1$, which contradicts our assumption.      
	\end{proof}
	Using Proposition \ref{finerthangap} and Proposition \ref{gapfinerthanS}, we have the following relation between various set convergences. 
	\begin{corollary}\label{allthreerelations}
		Let $(X,d)$ be a metric space and let $\mathcal{S}$ be a bornology on $X$. Then $\tau_{\mathcal{S},d}^+$-convergence $\Rightarrow \mathsf{G}_{\mathcal{S},d}^+$-convergence $\Rightarrow \mathcal{S}^+$-convergence on $CL(X)$ $(\mathcal{P}_0(X))$. 
	\end{corollary}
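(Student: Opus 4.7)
The plan is essentially to read off the corollary directly from the two preceding propositions by composing their implications. First I would note that Proposition \ref{finerthangap} establishes a topological inclusion $\mathsf{G}_{\mathcal{S},d}^+ \subseteq \tau_{\mathcal{S},d}^+$ on $CL(X)$ (or $\mathcal{P}_0(X)$). Since a finer topology has more convergent nets than a coarser one (every $\tau_{\mathcal{S},d}^+$-open set is $\mathsf{G}_{\mathcal{S},d}^+$-open, hence convergence in $\tau_{\mathcal{S},d}^+$ forces convergence in $\mathsf{G}_{\mathcal{S},d}^+$), this yields the first implication $\tau_{\mathcal{S},d}^+\text{-convergence} \Rightarrow \mathsf{G}_{\mathcal{S},d}^+\text{-convergence}$.

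Next I would invoke Proposition \ref{gapfinerthanS}, which directly asserts that every $\mathsf{G}_{\mathcal{S},d}^+$-convergent net is $\mathcal{S}^+$-convergent. Chaining this with the previous implication gives $\tau_{\mathcal{S},d}^+\text{-convergence} \Rightarrow \mathsf{G}_{\mathcal{S},d}^+\text{-convergence} \Rightarrow \mathcal{S}^+\text{-convergence}$, which is exactly the statement of the corollary.

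There is really no obstacle here; the content has been done in the two propositions. The only small point worth mentioning is that the second arrow is phrased in the proposition as a statement about nets rather than as a topological comparison (indeed $\mathcal{S}^+$-convergence need not be topological in general), so the corollary should be read purely at the level of nets, and the proof only uses the net-theoretic formulation in both steps. Because both propositions are valid on $CL(X)$ and on $\mathcal{P}_0(X)$, the chain of implications transfers verbatim to either setting, justifying the parenthetical in the statement.
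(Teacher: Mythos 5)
Your argument is correct and is exactly the paper's: the corollary is obtained by chaining Proposition \ref{finerthangap} (a topological inclusion, hence an implication between convergences) with Proposition \ref{gapfinerthanS} (already stated at the level of nets). Your remark that the second arrow must be read net-theoretically, since $\mathcal{S}^+$-convergence need not be topological, is a sensible clarification but does not change the approach.
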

	
	We now give some examples showing that the reverse implications in Corollary \ref{allthreerelations} may not hold in general. We first present an example showing that $\mathcal{S}^+ \nRightarrow \mathsf{G}_{\mathcal{S},d}^+ \nRightarrow  \tau_{\mathcal{S},d}^+$ such an example seems not to be available in the literature. 
	
	\begin{example}\label{counterxample1}
		Let $X = \{(x,y) : x\geq  0, y\geq 0\} \subseteq \mathbb{R}^2$ and $d = d_e$, the Euclidean metric. Let $\mathcal{S} = \{B \subseteq S \cup F : F \in \mathcal{F}(X)\}$, where $S = \{(0,n) : n \in \mathbb{N}\}\cup\{(0,0)\}$. Then $\mathcal{S}$ is a bornology on $X$.  Note that neither $\overline{B_d}(S, \frac{1}{2}) = \{x \in X: d(x, S) \leq \frac{1}{2
		.}\} \notin \mathcal{S}$ and nor $\overline{B_d}(S, \frac{1}{2}) = X$. So by Theorem $3$ of \cite{gapexcess}, $\mathcal{S}^+$-convergence $\nRightarrow$ $\mathsf{G}_{\mathcal{S},d}^+$-convergence on $CL(X)$.
		
		 Next we claim that $\tau_{\mathcal{S},d}^+$ is strictly finer than $\mathsf{G}_{\mathcal{S},d}^+$ on $CL(X)$. Take $A = \{(x,0) : x \geq 5 \}$. Then $A \in CL(X)$. Consider $[S, \frac{1}{2}]^+(A)$, a neighborhood of $A$ in $\tau_{\mathcal{S},d}^+$. It is enough to show that no $\mathsf{G}_{\mathcal{S},d}^+$-neighborhood of $A$ of the form $\mathscr{A}_d^+(S_1,\ldots,S_n; \epsilon_1,\ldots,\epsilon_n)$ is contained in $[S, \frac{1}{2}]^+(A)$, where $S_i \in \mathcal{S}$ and $\epsilon_i > 0$ for all $1 \leq i \leq n$.  Choose $n_0 \in \mathbb{N}$ such that $n_0> \max\{x : (x,y) \in S_i, i = 1,\ldots,n\} + 2r$, where $r = \max \{\epsilon_i : i =1,\ldots,n\}$. Let $C = \{(n_0,y) : y \geq 1\}$. Since for any $(n_0,y) \in C$ and any $(x',y') \in S_i$ for $1\leq i\leq n$, $d((n_0,y),(x',y')) \geq |n_0-x'|\geq 2r$, we have $D_d(S_i, C) \geq 2r \ \ \forall i = 1,\ldots,n$. 		
%		Since, if for some $i = 1,...,n$ and $(n_0,y) \in C$ we have, $d_2((n_0,y),S_i) < 2r$ . Then there exists $(x_i,y_i) \in S_i$ such that $d_2((n_0,y),(x_i,y_i)) < 2r$. So, we have, \begin{eqnarray*}
%			%\begin{split}
%			\sqrt{(n_0 - x_i)^2 + (y - y_i)^2} < 2r \\
%			\Rightarrow	\sqrt{(x_i+2r-x_i)^2+ (y-y_i)^2}  < \sqrt{(n_0 - x_i)^2 + (y - y_i)^2} < 2r \\
%			\Rightarrow \hspace*{1.1cm}2r < \sqrt{4r^2 + (y-y_i)^2} < \sqrt{(n_0 - x_i)^2 + (y - y_i)^2} < 2r 
%			%\end{split}
%		\end{eqnarray*}
%		
%		which is not true.
So $C \in \mathscr{A}_d^+(S_1,\ldots,S_n; \epsilon_1,\ldots,\epsilon_n)$. However, $C \notin [S, \frac{1}{2}]^+(A)$ as \begin{equation*}
			d((0,2n_0),A)  - d((0,2n_0), (n_0,2n_0)) = \sqrt{4n_0^2 + 25} - n_0 > n_0.
		\end{equation*} \qed	
	\end{example}
	
	\begin{example}
		Let $(X,d) = (\mathbb{R},d_u)$, where $d_u$ is the usual metric on $\mathbb{R}$ and $\mathcal{S} = \mathcal{F}(\mathbb{R})$. Then $\mathsf{G}_{\mathcal{S},d}^+ = \tau_{W_{d}}^+$. Suppose $A_n = \{\frac{1}{n}\}$ for $n \in \mathbb{N}$ and $A = \{1\}$. Then $(A_n)$ is $\mathcal{F}(\mathbb{R})^+$-convergent to $A$. However, $d(0, \{1\}) - d(0, A_n) = 1 - \frac{1}{n} > \frac{1}{2}$ for large $n$. Thus, $(A_n)$ is not $\tau_{W_{d}}^+$-convergent to $A$.\qed  
	\end{example}
	
	\section{Miss-type characterization for $\tau_{\mathcal{S},d}^+$-convergence}

The Theorem \ref{Gapconvergence} characterized $\mathsf{G}_{\mathcal{S},d}^+$-convergence in terms of enlargements of members of $\mathcal{S}$ by positive constants.  In this section, we present a parallel criterion for the $\tau_{\mathcal{S},d}^+$-convergence whenever $\mathcal{S}$ is a bornology. This new characterization, in authors opinion, makes it easier to deal with $\tau_{\mathcal{S},d}^+$-convergence. Surprisingly, in the case of $\tau_{\mathcal{S},d}^+$-convergence, enlargements of members of $\mathcal{S}$ by positive constants need not be sufficient (see, Example  \ref{counterxample1}). %Also, it is core for investigating relations between the various hyperspace convergences in our analysis. Additionally, according to the authors, it significantly points out the role of bornology $\mathcal{S}$ for $\tau_{\mathcal{S},d}^+$-convergence compared to the classical definition. 

We start with some preliminary notations. Define, 
	$$ \mathbb{R}_{+}^{X} = \{f: X\to \mathbb{R}: f(x) > 0 \text{ for all } x \in X\}.$$% \in \mathbb{R}^{X}~\vert ~ f:X \rightarrow \mathbb{R}^+ \}.$$
	For simplicity, we write $\mathcal{Z}^+ = \mathbb{R}_{+}^{X}$. Given any nonempty subset $A$ of $X$ and $f \in \mathcal{Z}^+$, the \textit{$f$-enlargement of $A$} is denoted by $B_d(A,f)$, and is defined as $$B_d(A,f) = \cup_{x \in A}B_d(x, f(x)).$$

	\begin{theorem}\label{Miss theorem for S topology}
		Let $(X,d)$ be a metric space and let $\mathcal{S}$ be a bornology on $X$. Suppose $(A_\lambda)$ is a net in $CL(X)$ and $A \in CL(X)$. Then the following statements are equivalent: 
		\begin{enumerate}[(i)]
			\item for $S \in \mathcal{S}$ and $f, g \in \mathcal{Z}^+$ with $\inf \{g(x) - f(x) : x \in S\} > 0$, whenever $A$ misses $g$-enlargement of $S$, then $(A_\lambda)$ misses the $f$-enlargement of $S$ eventually; 
			
			\item $(A_\lambda)$ is $\tau_{\mathcal{S},d}^+$-convergent to $A$. 
		\end{enumerate}
	\end{theorem}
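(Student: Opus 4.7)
The plan is to reformulate both conditions as pointwise inequalities involving the distance functionals. Condition (ii) says $d(x, A) - d(x, A_\lambda) < \epsilon$ eventually on $S$, while ``$A$ misses the $g$-enlargement of $S$'' amounts to $d(x, A) \geq g(x)$ on $S$ (and similarly for $A_\lambda$ and $f$). The direction (ii) $\Rightarrow$ (i) will be a routine manipulation; the nontrivial direction (i) $\Rightarrow$ (ii) will be argued by contradiction, with the main creative step being the construction of $g$ and $f$ from $d(\cdot, A)$ on a well-chosen sub-bornology member.

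For (ii) $\Rightarrow$ (i), I would fix $S \in \mathcal{S}$ and $f, g \in \mathcal{Z}^+$ with $\delta := \inf_{x \in S}(g(x) - f(x)) > 0$, assume $d(x, A) \geq g(x)$ on $S$, invoke (ii) with tolerance $\delta$, and chain the inequalities $d(x, A_\lambda) > d(x, A) - \delta \geq g(x) - \delta \geq f(x)$ to conclude that $A_\lambda \cap B_d(S, f) = \emptyset$ eventually.

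For (i) $\Rightarrow$ (ii), if (ii) fails I would extract $S \in \mathcal{S}$, $\epsilon > 0$, and cofinally many indices $\lambda$ admitting $x_\lambda \in S$ with $d(x_\lambda, A_\lambda) \leq d(x_\lambda, A) - \epsilon$; each such $x_\lambda$ satisfies $d(x_\lambda, A) \geq \epsilon$ and hence lies in $T := \{x \in S : d(x, A) \geq \epsilon\}$, which belongs to $\mathcal{S}$ by heredity. I then take $g(x) = \max\{d(x, A), \epsilon\}$ and $f(x) = \max\{d(x, A) - \epsilon/2, \epsilon/4\}$: both are in $\mathcal{Z}^+$ and on $T$ they equal $d(x, A)$ and $d(x, A) - \epsilon/2$ respectively, so $\inf_{x \in T}(g(x) - f(x)) = \epsilon/2 > 0$ and $A \cap B_d(T, g) = \emptyset$ (since $d(a, x) \geq d(x, A) = g(x)$ for every $a \in A$ and $x \in T$). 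Applying (i) with $(T, g, f)$ then yields eventually $d(x, A_\lambda) \geq d(x, A) - \epsilon/2$ on $T$, which evaluated at the cofinal points $x_\lambda \in T$ contradicts $d(x_\lambda, A_\lambda) \leq d(x_\lambda, A) - \epsilon$.

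The main obstacle is the choice of $g$ and $f$. One cannot usefully apply (i) with $S$ itself: wherever $d(\cdot, A)$ is too small on $S$, no positive $g$ can make $A$ miss $B_d(S, g)$, so a workable $g$ must be tailored to a region where $d(\cdot, A)$ is uniformly bounded below. The hereditary property of $\mathcal{S}$ is used precisely to thin $S$ down to the set $T$ on which $d(\cdot, A) \geq \epsilon$, and the variable-radius enlargement with radius function $d(\cdot, A)$ there supplies the sharp pointwise control needed; this is the variational enlargement device flagged in the introduction.
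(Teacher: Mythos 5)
Your proposal is correct and follows essentially the same route as the paper: both directions hinge on the same variational enlargement, taking $g=d(\cdot,A)$ and $f=d(\cdot,A)-\epsilon/2$ on the hereditary restriction of $S$ to the set where $d(\cdot,A)\geq\epsilon$, with your (i)$\Rightarrow$(ii) merely phrased contrapositively where the paper argues directly. Your explicit use of maxima to keep $f,g$ positive on all of $X$ is a small tidying of a point the paper leaves implicit.
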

	\begin{proof}
		$(i) \Rightarrow (ii)$. Let $S \in \mathcal{S}$ and $\epsilon > 0$. 
		If $S \subseteq B_d(A, \epsilon)$, then we are done. Otherwise, consider $S' =\{x \in S : x \notin B_d(A, \epsilon)\}$. Define $f, g \in \mathcal{Z}^+$ such that $g(x) = d(x,A)$ and $f(x) = d(x,A) - \frac{\epsilon}{2} $ for all $x \in S'$. Then $\inf_{x\in S'}(g(x)-f(x)) > 0$ and $A \cap B_d(S',g) = \emptyset$. By the hypothesis, we get a $\lambda_0$ such that $A_\lambda \cap B_d(S', f) = \emptyset$ for all $\lambda \geq \lambda_0$. This would imply that $d(x,A) - d(x, A_{\lambda}) < \epsilon$ for all $x \in S$ and $\lambda \geq \lambda_0$. Hence the net $(A_{\lambda})$ is $\tau_{\mathcal{S},d}^+$-convergent to $A$.

		$(ii) \Rightarrow (i)$. Suppose $(A_\lambda)$ is a net that $\tau_{\mathcal{S},d}^+$-converges to $A$ in $CL(X)$. If $A = X$, then there is nothing to prove. Otherwise, let $S \in \mathcal{S}$ and $ f, g \in \mathcal{Z}^+$ with $\inf \{g(x) - f(x) : x \in S\} = r > 0$ and $A \cap B_d(S,g) = \emptyset$. Then by the hypothesis, there is a $\lambda_0$ such that for all $x \in S$ and $\lambda \geq \lambda_0$, we have $d(x, A) - d(x, A_\lambda) < r$ . So whenever $\lambda \geq \lambda_0$ and $x \in S$, we have $d(x, A_\lambda) > f(x)$. Thus, $A_\lambda \cap B_d(S,f) = \emptyset$ for all $\lambda \geq \lambda_0$.  
		
	\end{proof}
 Theorem \ref{Miss theorem for S topology} can be viewed as miss-type characterization of $\tau_{\mathcal{S},d}^+$-convergence.

	It is worth mentioning that in Theorem \ref{Gapconvergence} the family $\mathcal{S}$ is not assumed to be a bornology. However, Theorem \ref{Miss theorem for S topology} need not be true for an arbitrary family $\mathcal{S} \subseteq \mathcal{P}_0(X)$ (see the next example). So it may be interesting to study the set convergence given by Theorem \ref{Miss theorem for S topology} $(i)$, specially in the context of normed linear spaces where one usually encounters families of subsets which do not form a bornology.  
%	\begin{figure}
%		\centering
%		
%	\includegraphics[width= 14 cm, height = 7 cm]{pure1.jpg}
%	\caption{Graphical Representation of Example \ref{counterexanple_Tsd}}
%	\end{figure}
	 \begin{example}\label{counterexanple_Tsd}
		Let $(X,d) = (\mathbb{R}^2, d_e)$.  Define $A_n = \{(x,0): x \leq 0\} \cup \{(x,y): y = 1- \frac{x}{n}, x > 0\}$ for each $n \in \mathbb{N}$, and $A = \{(x,0) : x \leq 0 \} \cup \{(x,y) : x > 0, y = 1\}$. Then $(A_n)$ is a sequence in $CL(X)$ and $A \in CL(X)$. Consider $\mathcal{S} = \{\{(x,0): x \in \mathbb{R}\}\} \cup \mathcal{F}(\mathbb{R}^2)$. Then it is not hard to verify that the sequence $(A_n)$ and $A$ satisfy the convergence given in  Theorem \ref{Miss theorem for S topology} $(i)$. However, the $\tau_{\mathcal{S},d}^+$-convergence of sequence $(A_n)$ to $A$ fails. As for $S = \{(x,0): x \in \mathbb{R}\}$ and $\epsilon = \frac{1}{2}$, we have for any $n_0 \in \mathbb{N}$, $d((n_0,0) , A) - d((n_0,0), A_{n_0}) = 1 > \epsilon$. \qed 
		\end{example}
	%To see this, suppose the sequence $(A_n)$ $\tau_{\mathcal{S},d}^+$-converges to $A$. Then for $S = \{(x,0): x \in \mathbb{R}\}$, and $\epsilon = 1$, we can choose an $n_0 \in \mathbb{N}$ such that $d((x,0), A) - d((x,0), A_n) < 1$ for $n \geq n_0$, $\forall ~ x \in \mathbb{R}$.  In particular, for $x > 2n_0$, $d((x,0) , A) - d((x,0), A_{n_0}) = 1 - (1 - \frac{x}{n_0}) = \frac{x}{n_0} \nleq 1$. We reach at a contradiction.
%Example \ref{counterexanple_Tsd} is illustrated in Figure $1$.

%One may also infer from Example $5.25$ that the convergence defined in $(i)$ of Theorem $3.1$  with the operator $\Sigma$ while   for an arbitrary family $\mathcal{S} \subseteq \mathcal{P}(X)$ may be strictly weaker than the convergence defined in $(i)$ of Theorem $3.1$ for family $\downarrow\mathcal{S} = \{T \subseteq X: \exists S \in \mathcal{S}~ \text{with}~ S \subseteq T\}$. (\textit{needed to be rewrite}) 

%We now use Theorem \ref{Miss theorem for S topology} to infer a known result about the coincidence of the $\tau_{AW_d}^+$-convergence and upper bounded proximal convergence (see, Theorem... in \cite{ToCCoS}). % However, we give a more straightforward proof in the vein of our investigation.

	The following corollaries shows that for bornologies such as $\mathcal{B}_d(X)$ and $\mathcal{F}(X)$, in Theorem \ref{Miss theorem for S topology} $(i)$, it is enough to consider enlargements by positive constants rather than by members of $\mathcal{Z}^+$, that is, for $\mathcal{S} = \mathcal{B}_d(X)$ or $\mathcal{F}(X)$, we have $\tau_{\mathcal{S},d}^{+} = \mathsf{G}_{\mathcal{S},d}^{+}$ (thanks to Theorem \ref{Gapconvergence}). In the next section, we study the coincidence $\tau_{\mathcal{S},d}^{+} = \mathsf{G}_{\mathcal{S},d}^{+}$ for an arbitrary bornology $\mathcal{S}$.   
	\begin{proposition}\label{Bounded set}
		Let $(X,d)$ be a metric space and let $C$ be a bounded subset of $X$. If $f \in \mathcal{Z}^+$, then $B_d(C,f)$ is either bounded or $X$. 
	\end{proposition}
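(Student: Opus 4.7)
The plan is to perform a dichotomy based on whether $f$ is bounded on $C$ or not. Since $C$ is bounded, fix any basepoint $x_0 \in X$ and put $M = \sup_{x \in C} d(x, x_0) < \infty$.

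First I would handle the easy case where $f$ is bounded on $C$, say $f(x) \leq M'$ for all $x \in C$. Then any point $y \in B_d(C, f)$ belongs to some ball $B_d(x, f(x))$ with $x \in C$, and the triangle inequality gives $d(y, x_0) \leq d(y, x) + d(x, x_0) < f(x) + M \leq M' + M$. Hence $B_d(C, f) \subseteq B_d(x_0, M + M')$ is bounded.

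The interesting case is when $f$ is unbounded on $C$; here I would show $B_d(C, f) = X$. Given an arbitrary $y \in X$, I would pick $x \in C$ with $f(x) > d(x_0, y) + M$, which is possible precisely because $f$ is unbounded on $C$. Then $d(x, y) \leq d(x, x_0) + d(x_0, y) \leq M + d(x_0, y) < f(x)$, so $y \in B_d(x, f(x)) \subseteq B_d(C, f)$. Since $y$ was arbitrary, $B_d(C, f) = X$.

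There is no real obstacle; the proof is a clean two-line case split once one recognizes that the only way for the $f$-enlargement of a bounded set to escape boundedness is for $f$ itself to blow up on $C$, and in that case the enlargement already swallows every point of $X$.
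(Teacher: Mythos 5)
Your proof is correct and follows essentially the same route as the paper: a dichotomy on whether $f$ is bounded on $C$, with the bounded case handled by containment in a single large ball and the unbounded case by the triangle inequality. The only cosmetic difference is that the paper argues the unbounded case by contradiction whereas you argue it directly, which is arguably cleaner.
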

	
	\begin{proof}
		If $\sup\{f(x) : x \in C\} = r < \infty$, then $B_d(C,f) \subseteq B_d(C,r)$.  
		Otherwise, we show that $B_d(C,f) = X$. If there is a $z \in X \setminus B_d(C,f)$, then for any $x \in C$, $d(z, x) \geq f(x)$. Given $C$ is bounded, $C \subseteq B_d(x_0,r)$ for some $x_0 \in X$ and $r > 0$. Choose $n \in \mathbb{N}$ such that $d(x_0, z) < nr$.  Since the set $\{f(x): x \in C\}$ is not bounded above, we can choose $x' \in C$ such that $f(x') > (n + 2)r$. Consequently, $d(x', z) \leq d(x', x_0) + d(x_0, z) < (n+ 2)r < f(x')$. We arrive at a contradiction. 
		\end{proof}

	\begin{corollary}\label{awgap}
		Let $(X,d)$ be a metric space. Suppose $(A_\lambda)$ is a net in $CL(X)$ and $A \in CL(X)$. Then the following conditions are equivalent:
		\begin{enumerate}[(i)]
			\item for $S \in \mathcal{B}_d(X)$ and $0 < \alpha < \epsilon$, whenever $A \cap B_d(S, \epsilon) = \emptyset$, then $A_\lambda \cap B_d(S, \alpha) = \emptyset$ eventually;
			
			\item for $S \in \mathcal{B}_d(X)$ and $f, g \in \mathcal{Z}^+$ with $\inf \{g(x) - f(x) : x \in S\} > 0$, whenever $A \cap B_d(S,g) = \emptyset$, then $A_\lambda \cap B_d(S,f) = \emptyset$ eventually; 
			
			\item $(A_\lambda)$ is $\tau_{AW_d}^+$-convergent to $A$.
			
			%\item $(A_\lambda)$ is upper bounded proximal, that is, $\mathsf{G}_{\mathcal{B}_d(X),d}^+$-convergent to $A$;
			
			%\item $(A_\lambda)_{\lambda \in \Lambda}$ is $\mathcal{B}_d(X)^+$-convergent to $A$.
		\end{enumerate}
	\end{corollary}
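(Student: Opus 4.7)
The plan is to combine Theorem \ref{Miss theorem for S topology} with Proposition \ref{Bounded set} to reduce the function-enlargement criterion of (ii) to the constant-enlargement criterion of (i). Two of the three implications are essentially immediate: applying Theorem \ref{Miss theorem for S topology} to the bornology $\mathcal{B}_d(X)$ yields (ii) $\Leftrightarrow$ (iii), since $\tau_{\mathcal{B}_d(X),d}^+ = \tau_{AW_d}^+$ by definition. Specialising (ii) to the constant functions $f \equiv \alpha$ and $g \equiv \epsilon$, which satisfy $\inf_{x \in S}(g(x)-f(x)) = \epsilon - \alpha > 0$, gives (ii) $\Rightarrow$ (i).

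The substantive direction is (i) $\Rightarrow$ (ii). Fix $S \in \mathcal{B}_d(X)$, $f, g \in \mathcal{Z}^+$ with $r := \inf_{x \in S}\bigl(g(x) - f(x)\bigr) > 0$, and suppose $A \cap B_d(S,g) = \emptyset$. The idea is to replace the function-enlargement $B_d(S,f)$ by a single bounded set, to which (i) can be applied. Set $T = B_d(S,f)$. Since $f < g$ on $S$ gives $T \subseteq B_d(S,g)$, we obtain $A \cap T = \emptyset$; because $A$ is nonempty, $T \neq X$. Proposition \ref{Bounded set} then forces $T$ to be bounded, so $T \in \mathcal{B}_d(X)$. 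A short triangle-inequality buffer using $g(x) > f(x) + r$ shows $B_d(T, r/2) \subseteq B_d(S,g)$, hence $A \cap B_d(T, r/2) = \emptyset$. Applying (i) to $T$ with $\epsilon = r/2$ and any $\alpha \in (0, r/2)$ yields $A_\lambda \cap B_d(T,\alpha) = \emptyset$ eventually, and since $B_d(S,f) = T \subseteq B_d(T,\alpha)$, we conclude $A_\lambda \cap B_d(S,f) = \emptyset$ eventually, establishing (ii).

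The main obstacle is exactly the reduction step just described: without Proposition \ref{Bounded set} there is no a priori reason why the function-enlargement of a bounded set should sit inside a bounded set (a positive function on $S$ can in principle blow up and swell $B_d(S,f)$ to all of $X$). It is precisely the hypothesis $A \cap B_d(S,g) = \emptyset$, combined with the dichotomy from Proposition \ref{Bounded set}, that rules out the unbounded alternative and lets the argument proceed with the single bounded set $T$ rather than with a family of enlargements of varying radii.
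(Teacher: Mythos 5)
Your proof is correct and follows essentially the same route as the paper's: both reduce (i)~$\Rightarrow$~(ii) by forming the single bounded set $T=B_d(S,f)$ via Proposition \ref{Bounded set} (using that $T\neq X$ since $A$ is nonempty and disjoint from $B_d(S,g)$), buffering it inside $B_d(S,g)$, and applying (i) to $T$; the remaining implications are handled identically via Theorem \ref{Miss theorem for S topology} and specialisation to constant functions. Your write-up in fact makes explicit the step (why the unbounded alternative in Proposition \ref{Bounded set} is excluded) that the paper leaves implicit.
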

	
	\begin{proof}$(i) \Rightarrow (ii)$.
		Let $S \in \mathcal{B}_d(X)$ and $ f, g \in \mathcal{Z}^+$ with $ \inf\{g(x) - f(x) : x \in S\} = r > 0$, satisfying, $A \cap B_d(S,g) = \emptyset$. By Proposition \ref{Bounded set}, it follows that $S' = B_d(S,f) \in \mathcal{B}_d(X)$. Since $B_d(S',r) \subseteq B_d(S, g)$, we have $A \cap B_d(S', r) = \emptyset$. Thus, by the hypothesis for $0 < \alpha < r$ there is a $\lambda_0$ such that for all $\lambda \geq \lambda_0$, we have $A_\lambda \cap B_d(S', \alpha) = \emptyset$.
		Hence $(ii)$ is established. 
		
		The implication $(ii) \Rightarrow(i)$ is immediate, and	$(ii) \Leftrightarrow (iii)$ follows from Theorem \ref{Miss theorem for S topology}.	
		%$(iii) \Rightarrow (i)$. It follows from  Proposition \ref{finerthangap} and Theorem \ref{Gapconvergence}.
		% and $(iii) \Leftrightarrow (v)$: It follows from Proposition $3.1.6$ of \cite{ToCCoS}; $(iv)\Leftrightarrow (i)$: It can be concluded from Theorem \ref{Gapconvergence}. 
	\end{proof}
\begin{remark}
	It is well-known that the $\mathsf{G}_{\mathcal{B}_d(X),d}^+$ coincides with the upper bounded proximal topology (see, p.111 of \cite{ToCCoS}). So by Corollary \ref{awgap} and Theorem \ref{Gapconvergence}, the upper bounded proximal topology and upper Attouch-Wets topology coincides on $CL(X)$. 
\end{remark}
	\begin{corollary}\label{wijsman equivalence} $($Lemma $2.1.2$, \cite{ToCCoS}$)$
	Let $(X,d)$ be a metric space. Suppose $(A_\lambda)$ is a net in $CL(X)$ and $A \in CL(X)$. Then the following assertions are equivalent:
	\begin{enumerate}[(i)]
		\item $(A_\lambda)$ is $\tau_{W_{d}}^+$-convergent to $A \in CL(X)$;
		
		\item for $0 < \alpha < \epsilon$, whenever $A \cap B_d(x,\epsilon) = \emptyset$, then $A_{\lambda} \cap B_d(x,\alpha) = \emptyset$ eventually.  
	\end{enumerate}
\end{corollary}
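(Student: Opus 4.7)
The strategy is to view this corollary as the specialization of Theorem \ref{Miss theorem for S topology} to the bornology $\mathcal{S} = \mathcal{F}(X)$, after observing that for such a bornology the $f$-enlargement of a finite set decomposes as a finite union of balls. In other words, I would argue that the ``positive function'' enlargements appearing in Theorem \ref{Miss theorem for S topology} carry no extra strength beyond scalar-radius enlargements when the base set is a single point, and then use closure of $\mathcal{F}(X)$ under finite unions to pass between singletons and finite sets. Note also that since finite sets are bounded, Proposition \ref{Bounded set} applies, but in fact we do not need the full force of it here because singletons trivialize the construction.

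For the implication $(i) \Rightarrow (ii)$, I would start from a point $x \in X$ and constants $0 < \alpha < \epsilon$ with $A \cap B_d(x, \epsilon) = \emptyset$. Setting $S = \{x\} \in \mathcal{F}(X)$, I would pick any $f, g \in \mathcal{Z}^+$ whose values at $x$ are $\alpha$ and $\epsilon$ respectively (say, constant functions $f \equiv \alpha$ and $g \equiv \epsilon$). Then $\inf_{y \in S}(g(y) - f(y)) = \epsilon - \alpha > 0$ and $A \cap B_d(S, g) = A \cap B_d(x, \epsilon) = \emptyset$, so Theorem \ref{Miss theorem for S topology} applied to $(A_\lambda)$ delivers $A_\lambda \cap B_d(x, \alpha) = A_\lambda \cap B_d(S, f) = \emptyset$ eventually.

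For $(ii) \Rightarrow (i)$, I would invoke Theorem \ref{Miss theorem for S topology} in the reverse direction: take any $S = \{x_1, \ldots, x_n\} \in \mathcal{F}(X)$ and $f, g \in \mathcal{Z}^+$ with $\inf_{y \in S}(g(y) - f(y)) = r > 0$ and $A \cap B_d(S, g) = \emptyset$. Since $B_d(S, g) = \bigcup_{i=1}^n B_d(x_i, g(x_i))$, the hypothesis gives $A \cap B_d(x_i, g(x_i)) = \emptyset$ for each $i$, and since $f(x_i) < g(x_i)$, condition $(ii)$ yields some $\lambda_i$ with $A_\lambda \cap B_d(x_i, f(x_i)) = \emptyset$ for $\lambda \geq \lambda_i$. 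Choosing $\lambda_0$ to dominate all $\lambda_i$ (using that the indexing $i$ ranges over a finite set) then gives $A_\lambda \cap B_d(S, f) = \emptyset$ eventually, as required.

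There is no real obstacle here; the only subtlety is the quantifier management in the second direction, where one must remember that finiteness of $S$ is what allows a single eventual index $\lambda_0$ to work simultaneously for all $x_i$. Everything else is a direct translation from Theorem \ref{Miss theorem for S topology}, and the proof should be only a few lines once the reduction is articulated.
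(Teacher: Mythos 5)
Your proposal is correct and follows the route the paper intends: the corollary is presented as an immediate specialization of Theorem \ref{Miss theorem for S topology} to the bornology $\mathcal{F}(X)$ (for which $\tau_{\mathcal{F}(X),d}^+=\tau_{W_d}^+$), with the key observation that an $f$-enlargement of a finite set is a finite union of balls, exactly parallel to how Corollary \ref{awgap} handles $\mathcal{B}_d(X)$. The quantifier management you flag in the second direction (choosing one index dominating finitely many) is handled correctly and is the only point of care.
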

	
	\section{Coincidence of $\tau_{\mathcal{S},d}^+$ and $\mathsf{G}_{\mathcal{S},d}^+$}
	
	We have seen earlier that $\tau_{\mathcal{S},d}^+$-convergence is stronger than the $\mathsf{G}_{\mathcal{S},d}^+$-convergence in general (see, Proposition \ref{finerthangap}). In this section, we explore the situations for the equivalence of $\tau_{\mathcal{S},d}^+$-convergence and $\mathsf{G}_{\mathcal{S},d}^+$-convergence on $CL(X)$. The following result gives a sufficient condition for the coincidence of these convergences.

%	**\textit{It is to be noted that whenever $f, g \in \mathcal{Z}^+$ with $\inf\{g(x) - f(x): x \in S\} > 0$ then one can show that $\overline{B_d(S,f)} \subseteq B_d(S,g)$. }	
	\begin{theorem}\label{suffconditiondistnceandgap}
		Let $(X,d)$ be a metric space and let $\mathcal{S}$ be a bornology on $X$. If for each $S \in \mathcal{S}$ and $f \in \mathcal{Z}^+$ either $B_d(S,f) \in \mathcal{S}$ or $\overline{B_d(S,f)} = X$, then $\tau_{\mathcal{S},d}^+ = \mathsf{G}_{\mathcal{S},d}^+$ on $CL(X)$.  
	\end{theorem}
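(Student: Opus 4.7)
The plan is to combine the miss-type characterization of $\tau_{\mathcal{S},d}^+$-convergence from Theorem \ref{Miss theorem for S topology} with the analogous criterion for $\mathsf{G}_{\mathcal{S},d}^+$-convergence from Theorem \ref{Gapconvergence}. Since Proposition \ref{finerthangap} already gives $\mathsf{G}_{\mathcal{S},d}^+ \subseteq \tau_{\mathcal{S},d}^+$ on $CL(X)$, only the reverse inclusion requires work. I would therefore take a net $(A_\lambda)$ in $CL(X)$ that is $\mathsf{G}_{\mathcal{S},d}^+$-convergent to $A\in CL(X)$ and try to verify condition $(i)$ of Theorem \ref{Miss theorem for S topology}: given $S \in \mathcal{S}$ and $f,g \in \mathcal{Z}^+$ with $r := \inf_{x \in S}(g(x)-f(x)) > 0$ and $A \cap B_d(S,g) = \emptyset$, show that $A_\lambda \cap B_d(S,f) = \emptyset$ eventually.

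The hypothesis supplies a dichotomy on $B_d(S,f)$. In the main case $S' := B_d(S,f) \in \mathcal{S}$, a direct triangle inequality argument yields $B_d(S',r) \subseteq B_d(S,g)$: a point within distance $r$ of some $z \in B_d(s,f(s))$ lies within distance $f(s)+r \leq g(s)$ of $s \in S$. Hence $A \cap B_d(S',r) = \emptyset$, so Theorem \ref{Gapconvergence} applied to $S'\in \mathcal{S}$ (with, say, $\alpha = r/2 < r$) yields $A_\lambda \cap B_d(S', r/2) = \emptyset$ eventually. Since $B_d(S,f) = S' \subseteq B_d(S', r/2)$, the desired conclusion $A_\lambda \cap B_d(S,f) = \emptyset$ follows eventually.

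The step requiring care is the alternative branch $\overline{B_d(S,f)} = X$, and the point I would isolate is that this branch is in fact \emph{vacuous} under the standing assumption that $A \in CL(X)$ is disjoint from $B_d(S,g)$. Indeed, every $y \in X$ is within distance $r$ of some $z \in B_d(S,f)$, and writing $z \in B_d(s,f(s))$ with $s \in S$ gives $d(y,s) < f(s)+r \leq g(s)$. Thus $B_d(S,g) = X$, forcing $A = \emptyset$, a contradiction. Consequently only the first branch actually arises, and the argument above completes the verification.

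The main conceptual obstacle, I expect, is recognizing why this specific dichotomy is the natural one: the bornology-preserving alternative $B_d(S,f) \in \mathcal{S}$ is precisely what lets one invoke the gap-functional criterion on a larger member of $\mathcal{S}$, while the density alternative $\overline{B_d(S,f)} = X$, when combined with the \emph{uniform} gap $r>0$ between $f$ and $g$, collapses $B_d(S,g)$ to the whole space and so rules out the hypothesis of the miss condition. Once this interplay is seen, the rest is a clean triangle-inequality computation followed by a single application of Theorem \ref{Gapconvergence}.
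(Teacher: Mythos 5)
Your proposal is correct and takes essentially the same route as the paper: the paper also reduces to verifying condition $(i)$ of Theorem \ref{Miss theorem for S topology}, notes that $\overline{B_d(S,f)} \subseteq B_d(S,g) \neq X$ rules out the second branch of the dichotomy (your ``vacuous branch'' observation, phrased contrapositively), and then applies Theorem \ref{Gapconvergence} to $S' = B_d(S,f) \in \mathcal{S}$ with a small enlargement inside $B_d(S,g)$. The only differences are cosmetic (you use $B_d(S',r)$ where the paper uses $B_d(S',r/2)$).
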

	
	\begin{proof}
		Suppose $(A_{\lambda})$ is a net that $\mathsf{G}_{\mathcal{S},d}^+$-converges to $A$ in $CL(X)$. Pick $S \in \mathcal{S}$ and $f, g \in \mathcal{Z}^+$ with $\inf\{g(x)-f(x):x \in S\} > 0$ such that $A \cap B_d(S,g) = \emptyset$. Since $\overline{B_d(S,f)} \subseteq B_d(S,g)$, we have $B_d(S,f) \in \mathcal{S}$. Set $B_d(S,f) = S'$, and $r = \inf\{g(x) - f(x): x \in S\}$. Then $B_d(S', \frac{r}{2}) \subseteq B_d(S, g)$. Consequently, $A \cap B_d(S', \frac{r}{2}) = \emptyset$. So $A_{\lambda} \cap S' = \emptyset$ eventually as $(A_{\lambda})$ is $\mathsf{G}_{\mathcal{S},d}^+$-convergent to $A$.  Therefore, $A_{\lambda} \cap B_d(S,f) = \emptyset$ eventually. Thus, by Theorem \ref{Miss theorem for S topology}, $(A_{\lambda})$ is $\tau_{\mathcal{S},d}^+$-convergent to $A$.   	\end{proof}
	
	The converse of Theorem \ref{suffconditiondistnceandgap} need not be true in general. Consider $(\mathbb{R}, d_u)$ and take $\mathcal{S} = \mathcal{F}(X)$. Then $\tau_{\mathcal{S},d}^+ = \mathsf{G}_{\mathcal{S},d}^+ = \tau_{W_{d}}^+$ while the condition given in Theorem \ref{suffconditiondistnceandgap} fails.  
	
	%Next, we characterize the coincidence of $\tau_{\mathcal{S},d}^+$-convergence and $\mathsf{G}_{\mathcal{S},d}^+$-convergence on $CL(X)$.	

	In \cite{beer2013gap}, the authors introduced the notion of strictly $(\mathcal{S}-d)$ included to study the coincidence  $\mathsf{G}_{\mathcal{S},d}^+ = \mathsf{G}_{\mathcal{S},\rho}^+$ for two metrics $d, \rho$, and the coincidence $\mathsf{G}_{\mathcal{S},d}^+ = \mathsf{G}_{\mathcal{T},d}^+$  for two families $\mathcal{S}, \mathcal{T}$ of subsets of $X$. This notion is also helpful in studying the coincidence $\tau_{\mathcal{S},d}^+ = \mathsf{G}_{\mathcal{S},d}^+$.
	
	% This notion concerns the inclusion of a subset in another subset of $X$ for an arbitrary family $\mathcal{S}$ and metric $d$ on $X$.  
	
	\begin{definition}\normalfont
		Let $(X,d)$ be a metric space and let $\mathcal{S}$ be an arbitrary family of nonempty subsets of $X$. We say a subset $A$ of $X$ is \textit{strictly $(\mathcal{S}-d)$ included} in another nonempty subset $C$ of $X$ if there exists a finite subset $\{S_1,\ldots,S_n\}$ of $\mathcal{S}$, and for every $i \in \{1,\ldots,n\}$ there are $\alpha_i, \epsilon_i$ with $0 < \alpha_i < \epsilon_i$, such that $$A \subseteq \cup_{i=1}^nB_d(S_i, \alpha_i)  \subseteq \cup_{i=1}^nB_d(S_i, \epsilon_i) \subseteq C. $$
	\end{definition}
	The above definition is a generalization of the notion of \textit{strictly $d$-included} introduced by C. Costantini et al. in \cite{costantini1993metrics} while studying when two equivalent metrics on $X$ determine the same Wijsman convergence on $CL(X)$. %Note that in the above definition, we cannot replace the finite subset of $\mathcal{S}$ by a singleton subset even if $\mathcal{S}$ is a bornology. 
	
	\begin{proposition}\label{Strictly included sets}
		Let $(X,d)$ be a metric space and let $\mathcal{S}$ be a bornology on $X$. Suppose $A$ and $B$ are two nonempty subsets of $X$. Consider the following statements:
		
		\begin{enumerate}[(i)]
			
			\item  $A \in \mathcal{S}$ and $D_d(A,B) > 0$ ;

			\item  $A$ is strictly $(\mathcal{S}-d)$ included in $X \setminus B$. 		
			
		\end{enumerate}  
		Then $(i) \Rightarrow (ii)$ holds. 
	\end{proposition}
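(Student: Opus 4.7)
The plan is to use the simplest possible instance of the definition of strictly $(\mathcal{S}-d)$ included, namely the one with $n=1$ and $S_1 = A$ itself; this is legitimate precisely because we are given $A \in \mathcal{S}$. Set $r = D_d(A,B)$, which is strictly positive by hypothesis, and choose the two radii $\alpha_1 = r/3$ and $\epsilon_1 = r/2$, so that $0 < \alpha_1 < \epsilon_1$.

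First I would verify the inner two inclusions, $A \subseteq B_d(A, \alpha_1) \subseteq B_d(A, \epsilon_1)$, which are immediate from the definition of the $\alpha$-enlargement and the inequality $\alpha_1 < \epsilon_1$. The only real content is the outer inclusion $B_d(A, \epsilon_1) \subseteq X \setminus B$, equivalently $B_d(A, r/2) \cap B = \emptyset$. I would argue this by contradiction: if some $x \in B_d(A, r/2) \cap B$, then there exists $a \in A$ with $d(a,x) < r/2$, whereas $x \in B$ forces $d(a,x) \geq D_d(A,B) = r$, a contradiction.

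There is no serious obstacle here; the lemma is essentially a repackaging of the hypothesis $D_d(A,B) > 0$ into the language of $(\mathcal{S}-d)$ inclusion, and the only thing one must notice is that the single set $A$ itself is available as a member of $\mathcal{S}$, so no decomposition into several members of the bornology is needed.
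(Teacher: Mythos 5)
Your proof is correct and is essentially identical to the paper's: both take the single set $A\in\mathcal{S}$ with radii $r/3 < r/2 < r = D_d(A,B)$ and check that $B_d(A,r/2)$ misses $B$. You have merely written out the triangle-inequality-free verification that the paper leaves implicit.
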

	
	\begin{proof}
		Let $D_d(A,B) = \epsilon$. Since $A \in \mathcal{S}$, $A \subseteq B_d(A, \frac{\epsilon}{3}) \subseteq B_d(A, \frac{\epsilon}{2}) \subseteq X \setminus B$.  
	\end{proof}

	To prove our main result of this section, we first fix some notations and prove a preliminary lemma.
	Let $\mathcal{S}$ be bornology on a metric space $(X,d)$. Denote by $\mathcal{F}(\mathcal{S})$ the family of all nonempty finite subsets of $\mathcal{S}$, and $\mathcal{F}(\mathbb{R}^+)$ for the family of all nonempty finite subsets of $\mathbb{R}^+ = (0,\infty)$.
	For any $F\in \mathcal{F}(\mathcal{S})$ choose $\alpha_F = \{\alpha_S: S\in F\} \in \mathcal{F}(\mathbb{R}^+)$. Let $(F, \alpha_F) = \{(S, \alpha_S): S\in F\}$. Consider the collection $$\mathcal{N} = \{(F, \alpha_F) : (F, \alpha_F) \in \mathcal{F}(\mathcal{S}) \times \mathcal{F}(\mathbb{R}^+)\}. $$ 
	  
	Define a relation $\leqslant$ on $\mathcal{N}$ by $(F, \alpha_F) \leqslant (F', \alpha_{F'})$ if and only if $\cup_{S\in F}B_d(S, \alpha_S) \subseteq \cup_{S' \in F'}B_d(S', \alpha_{S'})$.
%	 Also, define a binary operation $\star$ on $\mathcal{N}$, $\star: \mathcal{N} \times \mathcal{N} \rightarrow \mathcal{N}$ given as,
%		$$(F, \alpha_F) \star (F', \alpha_{F'}) = (F, \alpha_F) \cup (F', \alpha_{F'}).$$
	
	Set $F'' = F \cup F'$ and $\alpha_{F''} = \alpha_F\cup\alpha_{F'} = \{\alpha_{S''}: S'' \in F''\}$. Then $(F'', \alpha_{F''}) \in \mathcal{N}$. Moreover, $(F, \alpha_F) \leqslant (F'', \alpha_{F''})$, and $(F', \alpha_{F'}) \leqslant (F'', \alpha_{F''})$. Observe that the relation $\leqslant$ is reflexive and transitive on $\mathcal{N}$. Thus, $(\mathcal{N}, \leqslant)$ is a directed set.

\begin{lemma}\label{Directed set}
	Let $C$ be a nonempty open subset of a metric space $(X,d)$ and let $\mathcal{S}$ be a bornology on $X$. Then the collection $\Omega \subseteq \mathcal{N}$ defined as, 
	$$ \Omega = \left\{(F, \alpha_F) : \exists~ \epsilon_{F} \in \mathcal{F}(\mathbb{R}^+)~ \text{with}~ \alpha_S < \epsilon_S \text{ for } S \in F \text{ and }  \bigcup_{S \in F}B_d(S, \epsilon_S) \subseteq C \right\},$$
	is a directed set under the relation $\leqslant$ defined above. 
	
\end{lemma}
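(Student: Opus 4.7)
The plan is to verify directedness directly: given two elements of $\Omega$, construct a common upper bound in $\Omega$ with respect to $\leqslant$. Since the author has already observed that $\leqslant$ is reflexive and transitive on $\mathcal{N}$, and that the union construction $F'' = F \cup F'$ produces an upper bound in $\mathcal{N}$, the content of the lemma is to check that we may carry out the union construction while remaining inside $\Omega$.

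I would start with $(F, \alpha_F), (F', \alpha_{F'}) \in \Omega$ and, by definition, obtain $\epsilon_F = \{\epsilon_S : S \in F\}$ and $\epsilon_{F'} = \{\epsilon'_S : S \in F'\}$ with $\alpha_S < \epsilon_S$ for all $S \in F$, $\alpha'_S < \epsilon'_S$ for all $S \in F'$, and both $\bigcup_{S \in F} B_d(S, \epsilon_S) \subseteq C$ and $\bigcup_{S \in F'} B_d(S, \epsilon'_S) \subseteq C$. Then I would set $F'' := F \cup F'$ and, for each $S \in F''$, define $\alpha''_S$ and $\epsilon''_S$ by taking the original value when $S$ lies in only one of $F, F'$, and by taking $\alpha''_S := \max\{\alpha_S, \alpha'_S\}$ and $\epsilon''_S := \max\{\epsilon_S, \epsilon'_S\}$ when $S \in F \cap F'$.

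Next I would check the three required properties. First, $\alpha''_S < \epsilon''_S$ for each $S \in F''$: the only nontrivial case is $S \in F \cap F'$, where (say) $\alpha_S \geq \alpha'_S$ gives $\alpha''_S = \alpha_S < \epsilon_S \leq \max\{\epsilon_S, \epsilon'_S\} = \epsilon''_S$, with the symmetric case analogous. Second, $\bigcup_{S \in F''} B_d(S, \epsilon''_S) \subseteq C$: each ball $B_d(S, \epsilon''_S)$ coincides with either $B_d(S, \epsilon_S)$ or $B_d(S, \epsilon'_S)$, both of which lie in $C$ by hypothesis. This gives $(F'', \alpha_{F''}) \in \Omega$. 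Third, to see $(F, \alpha_F) \leqslant (F'', \alpha_{F''})$, observe that for every $S \in F$ we have $\alpha''_S \geq \alpha_S$, so $B_d(S, \alpha_S) \subseteq B_d(S, \alpha''_S)$; taking unions over $S \in F$ yields $\bigcup_{S \in F} B_d(S, \alpha_S) \subseteq \bigcup_{S \in F''} B_d(S, \alpha''_S)$, and the argument for $(F', \alpha_{F'}) \leqslant (F'', \alpha_{F''})$ is symmetric.

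There is no real obstacle here; the only point requiring care is the ambiguity of the notation $\alpha_F \cup \alpha_{F'}$ when $F \cap F' \neq \emptyset$, since the same $S$ may be assigned different values by $\alpha_F$ and $\alpha_{F'}$. Taking the pointwise maximum of the $\alpha$-values (and of the $\epsilon$-values) on the overlap is the natural resolution and is what makes all three verifications fall out cleanly.
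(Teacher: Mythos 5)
Your argument follows the same route as the paper's own proof --- form $F'' = F \cup F'$, combine the two $\epsilon$-families, and check that the result stays in $\Omega$ --- and is in fact slightly more careful, since the paper's notation $\alpha_{F''} = \alpha_F \cup \alpha_{F'}$ leaves the case $S \in F \cap F'$ ambiguous, an ambiguity your pointwise maximum resolves cleanly. The only small omission is that you never note that $\Omega$ is nonempty (which the usual definition of a directed set requires, and which the paper obtains from $C$ being nonempty open together with $\mathcal{S}$ covering $X$).
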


\begin{proof}Note that $\Omega$ is nonempty as $C$ is nonempty open and $\mathcal{S}$ is a bornology.
	Since $\Omega \subseteq \mathcal{N}$, the relation $\leqslant$ is reflexive as well as transitive on $\Omega$. It  remains to show that for any $(F, \alpha_F), (F', \alpha_{F'}) \in \Omega$ there exists $(F'', \alpha_{F''}) \in \Omega$ satisfying $(F, \alpha_F) \leqslant (F'', \alpha_{F''})$, and $(F', \alpha_{F'}) \leqslant (F'', \alpha_{F''})$. Take $(F'', \alpha_{F''})$ as above. By the above argument, we just need to show that $(F'', \alpha_{F''}) \in \Omega$.  
	
Choose $\epsilon_F$, $\epsilon_{F'} \in \mathcal{F}(\mathbb{R}^+)$ satisfying the definition of $\Omega$ corresponding to $(F,\alpha_F)$ and $(F',\alpha_{F'})$ respectively. Let $\epsilon_{F''} = \epsilon_F \cup \epsilon_{F'}$. Then, $ \alpha_{S''} < \epsilon_{S''}$ for $S''\in F''$, and $\cup_{S'' \in F''}B_d(S'', \epsilon_{S''}) \subseteq C$. Therefore, $(\Omega, \leq)$ is a directed set.\end{proof}

%\begin{lemma}
%		Let $(X,d)$ be a metric space and let $\mathcal{S}$ be a collection of nonempty subsets of $X$. Then the binary relation $\leq$ on $\mathcal{N}$ defines a direction on $\mathcal{N}$.  
%	\end{lemma}
%	\begin{proof}
%		It is easy to see that the binary relation $\leq$ on $\mathcal{N}$ is reflexive as well as transitive. We only need to prove that for $(F, \epsilon_F), (F', \epsilon_{F'}) \in \mathcal{N}$ there exists a $(F'', \epsilon_{F''}) \in \mathcal{N}$ such that $(F, \epsilon_F) \leq (F'', \epsilon_{F''})$ and $(F', \epsilon_{F'}) \leq (F'', \epsilon_{F''})$. Let $F'' = F \cup F'$ and $\epsilon_{F''} = \epsilon_{F} \cup \epsilon_{F'}$. So, $$(F'', \epsilon_{F''}) = \{(S_i, \epsilon_{S_{i}}) : 1 \leq i \leq |F|\} \cup \{ (S'_{j} \epsilon_{S'_{j}}): 1 \leq j \leq |F'|\} \in \mathcal{N}.$$ Also,
%		$$	\bigcup_{(S, \epsilon_S)\in (F, \epsilon_F)}B_d(S, \epsilon_S)~ \mbox{\large$\subseteq$} \bigcup_{(S'', \epsilon_S'')\in (F'', \epsilon_{F''})}B_d(S'', \epsilon_{S''})$$ and, 
%		$$\bigcup_{(S', \epsilon_S')\in (F', \epsilon_F')}B_d(S', \epsilon_S')~ \mbox{\large$\subseteq$} \bigcup_{(S'', \epsilon_S'')\in (F'', \epsilon_{F''})}B_d(S'', \epsilon_{S''}).$$ Hence, $(\mathcal{N}, \leq)$ is a directed set.  
%		
%	\end{proof}
%	We now come to the main theorem of this section which concerns for the coincidence of $\tau_{\mathcal{S},d}^+$ and $\mathsf{G}_{\mathcal{S},d}^+$ topologies. 
	
			\begin{theorem}\label{gap and Stopology equivaalence}
			Let $(X,d)$ be a metric space and let $\mathcal{S}$ be a bornology on $X$. Then the following statements are equivalent:
			\begin{enumerate}[(i)]
				\item $\tau_{\mathcal{S},d}^+ = \mathsf{G}_{\mathcal{S}, d}^+$ on $CL(X)$;
				
				\item for each $S \in \mathcal{S}$ and $f, g \in \mathcal{Z}^+$ with $\inf\{g(x) - f(x) : x \in S\} > 0$, either $B_d(S , f)$ is strictly $(\mathcal{S}-d)$ included in $B_d(S, g)$ or $B_d(S , g) = X$.  
			\end{enumerate}
		\end{theorem}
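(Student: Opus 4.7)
The plan is to combine the miss-characterization of $\tau_{\mathcal{S},d}^+$-convergence (Theorem \ref{Miss theorem for S topology}) with the characterization of $\mathsf{G}_{\mathcal{S},d}^+$-convergence (Theorem \ref{Gapconvergence}), and to exploit the directed set $\Omega$ of Lemma \ref{Directed set} to build a counterexample net for the reverse direction.

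For $(ii)\Rightarrow(i)$, by Proposition \ref{finerthangap} it suffices to show that every net $(A_\lambda)$ in $CL(X)$ which $\mathsf{G}_{\mathcal{S},d}^+$-converges to $A$ also fulfills the hypothesis of Theorem \ref{Miss theorem for S topology}. Given $S\in\mathcal{S}$ and $f,g\in\mathcal{Z}^+$ with $\inf_{x\in S}(g(x)-f(x))>0$ and $A\cap B_d(S,g)=\emptyset$, the nonemptiness of $A$ forces $B_d(S,g)\ne X$, so (ii) supplies $S_1,\ldots,S_n\in\mathcal{S}$ and $0<\alpha_i<\epsilon_i$ with
$$ B_d(S,f)\subseteq\bigcup_{i=1}^n B_d(S_i,\alpha_i)\subseteq\bigcup_{i=1}^n B_d(S_i,\epsilon_i)\subseteq B_d(S,g). $$
From $A\cap B_d(S_i,\epsilon_i)=\emptyset$, Theorem \ref{Gapconvergence} gives, for each $i$, an index past which $A_\lambda\cap B_d(S_i,\alpha_i)=\emptyset$; taking the common maximum of these finitely many indices yields $A_\lambda\cap B_d(S,f)=\emptyset$ eventually, as required.

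For $(i)\Rightarrow(ii)$, I argue by contraposition. Assume (ii) fails, witnessed by $S\in\mathcal{S}$ and $f,g\in\mathcal{Z}^+$ with $\inf_{x\in S}(g(x)-f(x))>0$, $B_d(S,g)\ne X$, and $B_d(S,f)$ not strictly $(\mathcal{S}-d)$ included in $B_d(S,g)$. Set $A:=X\setminus B_d(S,g)\in CL(X)$ and apply Lemma \ref{Directed set} with $C=B_d(S,g)$ to obtain the directed set $(\Omega,\leqslant)$. Because every $(F,\alpha_F)\in\Omega$ comes with a witness $\epsilon_F$ satisfying $\alpha_{S'}<\epsilon_{S'}$ and $\bigcup_{S'\in F}B_d(S',\epsilon_{S'})\subseteq B_d(S,g)$, the failure of strict $(\mathcal{S}-d)$-inclusion produces a point $x_{(F,\alpha_F)}\in B_d(S,f)\setminus\bigcup_{S'\in F}B_d(S',\alpha_{S'})$; put $A_{(F,\alpha_F)}:=A\cup\{x_{(F,\alpha_F)}\}\in CL(X)$. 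Since each $A_{(F,\alpha_F)}$ meets $B_d(S,f)$ while $A$ misses $B_d(S,g)$, Theorem \ref{Miss theorem for S topology} shows the net does not $\tau_{\mathcal{S},d}^+$-converge to $A$.

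It remains to verify that $(A_{(F,\alpha_F)})$ does $\mathsf{G}_{\mathcal{S},d}^+$-converge to $A$. Given $S'\in\mathcal{S}$ and $0<\alpha<\epsilon$ with $A\cap B_d(S',\epsilon)=\emptyset$, we have $B_d(S',\epsilon)\subseteq X\setminus A=B_d(S,g)$, so $(F_0,\alpha_{F_0}):=(\{S'\},\{\alpha\})$ lies in $\Omega$ with witness $\{\epsilon\}$. For every $(F,\alpha_F)\geqslant(F_0,\alpha_{F_0})$ the definition of $\leqslant$ gives $B_d(S',\alpha)\subseteq\bigcup_{S''\in F}B_d(S'',\alpha_{S''})$, forcing $x_{(F,\alpha_F)}\notin B_d(S',\alpha)$; together with $A\cap B_d(S',\alpha)=\emptyset$ this yields $A_{(F,\alpha_F)}\cap B_d(S',\alpha)=\emptyset$, and Theorem \ref{Gapconvergence} furnishes the desired convergence. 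The step I expect to be most delicate is this last one: I need the pointwise selections $x_{(F,\alpha_F)}$, made independently at each index, to assemble into a genuine net whose cofinal tail is controlled by each singleton-pair $(F_0,\alpha_{F_0})$, and it is precisely the alignment between the strict inequalities $\alpha_i<\epsilon_i$ in the definition of strict $(\mathcal{S}-d)$-inclusion and the strict inequality $0<\alpha<\epsilon$ in Theorem \ref{Gapconvergence} that lets the directedness of $\Omega$ carry the argument through.
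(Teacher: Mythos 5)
Your proof is correct and follows essentially the same route as the paper: the same covering argument combining Theorem \ref{Gapconvergence} with Theorem \ref{Miss theorem for S topology} for $(ii)\Rightarrow(i)$, and the same counterexample net $A_{(F,\alpha_F)}=A\cup\{x_{(F,\alpha_F)}\}$ indexed by the directed set $\Omega$ of Lemma \ref{Directed set} for $(i)\Rightarrow(ii)$. The only cosmetic difference is that you verify the $\mathsf{G}_{\mathcal{S},d}^+$-convergence of this net via the enlargement criterion of Theorem \ref{Gapconvergence} applied to a single pair $(\{S'\},\{\alpha\})\in\Omega$, rather than via the basic open sets $\mathscr{A}_d^+(S'_1,\ldots,S'_m;\alpha'_1,\ldots,\alpha'_m)$ as the paper does, which if anything slightly streamlines that step.
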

		\begin{proof}
			$(i) \Rightarrow (ii)$. 
			Suppose $(ii)$ fails. Choose an $S_0 \in \mathcal{S}$ and $f, g \in \mathcal{Z}^+$ with $\inf\{g(x) - f(x) : x \in S_0\} > 0$ such that neither $B_d(S_0,g) = X$ nor $B_d(S_0,f)$ is strictly $(\mathcal{S}-d)$ included in $B_d(S_0 , g)$. Consider
			\begin{equation*}
				\Omega = \left\{(F, \alpha_F) \in \mathcal{F}(\mathcal{S}) \times \mathcal{F}(\mathbb{R}^+) :\begin{split}\exists~ \epsilon_{F} \in \mathcal{F}(\mathbb{R}^+)~ \text{with}~ \alpha_S < \epsilon_S ~\text{for}~ S \in F\\ \text{and}  \bigcup_{S \in F}B_d(S, \epsilon_S) \subseteq B_d(S_0, g)\end{split}\right\}.
			\end{equation*}

			%		Pick $(F, \alpha_F), (F', \alpha_F') \in \Omega$. By definition of $\Omega$, there are $(F, \epsilon_F), (F', \epsilon_{F'}) \in \mathcal{N}$ such that $\alpha_S < \epsilon_S$ $\forall S \in F$, and $\alpha_{S'} < \epsilon_{S'}$ $\forall S' \in F'$. Set $(F", \alpha_{F"}) = (F, \alpha_F) \star (F', \alpha_{F'})$, and $(F", \epsilon_{F"}) = (F, \epsilon_F) \star (F', \epsilon_{F'})$. It is easy to see that, $(F", \alpha_{F"}) \in \Omega$. Therefore, $\Omega$ is a cofinal subset of $\mathcal{N}$ 
			Clearly, $\Omega$ is nonempty as $(\{x\}, \{f(x)\}) \in \Omega$ for any $x \in S_0$. By Lemma \ref{Directed set}, $\Omega$ is a directed set under the relation $\leqslant$. Since $B_d(S_0, f)$ is not strictly $(\mathcal{S}-d)$ included in $B_d(S_0,g)$, for each $(F, \alpha_F) \in \Omega$ we have $x_{(F, \alpha_F)} \in B_d(S_0, f)\setminus (\cup_{S \in F}B_d(S, \alpha_S))$. Put $A = X \setminus B_d(S_0, g)$. Then $A \in CL(X)$. Now for any $(F, \alpha_F) \in \Omega$ define $A_{(F, \alpha_F)} = A \cup \{x_{(F, \alpha_F)}\}$. Consequently, $\left(A_{(F, \alpha_F)}\right)$ is a net in $CL(X)$.  We claim that the net $(A_{(F, \alpha_F)})$ $\mathsf{G}_{\mathcal{S},d}^+$-converges to $A$ while its $\tau_{\mathcal{S},d}^+$-convergence fails.  Take any finite subset $F' = \{S'_1,\ldots,S'_m\}$ of $\mathcal{S}$ and $\alpha_{F'} = (\alpha'_1,\ldots,\alpha'_m) \in \mathcal{F}(\mathbb{R}^+)$ such that $A \in \mathscr{A}_d^+(S'_1,\ldots,S'_m;\alpha'_1,\ldots,\alpha'_m)$. So $D_d(S'_i, A) > \alpha'_i$ for each $i = 1,\ldots,m$. Let $r'_i = \frac{D_d(S'_i,A) + \alpha'_i}{2}$ for $i = 1,\ldots,m$. Then $D_d(S'_i,A) > r'_i$ for $i = 1,\ldots,m$, which gives $\cup_{i = 1}^mB_d(S'_i, r'_i) \subseteq B_d(S_0, g)$. Thus, $(F', \alpha_{F'}) \in \Omega$. So there exists $\epsilon_{F'} = (\epsilon'_1,\ldots,\epsilon'_m) \in \mathcal{F}(\mathbb{R}^+)$ such that $\alpha'_i < \epsilon'_i$ for $1 \leq i \leq m $ and $(F', \epsilon_{F'}) \in \Omega$. Then for any $(F, \epsilon_F) \in \Omega$ such that $(F', \epsilon_{F'}) \leqslant (F, \epsilon_F)$ we have $\{x_{(F, \epsilon_F)}\} \in \mathscr{A}_d^+(S'_1,\ldots,S'_m;\alpha'_1,\ldots,\alpha'_m)$. Consequently, $A_{(F, \epsilon_F)}  \in \mathscr{A}_d^+(S'_1,\ldots,S'_m;\alpha'_1,\ldots,\alpha'_m)$ whenever $(F', \epsilon_{F'}) \leqslant (F, \epsilon_F)$. So the net $(A_{(F, \alpha_F)})_{(F, \alpha_F) \in \Omega}$ $\mathsf{G}_{\mathcal{S},d}^+$-converges to $A$. Now $A \cap B_d(S_0,g)
			= \emptyset$ but for every $(F, \alpha_F) \in \Omega$, we have $A_{(F, \alpha_F)} \cap B_d(S_0, f) \neq \emptyset$. Thus, by Theorem \ref{Miss theorem for S topology}, the net $\left(A_{(F, \alpha_F)}\right)_{(F, \alpha_F) \in \Omega}$ does not converge to $A$ with respect to $\tau_{\mathcal{S},d}^+$.

			$(ii)\Rightarrow (i)$. Suppose $(A_\lambda)$ is a  net in $CL(X)$ that $\mathsf{G}_{\mathcal{S},d}^+$-converges to $A \in CL(X)$. If $A = X$, then $(A_{\lambda})$ is $\tau_{\mathcal{S},d}^+$-convergent to $A$. Otherwise, consider $S \in \mathcal{S}$ and $f, g \in \mathcal{Z}^+$ with $\inf\{g(x) - f(x): x \in S\} > 0$ and $A \cap B_d(S,g) =  \emptyset$. Since $B_d(S, f)$ is strictly $(\mathcal{S}-d)$ included in $B_d(S, g)$, there exist $S_1, \ldots,S_n \in \mathcal{S}$, and $ 0 < \alpha_i < \epsilon_i$ for $i = 1,\ldots,n$ such that $B_d(S , f) \subseteq \cup_{i=1}^nB_d(S_i, \alpha_i) \subseteq \cup_{i=1}^nB_d(S_i, \epsilon_i) \subseteq B_d(S, g)$. So $A \cap (\cup_{i=1}^nB_d(S_i, \epsilon_i)) = \emptyset$. Then by Theorem \ref{Gapconvergence}, $A_\lambda \cap (\cup_{i=1}^nB_d(S_i, \alpha_i)) = \emptyset$ eventually. So  $A_\lambda \cap B_d(S , f) = \emptyset$ eventually. Consequently, by Theorem \ref{Miss theorem for S topology},  $(A_\lambda)$ $\tau_{\mathcal{S},d}^+$-converges to $A$.
			\end{proof}

		Recall that for $\mathcal{S} = \mathcal{B}_d(X)$ and $\mathcal{P}_0(X)$, the topology $\tau_{\mathcal{S},d}^+$ reduces to the upper Attouch-Wets topology, and the upper Hausdorff metric topology, respectively. So we have the following corollaries originally observed by Beer \cite{ToCCoS}.

	\begin{corollary}
		Let $(X,d)$ be a metric space. Then the upper Attouch-Wets topology on $CL(X)$ is the weakest topology such that each member of the family of set functionals $\{D_d(S,.\cdot):S \in \mathcal{B}_d(X)\}$ is lower semi-continuous. 
		
	\end{corollary}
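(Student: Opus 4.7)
The plan is to apply Theorem \ref{gap and Stopology equivaalence} to the bornology $\mathcal{S} = \mathcal{B}_d(X)$. Since $\tau_{\mathcal{B}_d(X), d}^+$ is by definition the upper Attouch-Wets topology $\tau_{AW_d}^+$, and $\mathsf{G}_{\mathcal{B}_d(X), d}^+$ is by definition the weakest topology on $CL(X)$ making every gap functional $D_d(S, \cdot)$ with $S \in \mathcal{B}_d(X)$ lower semi-continuous, the corollary reduces to verifying condition (ii) of Theorem \ref{gap and Stopology equivaalence} for $\mathcal{S} = \mathcal{B}_d(X)$.

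First I would fix $S \in \mathcal{B}_d(X)$ and $f, g \in \mathcal{Z}^+$ with $r := \inf\{g(x) - f(x) : x \in S\} > 0$. By Proposition \ref{Bounded set}, $B_d(S, g)$ is either equal to $X$ (in which case there is nothing to prove) or bounded. Assuming the latter, the pointwise inequality $f(x) < g(x)$ on $S$ gives $B_d(S, f) \subseteq B_d(S, g)$, so $B_d(S, f)$ is bounded as well and therefore lies in $\mathcal{B}_d(X)$. This is the only place where the structure of $\mathcal{B}_d(X)$ really enters; every subsequent step is purely metric.

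The next step is to exhibit a single-set witness to strict $(\mathcal{B}_d(X)-d)$ inclusion of $B_d(S, f)$ in $B_d(S, g)$. I would take $S_1 := B_d(S, f) \in \mathcal{B}_d(X)$, $\alpha_1 := r/3$, $\epsilon_1 := r/2$, and check the chain
\[
B_d(S, f) \ \subseteq\ B_d(S_1, \alpha_1)\ \subseteq\ B_d(S_1, \epsilon_1)\ \subseteq\ B_d(S, g).
\]
The first two inclusions are immediate since $B_d(S, f) \subseteq S_1$ and $\alpha_1 < \epsilon_1$. For the outer one, if $y \in B_d(S_1, r/2)$ then there is $z \in B_d(S, f)$ with $d(y, z) < r/2$, hence an $x \in S$ with $d(z, x) < f(x)$, and the triangle inequality together with $g(x) > f(x) + r$ yields $d(y, x) < f(x) + r/2 < g(x)$, so $y \in B_d(S, g)$. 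This verifies condition (ii), and Theorem \ref{gap and Stopology equivaalence} then gives $\tau_{AW_d}^+ = \mathsf{G}_{\mathcal{B}_d(X), d}^+$, which is exactly the asserted characterization.

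I do not expect any genuine obstacle here; the only mild subtlety worth flagging is that although $f$ and $g$ are defined on all of $X$, the sets $B_d(S, f)$ and $B_d(S, g)$ only see the values of $f$ and $g$ on $S$, so Proposition \ref{Bounded set} really is being applied to the restrictions of $f$ and $g$ to the bounded set $S$. Everything else is a one-line triangle-inequality computation.
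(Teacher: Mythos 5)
Your proposal is correct and follows essentially the same route as the paper: verify condition (ii) of Theorem \ref{gap and Stopology equivaalence} for $\mathcal{S}=\mathcal{B}_d(X)$ by using Proposition \ref{Bounded set} to get $B_d(S,g)=X$ or $B_d(S,f)\in\mathcal{B}_d(X)$, and then witness strict $(\mathcal{S}-d)$ inclusion with the single set $B_d(S,f)$ and radii on the order of $r/3<r/2$. The only cosmetic difference is that the paper delegates the inclusion chain to Proposition \ref{Strictly included sets} (after noting $D_d\bigl(B_d(S,f),\,X\setminus B_d(S,g)\bigr)\geq r>0$), whereas you unfold that lemma's one-line triangle-inequality argument explicitly.
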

	
	\begin{proof}
		Let $S \in B_d(X)$ and $f, g \in \mathcal{Z}^+$ with $\inf_{x \in S}(g(x) -f(x)) > 0$. Then by Proposition \ref{Bounded set}, either $B_d(S,g) \in \mathcal{B}_d(X)$ or $B_d(S,g) = X$.  If $B_d(S,g) \neq X$, then $B_d(S,f) \in \mathcal{B}_d(X)$. Further, by the choice of $f$ and $g$, $B_d(S,f)$ is far from $X \setminus B_d(S,g)$. Consequently, by Proposition \ref{Strictly included sets}, $B_d(S,f)$ is strictly $(\mathcal{S}-d)$ included in $B_d(S,g)$. Thus, the result follows from the Theorem \ref{gap and Stopology equivaalence}.       
		\end{proof}

		\begin{corollary}
		Let $(X,d)$ be a metric space. Then the upper Hausdorff metric topology on $CL(X)$ is the weakest topology such that each member of the family of set functionals $\{D_d(S,\cdot):S \in \mathcal{P}_0(X)\}$ is lower semi-continuous. 
	\end{corollary}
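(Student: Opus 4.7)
The plan is to apply Theorem \ref{gap and Stopology equivaalence} with the bornology $\mathcal{S} = \mathcal{P}_0(X)$. Since the upper Hausdorff metric topology $\tau_{H_d}^+$ coincides with $\tau_{\mathcal{P}_0(X),d}^+$, and since the weakest topology on $CL(X)$ making every $D_d(S,\cdot)$ with $S \in \mathcal{P}_0(X)$ lower semi-continuous is by definition $\mathsf{G}_{\mathcal{P}_0(X),d}^+$, it suffices to verify condition $(ii)$ of Theorem \ref{gap and Stopology equivaalence} for $\mathcal{S} = \mathcal{P}_0(X)$.

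Fix $S \in \mathcal{P}_0(X)$ and $f, g \in \mathcal{Z}^+$ with $r := \inf\{g(x) - f(x) : x \in S\} > 0$, and assume $B_d(S,g) \neq X$. I would set $T := B_d(S,f)$, which is a nonempty subset of $X$ and hence automatically a member of the bornology $\mathcal{P}_0(X)$. The key (and essentially only) computation is that $T$ is positively separated from $X \setminus B_d(S,g)$: given any $y \in T$, choose $x \in S$ with $d(y,x) < f(x)$, and for any $z \in X \setminus B_d(S,g)$ one has $d(z,x') \geq g(x')$ for all $x' \in S$, in particular $d(z,x) \geq g(x)$, so
\[ d(y,z) \geq d(z,x) - d(x,y) > g(x) - f(x) \geq r. \]
Taking the infimum over all such $y$ and $z$ gives $D_d(T, X \setminus B_d(S,g)) \geq r > 0$.

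With this positive gap in hand, Proposition \ref{Strictly included sets} (applied with $A = T$ and $B = X \setminus B_d(S,g)$) immediately yields that $B_d(S,f) = T$ is strictly $(\mathcal{P}_0(X) - d)$ included in $B_d(S,g)$. Condition $(ii)$ of Theorem \ref{gap and Stopology equivaalence} is therefore satisfied, and that theorem delivers $\tau_{\mathcal{P}_0(X),d}^+ = \mathsf{G}_{\mathcal{P}_0(X),d}^+$, which is exactly the statement of the corollary. There is no genuine obstacle in this argument: unlike in the Attouch-Wets corollary, no analogue of Proposition \ref{Bounded set} is required, because $\mathcal{P}_0(X)$ is trivially closed under enlargements by functions in $\mathcal{Z}^+$, so the set $T = B_d(S,f)$ is always an admissible witness in the definition of strict $(\mathcal{P}_0(X)-d)$ inclusion.
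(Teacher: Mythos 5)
Your proof is correct, and it is exactly the argument the paper intends: the paper states this corollary without proof immediately after the Attouch--Wets version, whose written proof follows the same route (positive gap between $B_d(S,f)$ and the complement of $B_d(S,g)$, then Proposition \ref{Strictly included sets}, then Theorem \ref{gap and Stopology equivaalence}), with your observation that the boundedness step via Proposition \ref{Bounded set} is unnecessary since $\mathcal{P}_0(X)$ contains every nonempty set. Nothing to add.
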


%	By Theorem $17$ of \cite{Idealtopo}, and Theorem \ref{gap and Stopology equivaalence} we have the following corollary. 
%	\begin{corollary}
%		Let $(X,d)$ be a metric space and let $\mathcal{S}$ be a bornology on $X$. The following are equivalent:
%		\begin{enumerate}[(i)]
%			\item $\mathcal{S}^+$-convergence ensures $\tau_{\mathcal{S},d}^+$-convergence;
%			
%			\item for $S \in \mathcal{S}$, and $f,g \in \mathcal{Z}^+$ with $ \inf\{g(x) - f(x): x \in S\} >0$, then the following statements hold:
%			\begin{enumerate}[(a)]
%				\item either $B_d(S, f)$ is strictly $(\mathcal{S}-d)$ included in $B_d(S, g)$ or  $B_d(S, g) =X$ and;
%				
%				\item either $\{x \in S: d(x,S) \leq r\} \in \mathcal{S}$ or $\{x \in X: d(x,S) \leq r\} = X$ for all $r > 0$. 
%			\end{enumerate}    
%			
%			
%		\end{enumerate}
%		
%	\end{corollary}
	
	\begin{remark}
		
			Note that each of the bornology $\mathcal{P}_0(X)$, $\mathcal{B}_d(X)$, and $\mathcal{F}(X)$ is shielded from closed sets and in each case we have $\tau_{\mathcal{S},d}^+ = \mathsf{G}_{\mathcal{S},d}^+$. 
			%One might infer that shielded form closed sets are either a necessary or sufficient condition for the coincidence of $\tau_{\mathcal{S},d}^+$ and $\mathsf{G}_{\mathcal{S},d}^+$. However, it is not necessary. 
			However, $\tau_{\mathcal{T}\mathcal{B}_d(X),d}^+ = \mathsf{G}_{\mathcal{T}\mathcal{B}_d(X),d}^+$ while $\mathcal{T}\mathcal{B}_d(X)$ need not be shielded from closed sets in general. For more on shields, see \cite{Bcas}.
			
			%\item It is easy to verify that whenever $\mathcal{S}$ contains singletons, $\mathsf{G}_{\mathcal{S},d}^- = \tau_{V}^-$. Since lower Vietoris topology coincides with lower Wijsman topology. By Proposition $4$ in \cite{Idealtopo}, $\tau_{\mathcal{S},d}^- = \mathsf{G}_{\mathcal{S},d}^-$ if and only if $\mathcal{S} \subseteq \mathcal{S}_{tb}$. 
		
	\end{remark}
	
We now give an example of a sequence $(A_n)$ of subsets of the Euclidean space $(\mathbb{R}^2, d_e)$ that appears geometrically not converging to $A \subseteq \mathbb{R}^2$ such that $(A_n)$ does not converge to $A$ with respect to $\tau_{\mathcal{S},d}^+$ but converges to $A$ with respect to $\mathsf{G}_{\mathcal{S},d}^+$.
	
	\begin{example}
		Let $(X,d) = (\mathbb{R}^2, d_e)$ and $\mathcal{B} = \{S \subseteq S_0 \cup F: F \in \mathcal{F}(X)\}$,  where $S_0 = \{(x,y) : x \geq 1, y = 1\}$. It is easy to verify that $\mathcal{B}$ is a bornology on $X$. Define $A_n = \{(x,y) : x \geq 0, y = n\}$ for each $n \in \mathbb{N}$ and $A = \{(x,y) : x = 0, y \geq 0\}$. Then $(A_n)$ is a sequence of closed sets. We claim that $(A_n)$ converges to $A$ in $\mathsf{G}_{\mathcal{B},d}^+$. Let  $S_1,\ldots,S_k \in \mathcal{B}$ and $\epsilon_1,\ldots,\epsilon_k > 0$ such that $A \in \mathscr{A}_d^+(S_1,\ldots,S_k; \epsilon_1,\ldots, \epsilon_k)$. Take $n_0 = \max\{y : (x,y) \in S_i, i = 1,\ldots,k\} + 2 \max \{\epsilon_i : i = 1,\ldots,k\}$. Observe that, for $n \geq n_0$, $A_n \in \mathscr{A}_d^+(S_1,\ldots,S_k;\epsilon_1,\ldots,\epsilon_k)$.  Thus, we have established $\mathsf{G}_{\mathcal{B},d}^+$ convergence.
	
		%As if for any $n \geq n_0$ $A_n \notin \mathcal{A}_d^+(B_1,..,B_k;\epsilon_1,..,\epsilon_k)$. Then $D_d(A_{n_{0}},B_i) < \frac{3\epsilon_i}{2}$ for some $i = 1,..,n$. So there exist $(x,y) \in A_{n_{0}}$ and $(x_i,y_i) \in \cap B_i$ such that $d_e((x,y),(x_i,y_i)) < \frac{3\epsilon_i}{2}$, which gives, $n_0 - y_i < \frac{3\epsilon_i}{2}$. This gives, $y_i + 2\epsilon_i \leq n_0 < y_i + \frac{3\epsilon_i}{2}$. This is a contradiction. Thus, we have established $\mathsf{G}_{\mathcal{B},d}^+$ convergence.\\
		To see $(A_n)$ does not converge to $A$ in $\tau_{\mathcal{B},d}^+$. Take $S = \mathbb{N} \times \{1\}$ and let $ f,g \in \mathcal{Z}^+$ defined by $f((m,1)) = \frac{m}{8}$ and $g((m,1)) = \frac{m}{4}$ for each $m \in \mathbb{N}$. Observe that $\inf\{g((m,1)) - f((m,1)) : m \in \mathbb{N}\} = \frac{1}{8}$. Also $A \cap B_d(S,g) = \emptyset$.% as if there exists $(0,y) \in A$ such that $d((0,y),(m,1)) < g((m,1))$ for some $m \in \mathbb{N}$, then $m^2 + (y-1)^2 < \frac{m^2}{16}$, which is not possible.
		 Since for any $n \in \mathbb{N}$, $(16n,n) \in A_n \cap B_d((16n,1),\frac{16n}{8})$, $A_n \cap B_d(S, f) \neq \emptyset$ for every $n \in \mathbb{N}$. Thus, $(A_n)$ does not converge to $A$ in $\tau_{\mathcal{B},d}^+$. \qed
	\end{example}

%	\begin{remark}
%	The above example not only shows that $\tau_{\mathcal{S},d}^+$-convergence may be strictly stronger than $\mathsf{G}_{\mathcal{S},d}^+$-convergence but also that $\tau_{\mathcal{S},d}^+$-convergence is more well-behaved than the $\mathsf{G}_{\mathcal{S},d}^+$-convergence when it comes to convergence of sets in Euclidean spaces. %As a result, one may infer that out of these two unified approaches for studying hyperspace convergences, $\tau_{\mathcal{S},d}^+$-convergence is more suitable.  
%		
%		
%		
%		It can be concluded from above example that $\tau_{\mathcal{S},d}^+$-convergence is more well-behaved than $\mathsf{G}_{\mathcal{S},d}^+$-convergence. As, the sequence $(A_n)$ does not geometrically seems going towards $A$ while it shows $\mathsf{G}_{\mathcal{S},d}^+$-convergence. Therefore, one may infer that out of these two unified approaches for studying hyperspace convergences, $\tau_{\mathcal{S},d}^+$-convergence is more suitable.  
%	\end{remark}

	\section{Bornological Convergence}
	In \cite{Idealtopo}, the authors characterized the coincidence of $\mathcal{S}^+$-convergence and $\tau_{\mathcal{S},d}^+$-convergence for an arbitrary bornology $\mathcal{S}$. They used the notion of \textit{$\mathcal{S}$-convergent to infinity} to characterize the aforesaid coincidence. In this section, we first provide a new approach to determine $\mathcal{S}^+$-convergence which is similar to the one given for the $\tau_{\mathcal{S},d}^+$-convergence in Theorem \ref{Miss theorem for S topology}. Then we give necessary and sufficient conditions on the structure of the bornology $\mathcal{S}$ for the coincidence of $\mathcal{S}^+$-convergence and $\tau_{\mathcal{S},d}^+$-convergence on $CL(X)$.
	% Finally, we discuss the coincidence of $\mathcal{S}^+$-convergence and $\tau_{\mathcal{S},d}^+$-convergence for some particular bornologies on $X$. 

	%We explore more transparent situations using the new characterizations of $\tau_{\mathcal{S},d}^+$-convergence and $\mathcal{S}^+$-convergence introduced in this article.

%	This section focuses for the prerequisites and circumstances for the coincidence of $\tau_{\mathcal{S},d}^+$-convergence and $\mathcal{S}^+$-convergence on $CL(X)$. Similar results are already present in the seminal article \cite{Idealtopo}, which uses the notion of \textit{$\mathcal{S}$-convergent to infinity}. Although the authors answered the question that when $\tau_{\mathcal{S},d}^+$-convergence agree with $\mathcal{S}^+$ -convergence for an arbitrary bornology $\mathcal{S}$ while the role of bornology $\mathcal{S}$ remains latent. This section explores new conditions for the coincidence of the hyperspace convergences mentioned above.

%	We start with a new representation of upper bornological convergence. This representation is intuitively more geometrical than the standard one.  
	
	\begin{theorem}[Miss type characterization for bornological convergence]\label{upperbornologicalrepresentation}
		Let $(X,d)$ be a metric space and let $\mathcal{S}$ be a bornology on $X$. Suppose $(A_\lambda)$ is a net in $\mathcal{P}_0(X)$ $(CL(X))$ and $A$ is a nonempty (closed) subset of $X$. Then the following statements are equivalent. 
		\begin{enumerate} [(i)]
			\item  $(A_\lambda)$ $\mathcal{S}^+$-converges to $A$;
			
			\item for $S \in \mathcal{S}$ and $\epsilon > 0$, whenever $A$ misses $\epsilon$-enlargement of $S$, then $(A_\lambda)$ misses $S$ eventually.  
		\end{enumerate}
	\end{theorem}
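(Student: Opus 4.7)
The plan is to establish the equivalence by handling each implication separately; both directions are short manipulations that rest on the triangle inequality and the fact that a bornology is closed under nonempty subsets.

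For $(i) \Rightarrow (ii)$, I would argue by contradiction. Fix $S \in \mathcal{S}$ and $\epsilon > 0$ with $A \cap B_d(S, \epsilon) = \emptyset$. By $\mathcal{S}^+$-convergence applied to this same $S$ and $\epsilon$, eventually $A_\lambda \cap S \subseteq B_d(A, \epsilon)$. If past that index we had some $x \in A_\lambda \cap S$, then $d(x, A) < \epsilon$ would produce $a \in A$ with $d(a, x) < \epsilon$; since $x \in S$, this forces $a \in B_d(S, \epsilon)$, contradicting the choice of $S$ and $\epsilon$. Hence $A_\lambda \cap S = \emptyset$ eventually.

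For $(ii) \Rightarrow (i)$, the key trick is to isolate the ``far from $A$'' portion of $S$. Fix $S \in \mathcal{S}$ and $\epsilon > 0$, and set $S' := \{x \in S : d(x, A) \geq \epsilon\}$. If $S' = \emptyset$, then trivially $A_\lambda \cap S \subseteq S \subseteq B_d(A, \epsilon)$ for every $\lambda$. Otherwise $S' \in \mathcal{S}$ because $\mathcal{S}$ is hereditary, and I claim $A \cap B_d(S', \epsilon/2) = \emptyset$: any $a$ in this intersection yields $x \in S'$ with $d(a, x) < \epsilon/2$, forcing $d(x, A) < \epsilon/2 < \epsilon$, contrary to the definition of $S'$. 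Now (ii) applied to $S'$ and $\epsilon/2$ gives $A_\lambda \cap S' = \emptyset$ eventually, which is precisely $A_\lambda \cap S \subseteq B_d(A, \epsilon)$ eventually.

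There is no real obstacle beyond spotting the correct truncation $S' = \{x \in S : d(x, A) \geq \epsilon\}$, which converts an enlargement of $A$ into a disjointness statement about an enlargement of a subset of $S$. The only structural property of $\mathcal{S}$ actually used is hereditariness; this is also what makes the statement work for $CL(X)$ as well as $\mathcal{P}_0(X)$, since closedness of $A$ is never invoked. I would close by remarking that Theorem \ref{upperbornologicalrepresentation} puts $\mathcal{S}^+$-convergence in direct analogy with the miss-type criteria in Theorems \ref{Gapconvergence} and \ref{Miss theorem for S topology}, facilitating the comparisons in the next results.
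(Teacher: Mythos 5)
Your proof is correct and follows essentially the same route as the paper: the forward direction is the same immediate observation (phrased via contradiction), and your truncation $S' = \{x \in S : d(x,A) \geq \epsilon\}$ is exactly the paper's $S' = S \setminus B$, with the only cosmetic difference that you apply (ii) with radius $\epsilon/2$ where the paper uses $\epsilon$ (both work, since $A \cap B_d(S',\epsilon) = \emptyset$ already holds). Your closing remarks on hereditariness being the only property of $\mathcal{S}$ used, and on closedness of $A$ being irrelevant, are accurate.
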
 
	\begin{proof}
		$(i) \Rightarrow (ii)$. Let $S \in \mathcal{S}$ and $\epsilon > 0$  such that $A \cap B_d(S, \epsilon) = \emptyset$. By the hypothesis, there is a $\lambda_0 \in \Lambda$ such that for all $\lambda \geq \lambda_0$, we have $A_\lambda \cap S \subseteq B_d(A, \epsilon)$. Consequently, by the choice of $S$ and $\epsilon$, $A_\lambda \cap S = \emptyset$ for all $\lambda \geq \lambda_0$.
		
		$(ii)\Rightarrow(i)$. Let $S \in \mathcal{S}$ and $\epsilon > 0$. If $A \cap B_d(S, \epsilon) = \emptyset$, then there is a $\lambda_1 \in \Lambda$ such that $A_\lambda \cap S = \emptyset$ for all $\lambda \geq \lambda_1$. Consequently,  $A_\lambda \cap S \subseteq B_d(A, \epsilon)$ for all $\lambda \geq \lambda_1$. Otherwise, let $B = \{x \in S : x \in B_d(A, \epsilon)\}$ and $S' = S \setminus B$. When $B = S$, then $A_{\lambda} \cap S \subseteq B_d(A, \epsilon)$ for all $\lambda$. If $B\neq S$, then $S' \neq \emptyset$.  Moreover, $A \cap B_d(S', \epsilon) = \emptyset$. Thus, by the hypothesis there is a $\lambda_2 \in \Lambda$ such that $A_\lambda \cap S' = \emptyset$ for all $\lambda \geq \lambda_2$. Also for each $\lambda \in \Lambda$,  $A_\lambda \cap B \subseteq B_d(A, \epsilon)$. Consequently, $A_\lambda \cap S \subseteq B_d(A, \epsilon)$ for all $\lambda \geq \lambda_2$. Hence $(A_\lambda)_{\lambda \in \Lambda}$ $\mathcal{S}^+$-converges to $A$.     
	\end{proof}

%	The engaging reader may modify proof of the above theorem to obtain the result for $CL(X)$. %Observe that whenever $A$ is an empty set, $(i)$ and $(ii)$ of Theorem \ref{upperbornologicalrepresentation} are not equivalent. Therefore, one may assume that every net $(A_\lambda) \in \mathcal{P}(X)$ is $\mathcal{S}^+$-convergent to $\emptyset$. 

We would like to mention that the equivalence of Theorem \ref{upperbornologicalrepresentation} may not hold if we do not assume $\mathcal{S}$ to be a bornology  on $X$. This is shown by the following example.

	\begin{example}\label{counterexample_bornollogical}
		Suppose $X = \mathbb{R}^2$ and $d =  d_e$, the Euclidean metric. Let $\mathcal{S} = \{B_d((0,0), 2m) :  m \in \mathbb{N}\}$. Then $\mathcal{S}$ is a cover of $X$. Observe that $\downarrow\mathcal{S} = \{B \subseteq S: 	S \in \mathcal{S}\} = \mathcal{B}_d(X)$. So by Corollary $3.3$ of \cite{BCL}, we have $\mathcal{S}^+ = \tau_{AW_d}^+$. Let $A = \{(x,y): y = \frac{1}{x}, x > 0\}$, and $A_n = \{(x,y) : y = \frac{1}{x} + \frac{1}{n}, x < 0\}$ for each $n \in \mathbb{N}$. Note that $A \cap B_d((0,0), 2m) \neq \emptyset$ for any $m \geq 1$. Thus, the sequence $(A_n)$ and $A$ satisfy $(ii)$ of Theorem \ref{upperbornologicalrepresentation}. On the other hand, for any $0 < \epsilon < 1$ and $n \in \mathbb{N}$, $\emptyset \neq A_n \cap B_d((0,0), 2) \nsubseteq B_d(A, \epsilon)$. Therefore, the $\tau_{AW_d}^+$-convergence of sequence $(A_n)$ to $A$ fails.\qed 
		\end{example}
%Figure $2$ illustrates Example \ref{counterexample_bornollogical} graphically.
	
%	\begin{figure*}
%\centering
%		\includegraphics[width = 15 cm, height = 7 cm]{puref.png}
%		\caption{Graphical Representation of Example \ref{counterexample_bornollogical}}
%	\end{figure*} 
	
Recall that for a bornology $\mathcal{S}$ on a metric space $(X,d)$, the upper $\mathcal{S}$-proximal topology $\mu_{\mathcal{S}}^{++}$ on $\mathcal{P}_0(X)$ is generated by all sets of the form: $(S^c)^{++} = \{A \in \mathcal{P}_0(X):  S \cap B_d(A, \epsilon)= \emptyset~ \text{for some}~ \epsilon > 0  \}$, where $S \in \mathcal{S}$ (\cite{BCL, di2003bombay}).	The relation between $\mathcal{S}^+$-convergence and $\mu_{\mathcal{S}}^{++}$-convergence is given in Proposition $15$ of \cite{Idealtopo}. We reproduce its proof using Theorem \ref{upperbornologicalrepresentation}.
	
	\begin{proposition}
		Let $(X,d)$ be a metric space and let $\mathcal{S}$ be a bornology on $X$. Then the following statements are equivalent:
		\begin{enumerate}[(i)]
			\item for each $S \in \mathcal{S}$ there is an $\epsilon > 0$ such that $B_d(S,\epsilon) \in \mathcal{S}$, that is, $\mathcal{S}$ is stable under small enlargements;
			
			\item $\mathcal{S}^+$-convergence is compatible with $\mu_{\mathcal{S}}^{++}$-convergence on $\mathcal{P}_0(X)$;
			
			\item $\mathcal{S}^+$-convergence is topological on $\mathcal{P}_0(X)$. 
		\end{enumerate}
		
	\end{proposition}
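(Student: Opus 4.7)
The plan is a cyclic argument $(i) \Rightarrow (ii) \Rightarrow (iii) \Rightarrow (i)$, with Theorem \ref{upperbornologicalrepresentation} as the workhorse throughout.

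For $(i) \Rightarrow (ii)$, I would show that $\mathcal{S}^+$-convergence and $\mu_{\mathcal{S}}^{++}$-convergence coincide on $\mathcal{P}_0(X)$. The direction $\mu_{\mathcal{S}}^{++} \Rightarrow \mathcal{S}^+$ needs no hypothesis: given $S \in \mathcal{S}$ and $\epsilon > 0$ with $A \cap B_d(S,\epsilon) = \emptyset$, the point $A$ lies in $(S^c)^{++}$, so $\mu_{\mathcal{S}}^{++}$-convergence puts $A_\lambda \in (S^c)^{++}$ eventually; in particular $A_\lambda \cap S = \emptyset$ eventually, and Theorem \ref{upperbornologicalrepresentation} delivers $\mathcal{S}^+$-convergence. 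For the reverse $\mathcal{S}^+ \Rightarrow \mu_{\mathcal{S}}^{++}$, I would use stability: given $A \in (S^c)^{++}$ with $A \cap B_d(S, \delta) = \emptyset$, pick $\delta' \in (0, \delta)$ with $S' := B_d(S, \delta') \in \mathcal{S}$. The enlargement inclusion $B_d(B_d(S,\delta'), \delta-\delta') \subseteq B_d(S,\delta)$ recalled in Section~2 gives $A \cap B_d(S', \delta-\delta') = \emptyset$, and Theorem \ref{upperbornologicalrepresentation} applied to $(S', \delta-\delta')$ yields $A_\lambda \cap S' = \emptyset$ eventually, i.e.\ $A_\lambda \cap B_d(S, \delta') = \emptyset$ eventually, so $A_\lambda \in (S^c)^{++}$ eventually.

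For $(ii) \Rightarrow (iii)$, the implication is immediate, since $\mu_{\mathcal{S}}^{++}$ is a topology by construction.

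For $(iii) \Rightarrow (i)$, I would argue the contrapositive. Suppose $\mathcal{S}$ fails stability at some $S_0 \in \mathcal{S}$, so $B_d(S_0, 1/n) \notin \mathcal{S}$ for every $n$. Heritariness of $\mathcal{S}$ then supplies, for each $n$ and each $T \in \mathcal{S}$ with $T \supseteq S_0$, an infinite subset $Z_{n, T} \subseteq B_d(S_0, 1/n) \setminus T$ obtained by iterating the nonemptiness of $B_d(S_0, 1/k) \setminus T$ for each $k \geq n$, and one arranges $d(Z_{n, T}, S_0) = 0$. Fix any $A$ with $d(A, S_0) > 0$ and set $A_{n, T} = A \cup Z_{n, T}$, indexing by $\mathbb{N} \times \mathcal{S}$ under product order. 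Theorem \ref{upperbornologicalrepresentation} confirms $A_{n, T} \to A$ in $\mathcal{S}^+$, while each $A_{n, T}$ satisfies $d(A_{n, T}, S_0) = 0$ and hence lies outside $(S_0^c)^{++}$. The main obstacle is to convert this witness that (ii) fails into a witness that (iii) fails. My route is to establish that in any topology $\tau$ realizing $\mathcal{S}^+$-convergence, the sets $F(S, \epsilon, A) := \{C \in \mathcal{P}_0(X) : C \cap S \subseteq B_d(A, \epsilon)\}$ form a $\tau$-neighborhood base at $A$ — otherwise, a diagonally constructed $\mathcal{S}^+$-convergent net would escape a putative $\tau$-neighborhood of $A$. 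The topological interior condition for $F(S_0, \epsilon_0, A)$ then demands some basic $F(S^*, \epsilon^*, A)$ to be $\tau$-open, and testing the open condition at points of the form $C = A \cup \{b\}$ with $b$ ranging over $B_d(A, \epsilon^*) \cap S_0$ extracts that some enlargement $B_d(S_0, \delta)$ must lie in $\mathcal{S}$, contradicting the failure of stability at $S_0$.
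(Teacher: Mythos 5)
Your $(i)\Rightarrow(ii)$ and $(ii)\Rightarrow(iii)$ are correct and essentially identical to the paper's argument: both halves of the coincidence with $\mu_{\mathcal{S}}^{++}$ are run through Theorem \ref{upperbornologicalrepresentation}, with stability under small enlargements invoked only for the direction $\mathcal{S}^+\Rightarrow\mu_{\mathcal{S}}^{++}$, exactly as the paper does.

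The divergence is at $(iii)\Rightarrow(i)$, which the paper does not prove at all --- it simply cites Theorem $3.7$ of \cite{BCL}. Your attempt at a self-contained proof is more ambitious, but as written it has real gaps. First, the construction starts from ``fix any $A$ with $d(A,S_0)>0$''; such an $A$ need not exist: take $X=\mathbb{R}$, $S_0=\mathbb{Q}$ and $\mathcal{S}$ the bornology of nonempty countable sets, where stability fails at $S_0$ yet every nonempty set is at gap $0$ from $S_0$. Second, your claim that the sets $F(S,\epsilon,A)$ form a $\tau$-neighborhood base at $A$ is correct and standard (both halves follow from equating $\tau$-convergence with $\mathcal{S}^+$-convergence and building a suitable net), but the next step --- that some basic $F(S^*,\epsilon^*,A)$ must be $\tau$-\emph{open} --- does not follow: members of a neighborhood base need not be open, and nothing in the convergence forces any $F(S,\epsilon,A)$ to be open. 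Third, and decisively, the final assertion that ``testing the open condition at points $C=A\cup\{b\}$ extracts that some enlargement $B_d(S_0,\delta)$ must lie in $\mathcal{S}$'' is stated with no mechanism; this is precisely the hard content of the necessity direction, where the proof in \cite{BCL} does its actual work (via an iterated-limit/diagonal-net argument), and it cannot be waved through. As it stands, $(iii)\Rightarrow(i)$ is not established; either carry out that diagonal construction in full or, as the paper does, defer to Theorem $3.7$ of \cite{BCL}.
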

	
	\begin{proof}
		$(i) \Rightarrow (ii)$. Suppose $(A_\lambda)$ is a net in $\mathcal{P}_0(X)$ that $\mathcal{S}^+$-converges to a nonempty set $A$. If $\overline{A} = X$, then there is nothing to show. Otherwise, let $S \in \mathcal{S}$ be such that $S \cap B_d(A, \epsilon) = \emptyset$, equivalently $A \cap B_d(S, \epsilon) = \emptyset$ for some $\epsilon > 0$. By assumption, we can find $0 < \delta < \frac{\epsilon}{2}$ such that $B_d(S, \delta) \in \mathcal{S}$. Then, $A \cap B_d(B_d(S,\delta),{\frac{\epsilon}{2}}) =  \emptyset$. Applying Proposition \ref{upperbornologicalrepresentation}, it can be concluded that, $A_\lambda \cap B_d(S,\delta) = \emptyset$ eventually. Thus, the net $(A_\lambda)_{\lambda \in \Lambda}$ $\mu_{\mathcal{S}}^{++}$-converges to $A$.
		 
		Conversely, suppose $(A_\lambda)$ $\mu_{\mathcal{S}}^{++}$-converges to $A$. Let $S \in \mathcal{S}$ and $\epsilon >0$ be such that $A \cap B_d(S,\epsilon) = \emptyset$. Then $A_\lambda \in (S^c)^{++}$ eventually. Thus, $A_\lambda \cap S = \emptyset$ eventually. Hence by Theorem \ref{upperbornologicalrepresentation}, $(A_\lambda)$ $\mathcal{S}^+$-converges to $A$.
		
		The implications $(ii) \Rightarrow (iii)$ is immediate and for $(iii) \Rightarrow (i)$ see Theorem $3.7$ of \cite{BCL}.   
	\end{proof}

	In \cite{Idealtopo}, the authors gave an example (Example 20) showing the condition that a family $\mathcal{S}$ being stable under enlargements by positive constants may not be sufficient to enforce the equivalence of $\tau_{\mathcal{S},d}^+$ and  $\mathcal{S}^+$-convergences. So we may need a stronger stability condition on $\mathcal{S}$ to achieve the required equivalence.  We now give the main result of this section that divulges this stronger stability condition on $\mathcal{S}$.
	
		\begin{theorem}\label{equivalence S topology and bornological}
		Let $(X,d)$ be a metric space and let $\mathcal{S}$ be a bornology on $X$. Then the following statements are equivalent:
		
		\begin{enumerate}[(i)]
				\item $\tau_{\mathcal{S},d}^+$-convergence $ = \mathcal{S}^+$-convergence on $CL(X)$;
				%\item for each $S \in \mathcal{S}$ and $f, g \in \mathcal{Z}^+$ such that $\inf_{x \in S}(g(x) - f(x))> 0$ then, either $B_d(S,g) = X$, or there is a $S' \in \mathcal{S}$ and $\epsilon' > 0$ satisfying, $$B_d(S,f) \subseteq S' \subseteq B_d(S', \epsilon') \subseteq B_d(S,g);$$ 
			\item for each $S \in \mathcal{S}$ and $f, g \in \mathcal{Z}^+$ with $\inf\{g(x) - f(x) : x \in S\} > 0$, either $B_d(S,f) \in \mathcal{S}$ or $B_d(S,g) = X$.
				\end{enumerate} 
			\end{theorem}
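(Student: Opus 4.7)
My plan is to reformulate both convergences uniformly using the miss-type characterizations already established—Theorem \ref{Miss theorem for S topology} for $\tau_{\mathcal{S},d}^+$ and Theorem \ref{upperbornologicalrepresentation} for $\mathcal{S}^+$. Once both convergences speak the same language, the equivalence will pivot on translating between enlargements by positive constants and enlargements by positive functions, and condition (ii) is precisely the bridge.

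For the implication $(ii)\Rightarrow(i)$, I will use that $\mathcal{S}^+\leq \tau_{\mathcal{S},d}^+$ comes for free from Corollary \ref{allthreerelations}, so only the converse needs work. Given an $\mathcal{S}^+$-convergent net $(A_\lambda)$ to $A\in CL(X)$, I plan to verify condition (i) of Theorem \ref{Miss theorem for S topology}: fix $S\in\mathcal{S}$ and $f,g\in\mathcal{Z}^+$ with $r:=\inf\{g(x)-f(x):x\in S\}>0$ and $A\cap B_d(S,g)=\emptyset$. Since $A$ is nonempty, $B_d(S,g)\neq X$, so hypothesis (ii) forces $B_d(S,f)\in\mathcal{S}$. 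A one-line triangle-inequality check gives $B_d(B_d(S,f),r)\subseteq B_d(S,g)$, so $A$ misses the $r$-enlargement of $S':=B_d(S,f)$. Applying Theorem \ref{upperbornologicalrepresentation} to this $S'\in\mathcal{S}$ then produces $A_\lambda\cap B_d(S,f)=\emptyset$ eventually, which is exactly what Theorem \ref{Miss theorem for S topology} demands.

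For $(i)\Rightarrow(ii)$ I will argue by contraposition and build an explicit counterexample net. Assume some $S_0\in\mathcal{S}$ and $f,g\in\mathcal{Z}^+$ with $\inf\{g(x)-f(x):x\in S_0\}>0$ satisfy $B_d(S_0,f)\notin\mathcal{S}$ and $B_d(S_0,g)\neq X$. Take $A:=X\setminus B_d(S_0,g)$, which lies in $CL(X)$ and is nonempty by the second assumption, and index the net over the directed set $(\mathcal{S},\subseteq)$, directedness coming from closure under finite unions. Heredity of $\mathcal{S}$ combined with $B_d(S_0,f)\notin\mathcal{S}$ implies $B_d(S_0,f)\nsubseteq S$ for every $S\in\mathcal{S}$, so I can select $x_S\in B_d(S_0,f)\setminus S$ and set $A_S:=A\cup\{x_S\}\in CL(X)$. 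Cofinality then yields $\mathcal{S}^+$-convergence of $(A_S)$ to $A$ via Theorem \ref{upperbornologicalrepresentation}: if $S'\in\mathcal{S}$ and $A\cap B_d(S',\epsilon)=\emptyset$, then for any $S\supseteq S'$ one has $x_S\notin S'$ and $A\cap S'=\emptyset$, so $A_S\cap S'=\emptyset$. Meanwhile every $A_S$ meets $B_d(S_0,f)$ through $x_S$ although $A$ misses $B_d(S_0,g)$, so Theorem \ref{Miss theorem for S topology} blocks $\tau_{\mathcal{S},d}^+$-convergence.

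The main technical obstacle is the construction in the contrapositive: one has to notice that heredity of $\mathcal{S}$ upgrades the purely set-theoretic statement $B_d(S_0,f)\notin\mathcal{S}$ to the stronger pointwise statement $B_d(S_0,f)\nsubseteq S$ for every $S\in\mathcal{S}$, which is exactly what makes the choice of $x_S$ possible. Once this observation is in hand, the whole argument is noticeably cleaner than the $\Omega$-construction used for Theorem \ref{gap and Stopology equivaalence}, because the bornology itself, directed by finite unions, already serves as the indexing set without any auxiliary structure.
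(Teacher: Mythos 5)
Your proof is correct and follows essentially the same route as the paper: both directions reduce to the miss-type characterizations (Theorems \ref{Miss theorem for S topology} and \ref{upperbornologicalrepresentation}), and the contrapositive direction builds the same net $A_S = (X\setminus B_d(S_0,g))\cup\{x_S\}$ with $x_S$ chosen via heredity. The only (harmless) difference is that you index the net over all of $(\mathcal{S},\subseteq)$, whereas the paper restricts to the subfamily $\Omega$ of members of $\mathcal{S}$ admitting an enlargement inside $B_d(S_0,g)$; your version works just as well and is marginally cleaner.
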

	\begin{proof}
		$(i) \Rightarrow (ii)$. Suppose there exist $S_0 \in \mathcal{S}$ and $f,g \in \mathcal{Z}^+$ with $\inf_{x  \in S_0}(g(x) -f(x)) > 0$ for which $B_d(S_0, g) \neq X$ as well as $B_d(S_0, f) \notin \mathcal{S}$. Define $$ \Omega = \{S \in \mathcal{S}: \text{ there is an }\epsilon > 0 \text{ such that } B_d(S, \epsilon) \subseteq B_d(S_0, g) \}.$$ Note that $\Omega$ is nonempty as $S_0 \in \Omega$. By assumption, for each $S \in \Omega$ there is an $x_S \in B_d(S_0,f) \setminus S$. Put $A = X \setminus B_d(S_0, g)$ and for $S \in \Omega$, let $A_S = A \cup \{x_S\}$. Direct $\Omega$ by set inclusion. Then $(A_S)_{S \in \Omega}$ is a net in $CL(X)$. Since $A_S\cap B_d(S_0, f) \neq \emptyset$ for each $S \in \mathcal{S}$ and $A\cap B_d(S_0, g) =\emptyset$, by Theorem \ref{Miss theorem for S topology}, the $\tau_{\mathcal{S},d}^{+}$-convergence of $(A_S)$ to $A$ fails. To see $(A_S)$ $\mathcal{S}^+$-converges to $A$, let $S_1 \in \mathcal{S}$ and $\epsilon_1 > 0$ such that $A \cap B_d(S_1, \epsilon_1) = \emptyset$. So $S_1 \in \Omega$. Then for any $S \in \Omega$ with $S \supseteq S_1$, we have $A_S \cap S_1 = \emptyset$.  Therefore, by Theorem \ref{upperbornologicalrepresentation}, the net $(A_S)$ is $\mathcal{S}^+$-convergent to $A \in CL(X)$. This is a contradiction.
		
		$(ii) \Rightarrow (i)$. Suppose $(A_\lambda)$ is a net $\mathcal{S}^+$-converging to $A$ in $CL(X)$. If $A = X$, then there is nothing to show. Otherwise, let $S \in \mathcal{S}$, and $f, g \in \mathcal{Z}^+$ with $\inf_{x \in S}(g(x) - f(x)) = r > 0$ such that $A \cap B_d(S,g) = \emptyset$. By the hypothesis, we get $S' = B_d(S,f) \in \mathcal{S}$. Since $A \cap B_d(S', r) = \emptyset$ and the net $(A_S)$ is $\mathcal{S}^+$-convergent to $A$, by Theorem \ref{upperbornologicalrepresentation}, we obtain $A_{\lambda} \cap S' = \emptyset$ eventually. Thus, by Theorem \ref{Miss theorem for S topology}, the net $(A_{\lambda})$ is $\tau_{\mathcal{S},d}^+$-convergent to $A$. 
		\end{proof}

\begin{corollary}
Let $(X,d)$ be a metric space and let $\mathcal{S}$ be a bornology on $X$. If $B_d(S,f) \in \mathcal{S}$ for each $S \in \mathcal{S}$ and $f \in \mathcal{Z}^+$, then $\tau_{\mathcal{S},d}^+$-convergence $ = \mathcal{S}^+$-convergence on $CL(X)$.
\end{corollary}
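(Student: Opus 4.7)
The plan is to derive this corollary immediately from Theorem \ref{equivalence S topology and bornological}. That theorem states that $\tau_{\mathcal{S},d}^{+}$-convergence coincides with $\mathcal{S}^+$-convergence on $CL(X)$ if and only if, for every $S \in \mathcal{S}$ and every pair $f,g \in \mathcal{Z}^+$ with $\inf\{g(x)-f(x):x\in S\} > 0$, either $B_d(S,f) \in \mathcal{S}$ or $B_d(S,g) = X$. Under the hypothesis of the corollary, the first alternative is automatically satisfied for every $S \in \mathcal{S}$ and every $f \in \mathcal{Z}^+$, regardless of any choice of $g$. Hence the sufficient condition in Theorem \ref{equivalence S topology and bornological} holds trivially, and the equivalence of the two convergences follows.

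More explicitly, the only step is: assume the hypothesis, pick any $S \in \mathcal{S}$ and any $f, g \in \mathcal{Z}^+$ with $\inf\{g(x)-f(x):x\in S\} > 0$; then $B_d(S,f) \in \mathcal{S}$ by assumption, so the first disjunct of condition (ii) in Theorem \ref{equivalence S topology and bornological} holds, and the second disjunct need not be examined. Invoking the implication $(ii)\Rightarrow(i)$ of that theorem yields the desired coincidence on $CL(X)$.

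There is no real obstacle here: the corollary is a direct specialization of the stability characterization to the strongest possible stability assumption. No auxiliary construction, net argument, or appeal to the miss-type characterizations (Theorems \ref{Miss theorem for S topology} and \ref{upperbornologicalrepresentation}) is needed beyond what is already packaged into Theorem \ref{equivalence S topology and bornological}. Thus the proof is a one-line reduction.
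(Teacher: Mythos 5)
Your proof is correct and coincides with the paper's (implicit) justification: the corollary is stated there without proof precisely because the hypothesis makes condition (ii) of Theorem \ref{equivalence S topology and bornological} hold trivially via its first disjunct. Nothing further is needed.
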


If $d$ is an unbounded metric on $X$ and $\tau_{\mathcal{S},d}^+ = \mathcal{S}^+$, then by Theorem \ref{equivalence S topology and bornological}, each ball in $(X,d)$ is in $\mathcal{S}$. So whenever $\tau_{\mathcal{S},d}^+ = \mathcal{S}^+$ for an unbounded metric space $(X,d)$, then $\mathcal{B}_d(X) \subseteq \mathcal{S}$.  Moreover, in Proposition $19$ of \cite{Idealtopo}, it has been shown that whenever $(X,d)$ is unbounded and $\tau_{\mathcal{S},d}^+ = \mathcal{S}^+$, then $\mathcal{S}$ is stable under enlargements. The proof of this proposition uses the notion of $\mathcal{S}$-convergence to infinity. We next give a new proof of Proposition $19$ of \cite{Idealtopo} in the vein of Theorem \ref{equivalence S topology and bornological}.   

\begin{corollary}
	Let $(X,d)$ be an unbounded metric space and let $\mathcal{S}$ be a bornology on $X$. If $\tau_{\mathcal{S},d}^+ = \mathcal{S}^+$ on $CL(X)$, then $\mathcal{S}$ is stable under enlargements.  
\end{corollary}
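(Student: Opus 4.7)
The plan is to deduce from Theorem \ref{equivalence S topology and bornological} that $B_d(S,\epsilon) \in \mathcal{S}$ for every $S \in \mathcal{S}$ and every $\epsilon > 0$. The first move is to apply the theorem directly to $S$ with the constant functions $f \equiv \epsilon$ and $g \equiv \epsilon + 1$: since $\inf_{x \in S}(g(x)-f(x)) = 1 > 0$, one of the two alternatives in (ii) must hold. If $B_d(S, \epsilon) \in \mathcal{S}$, we are already done; the interesting branch is when $B_d(S, \epsilon + 1) = X$.

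Suppose we are in this branch. The unboundedness of $(X,d)$ forces $S$ itself to be unbounded, because if $S$ were contained in some $B_d(s_0, M)$ then $B_d(S, \epsilon + 1) \subseteq B_d(s_0, M + \epsilon + 1)$ would be bounded and hence could not equal the unbounded space $X$. The idea now is to trim $S$ to a subset that is uniformly far from a chosen point. Fix $s_0 \in S$, set $R = 2\epsilon + 2$, and define $S^* = S \setminus B_d(s_0, R - \epsilon)$. Unboundedness of $S$ guarantees $S^* \neq \emptyset$, and hereditariness of $\mathcal{S}$ gives $S^* \in \mathcal{S}$. By construction $d(s_0, S^*) \geq R - \epsilon = \epsilon + 2 > \epsilon + 1$, so $s_0 \notin B_d(S^*, \epsilon + 1)$, which means $B_d(S^*, \epsilon + 1) \neq X$. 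Applying Theorem \ref{equivalence S topology and bornological} a second time, now to $S^*$ with the same constants $f \equiv \epsilon$, $g \equiv \epsilon + 1$, yields $B_d(S^*, \epsilon) \in \mathcal{S}$.

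To finish, I would use the decomposition $S = (S \cap B_d(s_0, R - \epsilon)) \cup S^*$ together with the standard inclusion $B_d(B_d(s_0, R - \epsilon), \epsilon) \subseteq B_d(s_0, R)$ to obtain $B_d(S, \epsilon) \subseteq B_d(s_0, R) \cup B_d(S^*, \epsilon)$. Both summands lie in $\mathcal{S}$: the ball $B_d(s_0, R)$ does so by the observation preceding this corollary that $\mathcal{B}_d(X) \subseteq \mathcal{S}$ whenever $\tau_{\mathcal{S},d}^+ = \mathcal{S}^+$ on an unbounded $(X,d)$, while $B_d(S^*, \epsilon)$ does so by the previous step. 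Closure of $\mathcal{S}$ under finite unions together with hereditariness then delivers $B_d(S, \epsilon) \in \mathcal{S}$.

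The main obstacle is precisely the branch in which $B_d(S, \epsilon + 1) = X$, where Theorem \ref{equivalence S topology and bornological} applied to $S$ itself conveys no information (any admissible $g$ will cover all of $X$). Getting around this pivots on two observations: first, $\mathcal{B}_d(X) \subseteq \mathcal{S}$, which lets us freely absorb a bounded piece of $S$ without leaving $\mathcal{S}$; second, after removing this bounded portion the remainder $S^*$ possesses an explicit witness $s_0$ lying outside $B_d(S^*, \epsilon + 1)$, restoring the effectiveness of the theorem.
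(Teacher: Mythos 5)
Your proposal is correct and follows essentially the same route as the paper's proof: split on whether the constant enlargement is all of $X$, and in the bad case trim off a bounded piece around a base point so that the remainder $S^*$ has a proper enlargement, apply Theorem \ref{equivalence S topology and bornological} to $S^*$, and recover $B_d(S,\epsilon)$ as a subset of the union of $B_d(S^*,\epsilon)$ with a ball, both of which lie in $\mathcal{S}$. The only cosmetic difference is that the paper phrases the second branch as forcing $\mathcal{S}=\mathcal{P}_0(X)$, while you conclude $B_d(S,\epsilon)\in\mathcal{S}$ directly; the underlying argument is identical.
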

\begin{proof}
	Let $S_0 \in \mathcal{S}$ and $r > 0$. If $B_d(S_0, r+ \epsilon) \neq X$ for some $\epsilon > 0$, then by Theorem \ref{equivalence S topology and bornological}, $B_d(S_0, r) \in \mathcal{S}$. Otherwise, $B_d(S_0, r+ \epsilon)= X$ for each $\epsilon  > 0$. Let $x_0 \in S_0$, and $S' = S_0 \setminus B_d(x_0, 3r)$. Since $(X,d)$ is unbounded, $S' \neq \emptyset$. Further, note that $B_d(S',\frac{5r}{2}) \neq X$ as $d(x_0, S') \geq 3r$. Consequently, by Theorem \ref{equivalence S topology and bornological}, we have $B_d(S', 2r) \in \mathcal{S}$. Also $B_d(x_0, 5r) \in \mathcal{S}$. Then $B_d(S_0, 2r) \in \mathcal{S}$ as $B_d(S_0, 2r) \subseteq B_d(S', 2r) \cup B_d(x_0, 5r)$. And, by our assumption, $X = B_d(S_0, 2r)$. Hence $\mathcal{S} = \mathcal{P}_0(X)$.
\end{proof}

\begin{proposition}\label{suffbornlogicalTsd}
	Let $(X,d)$ be a metric space and let $\mathcal{S}$ be bornology on $X$. Suppose for any $S \in \mathcal{S}$ and $f\in \mathcal{Z}^+$ either $B_d(S,f) \in \mathcal{S}$ or $\overline{B_d(S,f)} = X$. Then $\tau_{\mathcal{S},d}^+ = \mathcal{S}^+$ on $CL(X)$.  
\end{proposition}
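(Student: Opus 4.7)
The plan is to prove the proposition using the miss-type characterizations: Theorem \ref{Miss theorem for S topology} for $\tau_{\mathcal{S},d}^+$-convergence and Theorem \ref{upperbornologicalrepresentation} for $\mathcal{S}^+$-convergence. Since $\tau_{\mathcal{S},d}^+ \Rightarrow \mathcal{S}^+$ is already available from Corollary \ref{allthreerelations}, only the reverse direction requires proof.

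First I would fix a net $(A_\lambda)$ in $CL(X)$ that $\mathcal{S}^+$-converges to $A$. Aiming at condition (i) of Theorem \ref{Miss theorem for S topology}, I would take $S \in \mathcal{S}$ and $f, g \in \mathcal{Z}^+$ with $r := \inf\{g(x) - f(x) : x \in S\} > 0$ and $A \cap B_d(S, g) = \emptyset$. The key geometric observation to establish is the inclusion $B_d(B_d(S,f), r/2) \subseteq B_d(S, g)$: if $y \in B_d(B_d(S,f), r/2)$, then $d(y,z) < r/2$ for some $z \in B_d(s, f(s))$ with $s \in S$, and the triangle inequality gives $d(y, s) < f(s) + r/2 \leq g(s) - r/2 < g(s)$.

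Next I would invoke the hypothesis, which forces a dichotomy on $B_d(S,f)$. If $\overline{B_d(S,f)} = X$, then every $x \in X$ satisfies $d(x, B_d(S,f)) = 0 < r/2$, so $B_d(B_d(S,f), r/2) = X$; combined with the inclusion above this gives $B_d(S,g) = X$ and hence $A = \emptyset$, contradicting $A \in CL(X)$. Thus the surviving case is $S' := B_d(S, f) \in \mathcal{S}$, and since $A \cap B_d(S', r/2) = \emptyset$, Theorem \ref{upperbornologicalrepresentation} applied to $S'$ and $r/2$ yields $A_\lambda \cap B_d(S, f) = \emptyset$ eventually. An appeal to Theorem \ref{Miss theorem for S topology} then completes the proof. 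The only delicate point in this argument, and the reason it goes through, is that the strict gap $r > 0$ built into the hypothesis of Theorem \ref{Miss theorem for S topology} is precisely what rules out the density alternative; no further obstacle is anticipated.
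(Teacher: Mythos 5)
Your proof is correct and follows essentially the same route as the paper: reduce to the miss-type characterizations (Theorems \ref{Miss theorem for S topology} and \ref{upperbornologicalrepresentation}), use the gap $r>0$ to show the density alternative would force $B_d(S,g)=X$ and hence $A=\emptyset$, and then apply the bornological miss condition to $S'=B_d(S,f)\in\mathcal{S}$. The only cosmetic difference is that the paper disposes of the degenerate case by assuming $A\neq X$ up front rather than deriving the contradiction from $A\neq\emptyset$; your added detail on the inclusion $B_d(B_d(S,f),r/2)\subseteq B_d(S,g)$ is exactly the step the paper leaves implicit.
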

\begin{proof}
	Suppose $(A_\lambda)$ is a net which $\mathcal{S}^+$-converges to $A$ in $CL(X)$. If $A = X$, then there is nothing to show. Otherwise, let $S \in \mathcal{S}$ and $f, g \in \mathcal{Z}^+$ with $\inf_{x \in S}(g(x) - f(x)) > 0$ such that $A \cap B_d(S,g) = \emptyset$. Then by the hypothesis, we get $B_d(S,f) \in \mathcal{S}$. Therefore, applying Theorem \ref{upperbornologicalrepresentation}, we get $A_\lambda \cap B_d(S,f) = \emptyset$ eventually. Thus, the coincidence $\tau_{\mathcal{S},d}^+ = \mathcal{S}^+$ follows from Theorem \ref{Miss theorem for S topology}. 
\end{proof}
The following example shows that the condition in Proposition \ref{suffbornlogicalTsd} need not be necessary for the coincidence of the convergences $\tau_{\mathcal{S},d}^+$ and $\mathcal{S}^+$. 

\begin{example}
	Let $X = [0,1] \cup E$ where $E = \cup_{p \in P}E_{p}$, $E_p = \{p^n: n \in \mathbb{N}\}$, and $P = \{p: p \text{ is prime}\}$. Suppose $d:X \times X \rightarrow \mathbb{R}$ is defined as $$d(x,y) = \begin{cases}
		2 & \text{ if } x,y \in [0,1] \text{ or } x \in [0,1], y \in E, x \neq y\\
		1+ \sum_{r = 1}^{ q}\frac{1}{2^r} & \text{ if }x \in E_p, y \in E_q, p \leq q, x \neq y\\
		0        &   \text{ if } x =y
	\end{cases}$$
	where $x,y \in X$. It is routine to verify that $d$ forms a metric on $X$. Define the collection $\mathcal{A} = \{E_p \cup F: p \in P, F \in \mathcal{F}(X)\}$. Then $\mathcal{A}$ forms a cover of $X$. By Proposition $11.2$ of \cite{beer2023bornologies}, $\mathcal{A}$ generates a bornology, say $\mathcal{B}(\mathcal{A})$ on $X$.
	
	First, we show that for $S \in \mathcal{B}(\mathcal{A})$ and $f, g \in \mathcal{Z}^+$ with $\inf_{x \in S}(g(x)-f(x)) > 0$, either $B_d(S,f) \in \mathcal{B}(\mathcal{A})$ or $B_d(S,g) = X$. Observe that any $S \in \mathcal{B}(\mathcal{A})$ is a subset of $\cup_{i =1}^n E_{p_i}\cup F$ for $p_i \in P$, $1 \leq i \leq n$ and $F \in \mathcal{F}([0,1])$. For $f \in \mathcal{Z}^+$ there are two possibilities: $\sup_{x \in S}f(x) < 2$ or $\sup_{x \in S}f(x) \geq 2$. When $\sup_{x \in S}f(x) < 2$, then we can find $p_0 \in P$ such that $p_0 > \max_{1 \leq i \leq n} p_i$ and $\sup_{x \in S}f(x) < 1+ \sum_{r = 1}^{ p}\frac{1}{2^r}~ \forall~ p \geq p_0$.  Therefore, $B_d(S,f) \subseteq \cup_{p \leq p_0}E_p \cup F \in \mathcal{B}(\mathcal{A})$. When $\sup_{x \in S}f(x) \geq 2$, then $g(x) > 2$ for some $x \in S$. So $B_d(S,g) = X$. Hence $\tau_{\mathcal{B}(\mathcal{A}), d}^+ = \mathcal{B}(\mathcal{A})^+$ follows by Theorem \ref{equivalence S topology and bornological}.
	
	Next, we show that the necessary condition in Proposition \ref{suffbornlogicalTsd} fails. Consider $E_2$ and $f \in \mathcal{Z}^+$ defined by $f(x = 2^n) = 2 - \frac{1}{n}$ for $ n \in \mathbb{N}$. We claim that $ E \subseteq B_d(E_2, f)$ and $\overline{B_d(E_2, f)} \neq X$. Let $y \in E_q$ for some $ q \in P$. Then there exists $n_0 \in \mathbb{N}$ such that $1+ \sum_{r = 1}^{ q}\frac{1}{2^r} < 2 - \frac{1}{n_0}$. Consequently, $y \in B_d(2^{n_0}, 2 - \frac{1}{n_0}) \subseteq B_d(E_2, f)$. So $ E \subseteq B_d(E_2, f)$. Therefore $B_d(E_2, f) \notin \mathcal{B}(\mathcal{A})$. Finally, note that for any $z \in [0,1]$, we have $ B_d(z, 1) \cap B_d(E_2, f) = \emptyset$. Hence $\overline{B_d(E_2, f)} \neq X$.\qed  
	
%	 Let $B \in \mathcal{B}$ and $f \in \mathcal{Z}^+$. Note that it  suffices to consider $B$ of the form, $B = E_p \cup F$ for some $p \in P$ and $F \in \mathcal{F}([0,1])$. If $\sup_{x \in B} f(x) < 2$, then we can find $p_0 \in P$ such that $\sup_{x \in B}f(x) < 1+ \sum_{r = 1}^{ q}\frac{1}{2^r}~ \forall~ p \geq p_0$. Therefore, $B_d(B,f) = \cup_{p \leq p_0}E_p \cup F \in \mathcal{B}$. When $\sup_{x \in  B}f(x) = 2$. Then either $f(x) = 2$ for some $x \in F$ or $\sup_{x\in E_p}f(x) = 2$. It is sufficient to consider the case $\sup_{x \in E_p}f(x) = 2$. We claim that $\cup_{p \in P}E_p \subseteq B_d(B,f)$. To see this, let $q \in P$ and $y \in E_q$. Assume $p < q$. Since $\sup_{x \in E_p}f(x) = 2$, there exists $x_q \in E_p$ such that $1 + \sum_{r=1}^{q}\frac{1}{2^r} < f(x_q)$. Therefore, $y \in B_d(x_q, f(x_q))$. So $E_q \subseteq B_d(B,f)$. In similar way, when $q < p$, $E_q \subseteq B_d(B,f)$. Consequently $\cup_{p \in P}E_p \subseteq B_d(B,f)$. Thus $B_d(B,f) \notin  \mathcal{B}$. Also, note that $B_d(B,f) \notin X$. As $[0,1] \subseteq B_d(B,f)$. Moreover, $\overline{B_d(B,f)} = B_d(B,f)$. However, for $g \in \mathcal{Z}^+$ with $\inf_{x \in B}(g(x) - f(x)) > 0$, we have $B_d(B,g) = X$.     
\end{example}

%The converse of Proposition \ref{suffbornlogicalTsd} holds in the realm of \textit{almost convex metric spaces}. 
   
\begin{theorem}
	Let $(X,d)$ be an almost convex metric space and let $\mathcal{S}$ be a bornology on $X$. Then the following statements are equivalent:
	\begin{enumerate}[(i)]
		\item for $S \in \mathcal{S}$ and $f \in \mathcal{Z}^+$ either $B_d(S,f) \in \mathcal{S}$ or $\overline{B_d(S,f)} = X$;
		\item $\tau_{\mathcal{S},d}^+ = \mathcal{S}^+$ on $CL(X)$. 
	\end{enumerate}
\end{theorem}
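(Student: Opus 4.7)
The plan is to treat the two implications separately. The direction $(i) \Rightarrow (ii)$ does not use the almost-convexity hypothesis and follows at once from Proposition \ref{suffbornlogicalTsd}; I will simply invoke that result. The substantive work lies in $(ii) \Rightarrow (i)$, where almost convexity plays a decisive role.

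For $(ii) \Rightarrow (i)$, I argue by contrapositive. Assume $\tau_{\mathcal{S},d}^+ = \mathcal{S}^+$ on $CL(X)$, and fix $S \in \mathcal{S}$ and $f \in \mathcal{Z}^+$ with $B_d(S,f) \notin \mathcal{S}$; the goal is to show $\overline{B_d(S,f)} = X$. The idea is to feed constant perturbations of $f$ into the characterization from Theorem \ref{equivalence S topology and bornological}. For each $\epsilon > 0$, define $g_\epsilon := f + \epsilon \in \mathcal{Z}^+$, so that $\inf_{x \in S}(g_\epsilon(x) - f(x)) = \epsilon > 0$. Theorem \ref{equivalence S topology and bornological} then forces $B_d(S, g_\epsilon) = X$ for every $\epsilon > 0$, since the alternative $B_d(S,f) \in \mathcal{S}$ is ruled out by assumption.

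Now fix $y \in X$ and $\epsilon > 0$. Since $B_d(S, g_\epsilon) = X$, there exists $x \in S$ with $y \in B_d(x, f(x) + \epsilon)$. At this point I invoke almost convexity applied to the nonempty set $A = \{x\}$, which gives $B_d(x, f(x) + \epsilon) = B_d(B_d(x, f(x)), \epsilon)$. Consequently there is $z \in B_d(x, f(x)) \subseteq B_d(S,f)$ with $d(z, y) < \epsilon$, so $y \in \overline{B_d(S,f)}$. Since $y$ and $\epsilon$ were arbitrary, $\overline{B_d(S,f)} = X$.

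I expect the main obstacle to be clarifying the exact role of almost convexity. Theorem \ref{equivalence S topology and bornological} alone yields only the weaker dichotomy ``either $B_d(S,f) \in \mathcal{S}$ or $B_d(S, f+\epsilon) = X$'', and the passage from the latter to the sharper statement $\overline{B_d(S,f)} = X$ genuinely requires converting an $\epsilon$-enlargement of the radii into an $\epsilon$-approximation of the points in $B_d(S,f)$ itself. The identity $B_d(B_d(A, \epsilon), r) = B_d(A, \epsilon + r)$ is precisely the geometric bridge that performs this conversion, and without some such hypothesis the implication $(ii) \Rightarrow (i)$ cannot be expected to hold in general.
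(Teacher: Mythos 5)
Your proof is correct and follows essentially the same route as the paper's: both directions rest on Proposition \ref{suffbornlogicalTsd} and on applying Theorem \ref{equivalence S topology and bornological} to constant enlargements $g = f + \epsilon$, with almost convexity supplying the identity $B_d(B_d(S,f),\epsilon) = B_d(S,f+\epsilon)$ that turns the radius enlargement into an $\epsilon$-approximation. The only difference is cosmetic: you argue directly that every $y \in X$ lies in $\overline{B_d(S,f)}$, whereas the paper fixes a single point $p$ outside the closure and derives a contradiction with one specific $r = \tfrac{1}{2}d(p,\overline{B_d(S,f)})$.
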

\begin{proof}
	It is enough to prove $(ii) \Rightarrow (i)$.
	Suppose $(i)$ fails. So there exist $S \in \mathcal{S}$ and $f \in \mathcal{Z}^+$ such that neither $B_d(S,f) \in \mathcal{S}$ nor $\overline{B_d(S,f)} = X$. Let $p \in X \setminus \overline{B_d(S,f)}$. Put $2r = d(p, \overline{B_d(S,f)})$. Consider $g \in \mathcal{Z}^+$ with $g(x) = f(x) + r  ~\forall~ x \in X$. Since $(X,d)$ is an almost convex metric space, $B_d(B_d(x, f(x)), r) = B_d(x, f(x) + r)$ for $x \in S$. Therefore, $B_d(B_d(S,f),r) = B_d(S,g)$. Consequently, $ p  \notin B_d(S,g)$. Thus $B_d(S,g) \neq X$. Which contradicts Theorem \ref {equivalence S topology and bornological}.    
\end{proof}

	If $\tau_{\mathcal{S},d}^+ = \mathcal{S}^+$, then we also have $\mathcal{S}^+ = \mathsf{G}_{\mathcal{S},d}^+$.  However, in general ($\mathcal{S}^+ = \mathsf{G}_{\mathcal{S},d}^+)  \nRightarrow   (\tau_{\mathcal{S},d}^+ = \mathcal{S}^+)$. This can  be seen by the following example. 
	\begin{example}
		Let $X = \mathbb{R}^2$ and $d = d_e$. Let $\mathcal{S}$ be the bornology defined as $\mathcal{S} = \{S \subseteq \mathbb{R} \times [-n,n]: n \in \mathbb{N}\}$. It follows from Example $20$ of \cite{Idealtopo} that $\tau_{\mathcal{S},d}^+ \neq \mathcal{S}^+$ on $CL(X)$.  Since for any $S \in \mathcal{S}$ and $r > 0$, $\overline{B_d}(S,r) = \{x \in X: d(x, S) \leq r\} \in \mathcal{S}$, by Theorem $3$ of \cite{gapexcess}, $\mathsf{G}_{\mathcal{S},d}^+ = \mathcal{S}^+$ on $CL(X)$. 
%		 Next we show that there is a $S_0 \in \mathcal{S}$ and $f,g \in \mathcal{Z}^+$ with $\inf_{x \in S}(g(x)- f(x)) > 0$ such that neither $B_d(S_0,f) \in \mathcal{S}$ nor $B_d(S_0, g) = X$. Then by Theorem \ref{equivalence S topology and bornological}, $\tau_{\mathcal{S},d}^+ \neq \mathcal{S}^+$ on $CL(X)$. 
%		
%		To see this, consider $S_1 = \mathbb{R} \times [-1,1]$ and $f,g \in \mathcal{Z}^+$ defined as $f((x,y)) = \frac{\vert x \vert}{4} + \vert y \vert + \frac{1}{2}$ and $g((x,y)) = \frac{\vert x \vert}{4} + \vert y \vert + 1$ for $(x,y) \in X$. Observe that for any $n \in \mathbb{N}$, $d((4n, 1), (4n, n +1)) < f((4n,1))$. So for any $n \in \mathbb{N}$, $B_d(S_1, f) \nsubseteq \mathbb{R} \times [-n,n]$. Consequently, $B_d(S_1, f) \notin \mathcal{S}$. Next we show that $B_d(S_1,g) \neq \mathbb{R}^2$. Then $(0,5) \notin B_d(S_1, g)$ as for any $(x,y) \in S_1$, we have $$d^2((x,y), (0,5)) =  x^2 + (y - 5)^2 \geq x^2 + 16 > \left(\frac{\vert x \vert}{4} + 2\right)^2 \geq g^2((x,y)).\qed$$  
	\end{example}

%\begin{example}
%	Suppose $(X,d) = (\mathbb{N}, d_u)$ where $d_u$ denotes the usual metric. Let $\mathcal{S}$ be the bornology on $X$ given by the base $\mathcal{B}_0 = \{E \cup F:  F \in \mathcal{F}(X)\}$, where $E$ denotes the set of all even natural numbers. Clearly, for each $S \in \mathcal{S}$ and $\epsilon > 0$, the closed enlargement $\{x \in X: d(x, S) \leq \epsilon\}$ of $S$ either lies in  $\mathcal{S}$ or is $X$. Consequently, by Theorem $3$ of \cite{gapexcess}, we have $\mathcal{S}^+ = \mathsf{G}_{\mathcal{S},d}^+$.  However, for $f, g \in \mathcal{Z}^+$ defined as, 
%	$$ f(x) = \begin{cases}
%		\frac{1}{2} \ \ \text {if}~ x = 2^m ~\text{for }m \in \mathbb{N}  \\
%		
%		\frac{3}{2} \ \ \text{otherwise}
%	\end{cases} \text{ and } g(x) =  \begin{cases}
%	1 \ \ \text {if}~ x = 2^m ~\text{for }m \in \mathbb{N}  \\
%	
%	2 \ \ \text{otherwise}
%\end{cases}$$
%neither $B_d(E,f)$  lies in $\mathcal{S}$ nor $B_d(E,g) = X$. Consequently, $\tau_{\mathcal{S},d}^+ \neq \mathcal{S}^+$ by Theorem \ref{equivalence S topology and bornological}.\qed	
%\end{example}

	Now as applications of Theorem \ref{equivalence S topology and bornological}, we deduce the coincidence $\tau_{\mathcal{S},d}^+ = \mathcal{S}^+$ for some special bornologies. In particular, we consider the relation of $\mathcal{S}^+$-convergence with $\tau_{\mathcal{S},d}^+$-convergence for $\mathcal{S} = \mathcal{F}(X),~ \mathcal{K}(X)$, $\mathcal{T}\mathcal{B}_d(X)$, $\mathcal{B}_d(X)$, and $\mathcal{P}_0(X)$. Note that in each of these special cases, we get different $\mathcal{S}^+$-convergences. However, when $\mathcal{S} = \mathcal{F}(X)$, $\mathcal{K}(X)$, and $\mathcal{T}\mathcal{B}_d(X)$, we have $\tau_{\mathcal{S},d}^+  = \tau_{W_{d}}^+$. So in these three cases, we actually deduce the equivalence $\tau_{W_d}^+ = \mathcal{S}^+$ using Theorem \ref{equivalence S topology and bornological}. We would like to mention here that in Theorem 1 of \cite{gapexcess}, Beer and Levi have proved a much more general result where they have compared the $\tau_{W_{d}}^+$-convergence to $\mathcal{S}^+$-convergence for an arbitrary bornology $\mathcal{S}$.

	%For $\mathcal{S}= \mathcal{B}_d(X)$ or $\mathcal{P}_0(X)$, we have $\tau_{\mathcal{S},d}^+ = \mathcal{S}^+$ (\cite{ToCCoS}) \textbf{cite}. These facts also follow directly from Proposition \ref{Bounded set} and Theorem \ref{equivalence S topology and bornological}.        
	
	\begin{corollary} $($\cite{ToCCoS}$)$
		Let $(X,d)$ be a metric space. Then the following statements hold. 
		\begin{enumerate}[(i)]
			
			\item $\tau_{AW_d}^+$-convergence $= \mathcal{B}_d^+(X)$-convergence on $CL(X)$;
			
			\item $\tau_{H_{d}}^+$-convergence = $\mathcal{P}_0^+(X)$-convergence on $CL(X)$.
		\end{enumerate} 
	\end{corollary}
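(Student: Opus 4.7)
The plan is to deduce both equivalences as direct applications of Theorem \ref{equivalence S topology and bornological}. Given that theorem, it suffices to verify, for the specific bornology in each case, the structural condition: for every $S \in \mathcal{S}$ and every $f,g \in \mathcal{Z}^+$ with $\inf\{g(x)-f(x):x \in S\} > 0$, either $B_d(S,f) \in \mathcal{S}$ or $B_d(S,g) = X$.

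For part (i), with $\mathcal{S} = \mathcal{B}_d(X)$, I would fix $S \in \mathcal{B}_d(X)$ and $f,g \in \mathcal{Z}^+$ as above, and invoke Proposition \ref{Bounded set} (applied to $f$) to conclude that $B_d(S,f)$ is either bounded or equal to $X$. In the first case, $B_d(S,f) \in \mathcal{B}_d(X)$ directly. In the second, the monotonicity $B_d(S,g) \supseteq B_d(S,f)$ (which holds because $g(x) > f(x)$ on $S$) forces $B_d(S,g) = X$. Hence condition (ii) of Theorem \ref{equivalence S topology and bornological} is satisfied, and the coincidence $\tau_{AW_d}^+ = \mathcal{B}_d(X)^+$ on $CL(X)$ follows.

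For part (ii), with $\mathcal{S} = \mathcal{P}_0(X)$, the verification is immediate: for any nonempty $S$ and any $f \in \mathcal{Z}^+$, the enlargement $B_d(S,f)$ contains $S$ and is therefore a nonempty subset of $X$, i.e.\ a member of $\mathcal{P}_0(X)$. Thus condition (ii) of Theorem \ref{equivalence S topology and bornological} holds trivially, and $\tau_{H_d}^+ = \mathcal{P}_0(X)^+$ on $CL(X)$.

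There is no genuine obstacle in this proof; the substantive work has already been done in Theorem \ref{equivalence S topology and bornological} and Proposition \ref{Bounded set}. The only point requiring any care is ensuring that the ``dichotomy'' produced by Proposition \ref{Bounded set} for the single function $f$ is compatible with the two-function formulation of the criterion, which is handled by the monotonicity observation above.
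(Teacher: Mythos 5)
Your proposal is correct and follows essentially the same route as the paper: Proposition \ref{Bounded set} plus Theorem \ref{equivalence S topology and bornological} for part (i), and the trivial verification for $\mathcal{P}_0(X)$ in part (ii). The only difference is that you spell out the monotonicity step reconciling the single-function dichotomy with the two-function condition, which the paper leaves implicit.
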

	
	\begin{proof}
		$(i)$. Let $B \in \mathcal{B}_d(X)$, and $f \in \mathcal{Z}^+$. Then by Proposition \ref{Bounded set}, either $B_d(B,f) \in \mathcal{B}_d(X)$ or $B_d(B,f) = X$. Therefore the result follows from Theorem \ref{equivalence S topology and bornological}. 
		
		$(ii)$. It is immediate from Theorem \ref{equivalence S topology and bornological}.	\end{proof}
	
	 In order to examine other cases, we first prove a following general result which can be inferred from Theorem $8$ of \cite{Idealtopo}. For the readers convenience and sake of completeness we record its proof below. 
	
	%It is to be noted that $\tau_{\mathcal{S},d}^+  = \tau_{W_{d}}^+$ for $\mathcal{S} = \mathcal{F}(X)$, $\mathcal{K}(X)$, and $\mathcal{T}\mathcal{B}_d(X)$, whereas $\mathcal{S}^+$-convergence become different in all three situations. Moreover, $\mathcal{S}^+$-convergence do not coincide with $\tau_{W_{d}}^+$ for these cases. 

	%Using Theorem $8$ of \cite{Idealtopo} one may infer the following lemma. However, we here give a direct proof for completeness.  
	\begin{lemma}\label{upper S topology diff bornology}
		Let $(X,d)$ be a metric space and let $\mathcal{S}$ be a bornology on $X$. If $\mathcal{S} \subseteq \mathcal{T}\mathcal{B}_{d}(X)$, then $\tau_{\mathcal{S},d}^+ = \tau_{W_{d}}^+$ on $CL(X)$. 
	\end{lemma}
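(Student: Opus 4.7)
The plan is to establish the two inclusions separately. The inclusion $\tau_{W_d}^+ \subseteq \tau_{\mathcal{S},d}^+$ is immediate, because any bornology on $X$ must contain $\mathcal{F}(X)$ (it covers $X$, is hereditary, and is closed under finite unions), so enlarging the test family from $\mathcal{F}(X)$ to $\mathcal{S}$ only strengthens the uniform-convergence requirement on distance functionals.

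For the nontrivial inclusion $\tau_{\mathcal{S},d}^+ \subseteq \tau_{W_d}^+$, I would take a net $(A_\lambda)$ in $CL(X)$ converging to $A$ in $\tau_{W_d}^+$ and verify the miss-type criterion of Theorem \ref{Miss theorem for S topology}. Fix $S \in \mathcal{S}$ and $f, g \in \mathcal{Z}^+$ with $r := \inf_{x\in S}(g(x) - f(x)) > 0$ and $A \cap B_d(S, g) = \emptyset$; the goal is to show $A_\lambda \cap B_d(S, f) = \emptyset$ eventually. Since $S \in \mathcal{T}\mathcal{B}_d(X)$, I would choose a finite $(r/3)$-net $\{x_1, \ldots, x_n\} \subseteq S$ for $S$. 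The hypothesis $A \cap B_d(S, g) = \emptyset$ yields $d(x_i, A) \geq g(x_i) > 0$ for each $i$, so Corollary \ref{wijsman equivalence} applied at each $x_i$, together with a single common refinement for the finite index set, produces a $\lambda_0$ such that $d(x_i, A_\lambda) \geq d(x_i, A) - r/3$ for all $i$ and all $\lambda \geq \lambda_0$ (with the inequality being trivial for those $i$ with $d(x_i, A) \leq r/3$).

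To close, I would invoke the $1$-Lipschitz property of distance functionals twice: for any $x \in S$, pick $x_i$ with $d(x, x_i) < r/3$ and chain the bounds
\[
d(x, A_\lambda) \geq d(x_i, A_\lambda) - d(x, x_i) \geq d(x_i, A) - \tfrac{2r}{3} \geq d(x, A) - r \geq g(x) - r \geq f(x),
\]
so $A_\lambda \cap B_d(x, f(x)) = \emptyset$ for every $x \in S$, giving $A_\lambda \cap B_d(S, f) = \emptyset$ for $\lambda \geq \lambda_0$. The main conceptual obstacle is that $f$ and $g$ are completely arbitrary positive functions with no continuity or boundedness, so one cannot localize their values by comparing $f(x)$ to $f(x_i)$ or $g(x)$ to $g(x_i)$ at nearby points of the net. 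The device that sidesteps this is to route every estimate through the $1$-Lipschitz functional $d(\cdot, A)$, applying the raw bound $d(x, A) \geq g(x)$ only once at the point $x$ itself rather than at the net points $x_i$, so that total boundedness plus equicontinuity of the family $\{d(\cdot, A_\lambda)\}$ does all the work of converting pointwise control at $\{x_1,\ldots,x_n\}$ into uniform control on $S$.
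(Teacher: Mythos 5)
Your proof is correct and rests on exactly the same engine as the paper's: total boundedness of $S$ yields a finite $\epsilon$-net, $\tau_{W_d}^+$-convergence gives eventual pointwise control of $d(\cdot,A)-d(\cdot,A_\lambda)$ at those finitely many points, and the $1$-Lipschitz property of distance functionals transfers this to uniform control on all of $S$. The only cosmetic difference is that the paper verifies the definition of $\tau_{\mathcal{S},d}^+$-convergence directly (showing $d(x,A) < d(x,A_\lambda)+\epsilon$ on $S$), whereas you verify the equivalent miss-type criterion of Theorem \ref{Miss theorem for S topology}, which costs a few extra lines but changes nothing essential.
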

	
	\begin{proof}
		Suppose $(A_\lambda)$ is a net in $CL(X)$ that $\tau_{W_{d}}^+$-converges to $A \in CL(X)$. We show that the net $(A_\lambda)$ is $\tau_{\mathcal{S},d}^+$-convergent to $A$. Take $S \in \mathcal{S}$ and $\epsilon > 0$. Choose $F \in \mathcal{F}(X)$ such that $S \subseteq B_d(F, \frac{\epsilon}{4})$. By assumption, there is a $\lambda_0$ such that for all $\lambda \geq \lambda_0$ and $y \in F$, we have $d(y, A) - d(y, A_\lambda) < \frac{\epsilon}{4}$. Then for any $x \in S$ and $\lambda \geq \lambda_0$ we have $d(x, A) < d(x, A_\lambda) + \epsilon$.  
	\end{proof}
		\begin{proposition}\label{closedballresult}
		Let $(X,d)$ be a metric space and let $\mathcal{S}$ be a bornology on $X$ such that $\mathcal{S} \subseteq \mathcal{T}\mathcal{B}_d(X)$. Then the following statements are equivalent: 
		\begin{enumerate}[(i)]
			\item each proper closed ball is in $\mathcal{S}$;
			\item for $S \in \mathcal{S}$ and $f,g \in \mathcal{Z}^+$ with $\inf_{x \in S}(g(x) -f(x)) > 0$ whenever $B_d(S,g) \neq X$, we have $B_d(S,f) \in \mathcal{S}$. 
		\end{enumerate}
	\end{proposition}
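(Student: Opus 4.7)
For the direction $(ii) \Rightarrow (i)$, I would take a proper closed ball $\overline{B_d}(x_0, R) \subsetneq X$, pick $p \in X \setminus \overline{B_d}(x_0, R)$, and set $\delta = (d(p, x_0) - R)/2 > 0$. With $S = \{x_0\} \in \mathcal{S}$ and the constant functions $f \equiv R + \delta$ and $g \equiv R + 2\delta$, the hypotheses of (ii) are met: $\inf_{x \in S}(g(x) - f(x)) = \delta > 0$, and $B_d(S, g) = B_d(x_0, d(p, x_0))$ is not all of $X$ since $p$ is excluded. So (ii) yields $B_d(x_0, R + \delta) \in \mathcal{S}$, and hereditary closure then gives $\overline{B_d}(x_0, R) \in \mathcal{S}$.

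For $(i) \Rightarrow (ii)$, fix $S \in \mathcal{S}$ and $f, g \in \mathcal{Z}^+$ with $r := \inf_{x \in S}(g(x) - f(x)) > 0$ and $B_d(S, g) \neq X$, and choose a witness $p \in X \setminus B_d(S, g)$. The plan is to cover $B_d(S, f)$ by a finite union of proper closed balls; each such ball will lie in $\mathcal{S}$ by (i), and the bornology axioms (closure under finite unions and hereditary closure) then give $B_d(S, f) \in \mathcal{S}$. The construction uses $\mathcal{S} \subseteq \mathcal{T}\mathcal{B}_d(X)$: fix $\epsilon \in (0, r/4)$ and a finite $\epsilon$-net $\{y_1, \ldots, y_n\} \subseteq S$ for $S$. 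For each index $i$, set $C_i := \overline{B_d}(y_i, d(p, y_i) - r/4)$; since $y_i \in S$, the inequality $d(p, y_i) \geq g(y_i) > r$ shows that the radius is positive and $p \notin C_i$, so each $C_i$ is a proper closed ball.

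To check $B_d(S, f) \subseteq \bigcup_i C_i$, for $y \in B_d(S, f)$ pick $s \in S$ with $d(y, s) < f(s)$ and $y_i$ with $d(s, y_i) < \epsilon$; combining $d(y, y_i) < f(s) + \epsilon$ with $d(p, y_i) \geq g(s) - \epsilon \geq f(s) + r - \epsilon$ and $\epsilon < r/4$ yields $d(y, y_i) < d(p, y_i) - r/4$, so $y \in C_i$. The main obstacle is calibrating the radii of the covering balls: a uniform choice such as $\sup_{x \in S} f(x) + \epsilon$ would easily cover $B_d(S, f)$ but does not guarantee properness, since there is no lower bound on $d(p, y_i)$ in terms of $\sup_{x \in S} f(x)$. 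Letting the radius depend on $d(p, y_i)$ with the buffer $r/4$ exploits both the $r$-gap between $f$ and $g$ and $p$'s individual distance to each net point, which is precisely what makes properness and coverage hold simultaneously.
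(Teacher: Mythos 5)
Your proof is correct, and while your $(ii)\Rightarrow(i)$ direction matches the paper's (pick a witness point $p$ outside the proper closed ball, apply $(ii)$ to the singleton $\{x_0\}$ with two constant enlargement functions separated by $\delta=(d(p,x_0)-R)/2$, then use heredity), your $(i)\Rightarrow(ii)$ direction takes a genuinely different and more self-contained route. The paper first invokes Lemma \ref{upper S topology diff bornology} to get $\tau_{\mathcal{S},d}^+=\tau_{W_d}^+\subseteq\mathsf{G}_{\mathcal{S},d}^+$, hence $\tau_{\mathcal{S},d}^+=\mathsf{G}_{\mathcal{S},d}^+$, then applies Theorem \ref{gap and Stopology equivaalence} to conclude that $B_d(S,f)$ is strictly $(\mathcal{S}-d)$ included in $B_d(S,g)$ via auxiliary sets $S_1,\dots,S_n\in\mathcal{S}$, and only then uses total boundedness of each $S_i$ to replace $B_d(S_i,\alpha_i)$ by a finite union of proper closed balls. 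You instead bypass the topological coincidence entirely: you take a finite $\epsilon$-net $\{y_1,\dots,y_n\}$ of $S$ itself and cover $B_d(S,f)$ directly by the proper closed balls $\overline{B_d}(y_i,d(p,y_i)-r/4)$, with the variable radii calibrated against the witness $p\in X\setminus B_d(S,g)$ so that both properness and coverage follow from the uniform gap $r$ between $g$ and $f$. Your estimates check out (one needs $2\epsilon\leq 3r/4$, which $\epsilon<r/4$ guarantees), and the positivity of each radius follows from $d(p,y_i)\geq g(y_i)>r$. What the paper's route buys is economy given the machinery already in place and a demonstration that the proposition is an instance of the strict-inclusion framework; what yours buys is an elementary, purely metric argument that a reader could follow without Theorem \ref{gap and Stopology equivaalence} or Lemma \ref{upper S topology diff bornology}, and that uses only the total boundedness of $S$ rather than of the auxiliary sets produced by the strict inclusion.
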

	
	\begin{proof}$(i) \Rightarrow (ii)$. Let $S \in \mathcal{S}$ and $f,g \in \mathcal{Z}^+$ with $\inf_{x \in S}(g(x)- f(x)) > 0$ such that $B_d(S,g) \neq X$. By Lemma \ref{upper S topology diff bornology}, we have $\tau_{\mathcal{S},d}^+ = \tau_{W_{d}}^+ \subseteq \mathsf{G}_{\mathcal{S},d}^+$. So $\tau_{\mathcal{S},d}^+ = \mathsf{G}_{\mathcal{S},d}^+$. Then by Theorem \ref{gap and Stopology equivaalence}, we have $B_d(S,f)$ is strictly $(\mathcal{S}-d)$ included in $B_d(S,g)$. Let $S_1,\ldots, S_n \in \mathcal{S}$ and $0 < \alpha_i < \epsilon_i$ for $1\leq i\leq n$ such that $$B_d(S,f) \subseteq \cup_{i = 1}^n B_d(S_i, \alpha_i)\subseteq \cup_{i = 1}^n B_d(S_i, \epsilon_i) \subseteq B_d(S,g).$$  Choose $0 < r_i < \epsilon_i - \alpha_i$ for $ 1 \leq i \leq n$. Since each $S_i$ is totally bounded, there exist $x_{i_1},.., x_{i_k} \in S_i$ such that $S_i \subseteq \cup_{j=1}^{k}B_d(x_{i_j},r_i)$. Consequently, $$B_d(S_i, \alpha_i) \subseteq \cup_{j = 1}^{k}B_d(x_{i_j}, \alpha_i + r_i) \subseteq B_d(S_i, \epsilon_i).$$ By the hypothesis, $\cup_{j = 1}^{k}B_d(x_{i_j}, \alpha_i + r_i) \in \mathcal{S}$. Thus, $B_d(S_i, \alpha_i) \in \mathcal{S}$ for each $i=1,\ldots,n$. Hence $B_d(S,f) \in \mathcal{S}$.
		   
		$(ii)\Rightarrow (i)$. Let $x \in X$ and $\epsilon > 0$ be such that $\overline{B_d}(x, \epsilon) \neq X$. Then there exists a $y \in X$ such that $d(x, y) > \epsilon$. Put $d(x,y) - \epsilon = 3r$. So $B_d(x, \epsilon + 3r) \neq X$. By the hypothesis, we have $B_d(x, \epsilon +2r) \in \mathcal{S}$. Hence $\overline{B_d}(x, \epsilon) \in \mathcal{S}$.   
	\end{proof}
	
	\begin{corollary}\label{compactsetbornology} $($\cite{ToCCoS}$)$
		Let $(X,d)$ be a metric space. Then 
%\begin{enumerate}[$(i)$]
	  $\tau_{W_{d}}^+$-convergence = $\mathcal{K}(X)^+$-convergence if and only if $(X,d)$ have nice closed balls (that is, each proper closed ball is compact). 
	%\item for each $K \in \mathcal{K}(X)$ and $f \in \mathcal{Z}^+$, either $B_d(K,f) \in \mathcal{K}(X)$, or $\overline{B_d(K,f)} = X$;
	%\item $\tau_{W_{d}}^+$-convergence = $\mathcal{K}(X)^+$-convergence;
	%\item $\tau_{W_{d}} = \tau_{F}$. 
%\end{enumerate}
	\end{corollary}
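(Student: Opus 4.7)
The plan is to chain together three results established earlier in this section. Since $\mathcal{K}(X) \subseteq \mathcal{T}\mathcal{B}_d(X)$, Lemma \ref{upper S topology diff bornology} gives $\tau_{\mathcal{K}(X),d}^+ = \tau_{W_d}^+$ on $CL(X)$. Thus the equivalence $\tau_{W_d}^+$-convergence $= \mathcal{K}(X)^+$-convergence is the same as $\tau_{\mathcal{K}(X),d}^+$-convergence $= \mathcal{K}(X)^+$-convergence, which puts us squarely in the setup of Theorem \ref{equivalence S topology and bornological}.

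Applying Theorem \ref{equivalence S topology and bornological} with $\mathcal{S} = \mathcal{K}(X)$, the coincidence is equivalent to the following stability condition: for every $S \in \mathcal{K}(X)$ and $f, g \in \mathcal{Z}^+$ with $\inf_{x \in S}(g(x) - f(x)) > 0$, either $B_d(S,f) \in \mathcal{K}(X)$ or $B_d(S,g) = X$. Since $\mathcal{K}(X) \subseteq \mathcal{T}\mathcal{B}_d(X)$, Proposition \ref{closedballresult} says this stability condition is in turn equivalent to the statement that every proper closed ball of $(X,d)$ lies in $\mathcal{K}(X)$.

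Finally, a closed ball is a closed set, so it belongs to $\mathcal{K}(X)$ (that is, has compact closure) precisely when it is itself compact. Therefore the last condition is exactly the assertion that $(X,d)$ has nice closed balls, completing the equivalence.

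The main content has already been packaged into Lemma \ref{upper S topology diff bornology}, Theorem \ref{equivalence S topology and bornological}, and Proposition \ref{closedballresult}; the only step requiring care is the initial reduction via Lemma \ref{upper S topology diff bornology}, since one must justify that the equivalence of convergences on $CL(X)$ transfers from $\tau_{W_d}^+$ to $\tau_{\mathcal{K}(X),d}^+$. This is immediate once those two topologies are identified on $CL(X)$, so no serious obstacle arises.
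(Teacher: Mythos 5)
Your proposal is correct and follows exactly the paper's own route: the paper's proof is precisely the chain $\mathcal{K}(X)\subseteq\mathcal{T}\mathcal{B}_d(X)$, Lemma \ref{upper S topology diff bornology}, Proposition \ref{closedballresult}, and Theorem \ref{equivalence S topology and bornological}, together with the observation that a closed ball is relatively compact iff it is compact. No discrepancies to report.
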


\begin{proof}
Since $\mathcal{K}(X)\subseteq \mathcal{T}\mathcal{B}_d(X)$, the result follows from Lemma \ref{upper S topology diff bornology}, Proposition \ref{closedballresult} and Theorem \ref{equivalence S topology and bornological}.
 \end{proof}
%Evidently, for a normed linear space $X$, $\tau_{W_{d}} = \tau_F$ on $CL(X)$ hold if and only if $X$ is finite dimensional. 

By Corollary $3.3$ of \cite{BCL}, we have $s(X)^+ = \mathcal{F}(X)^+$, where $s(X)$ = collection of all singletons in $X$. 

	\begin{corollary} $($Proposition $22$, \cite{Idealtopo}$)$
	Let $(X,d)$ be a metric space. The following statements are equivalent: 
	\begin{enumerate}[(i)]
		\item $\mathcal{F}(X)^+$-convergence ensures $\tau_{W_{d}}^+$-convergence on $CL(X)$;
		\item either each closed ball in $(X,d)$ is finite or $(X,d)$ is bounded.
	\end{enumerate} 
\end{corollary}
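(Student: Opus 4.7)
My plan is to mimic the one-line proof of Corollary~\ref{compactsetbornology}, assembling Lemma~\ref{upper S topology diff bornology}, Theorem~\ref{equivalence S topology and bornological}, and Proposition~\ref{closedballresult} in succession. The driving observation is that $\mathcal{F}(X) \subseteq \mathcal{T}\mathcal{B}_d(X)$, since every finite set is trivially $d$-totally bounded. Lemma~\ref{upper S topology diff bornology} then gives $\tau_{\mathcal{F}(X),d}^+ = \tau_{W_d}^+$ on $CL(X)$, while Corollary~\ref{allthreerelations} gives $\tau_{\mathcal{S},d}^+ \supseteq \mathcal{S}^+$ in general. Together these reduce statement~(i) to the coincidence $\tau_{\mathcal{F}(X),d}^+ = \mathcal{F}(X)^+$ on $CL(X)$.

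Next, I would apply Theorem~\ref{equivalence S topology and bornological} to convert this coincidence into the enlargement condition of its clause~(ii) for $\mathcal{S}=\mathcal{F}(X)$, and then use Proposition~\ref{closedballresult} (available precisely because $\mathcal{F}(X) \subseteq \mathcal{T}\mathcal{B}_d(X)$) to rephrase that enlargement condition as the cleaner ball condition: every proper closed ball of $(X,d)$ lies in $\mathcal{F}(X)$, that is, every proper closed ball of $(X,d)$ is finite. This is exactly the analogue, in the finite-set setting, of the ``proper closed balls are compact'' statement obtained in Corollary~\ref{compactsetbornology} for $\mathcal{K}(X)$.

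The final step is to reconcile the proper-closed-ball characterization with the disjunction~(ii). In one direction, if every proper closed ball is finite and $X$ is unbounded, then no closed ball of finite radius can equal $X$, so every closed ball is automatically proper and hence finite (first disjunct); if $X$ is bounded, the second disjunct holds for free. Conversely, each disjunct of (ii) must produce the proper-closed-ball condition: the first disjunct does so trivially, while under the second disjunct one uses the fact that once $X$ is bounded, $X$ itself is a closed ball, which interacts with the proper-ball quantifier to yield the claim.

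The main obstacle I anticipate is precisely this last translation --- the reverse direction in the bounded case. The forward implication from the ball characterization to~(ii) is a clean case-split on whether $X$ is bounded, but verifying the converse when $X$ is bounded requires careful tracking of how the non-proper closed ball $X$ is excluded from the quantifier and how this meshes with the enlargement condition delivered by Proposition~\ref{closedballresult}. Getting this interaction exactly right is the delicate point of the argument.
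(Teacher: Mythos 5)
Your reduction is the right one and matches what the paper does for the neighbouring corollaries: since $\mathcal{F}(X)\subseteq\mathcal{T}\mathcal{B}_d(X)$, Lemma \ref{upper S topology diff bornology} gives $\tau_{\mathcal{F}(X),d}^+=\tau_{W_{d}}^+$, Corollary \ref{allthreerelations} supplies the reverse implication of (i) for free, and Theorem \ref{equivalence S topology and bornological} together with Proposition \ref{closedballresult} converts statement (i) into: \emph{every proper closed ball of $(X,d)$ is finite}. Up to that point your argument is correct and is exactly the route the paper intends.

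The gap is the final translation, and it is not merely delicate --- it fails. The implication ``$(X,d)$ bounded $\Rightarrow$ every proper closed ball is finite'' is false: take $X=[0,1]$ with the usual metric, which is bounded, yet $\overline{B_d}(0,\tfrac{1}{4})=[0,\tfrac{1}{4}]$ is an infinite proper closed ball. Correspondingly (i) fails there: with $A_n=\{\tfrac{1}{n}\}$ and $A=\{1\}$, the sequence is $\mathcal{F}(X)^+$-convergent to $A$ (a finite set meets $\{\tfrac{1}{n}\}$ for only finitely many $n$, so $A_n\cap S$ is eventually empty), but $d(0,A)-d(0,A_n)=1-\tfrac{1}{n}$ is not eventually less than $\tfrac{1}{2}$, so $\tau_{W_{d}}^+$-convergence fails. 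Hence no amount of care about ``how the non-proper ball $X$ interacts with the quantifier'' will rescue the bounded case of (ii)$\Rightarrow$(i). What your machinery actually proves is the clean statement parallel to Corollaries \ref{compactsetbornology} and \ref{totallyboundedbornology}: (i) holds if and only if every proper closed ball of $(X,d)$ is finite. That condition implies the printed disjunction (when $X$ is unbounded every closed ball is proper), but it is strictly stronger than the disjunction, so you should stop at the proper-closed-ball characterization rather than attempt to derive the disjunction as an equivalent form; as printed, clause (ii) is not equivalent to clause (i).
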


Our next corollary also follows from Lemma \ref{upper S topology diff bornology}, Proposition \ref{closedballresult} and Theorem \ref{equivalence S topology and bornological}. However, we give a direct proof without using Proposition \ref{closedballresult}. The proof of $(ii)\Rightarrow (i)$ in the following corollary can be imitated to give an alternative proof of $(1) \Rightarrow (2)$ in Theorem 1 of \cite{gapexcess}.  	
	
	\begin{corollary}\label{totallyboundedbornology}
		Let $(X,d)$ be a metric space. Then the following statements are equivalent:
		\begin{enumerate}[(i)]
			\item $\mathcal{T}\mathcal{B}_{d}(X)^+$-convergence ensures $\tau_{W_{d}}^+$-convergence on $CL(X)$;
			\item each proper closed ball in $(X,d)$ is totally bounded.
		\end{enumerate}
	\end{corollary}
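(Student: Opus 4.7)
The plan is to prove both directions directly from the miss-type characterizations (Corollary \ref{wijsman equivalence} for $\tau_{W_d}^+$ and Theorem \ref{upperbornologicalrepresentation} for $\mathcal{T}\mathcal{B}_d(X)^+$), bypassing Proposition \ref{closedballresult} as the authors indicate. The guiding observation is that total boundedness of proper closed balls is exactly what converts upper bornological miss information into Wijsman (pointwise) miss information.

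For $(ii) \Rightarrow (i)$, suppose $(A_\lambda)$ in $CL(X)$ is $\mathcal{T}\mathcal{B}_d(X)^+$-convergent to $A$, and fix $x \in X$ together with $0 < \alpha < \epsilon$ such that $A \cap B_d(x, \epsilon) = \emptyset$. Since $A$ is nonempty and misses $B_d(x, \epsilon)$, the closed ball $\overline{B_d}(x, \alpha)$ must be a proper subset of $X$ (otherwise any $a \in A$ would satisfy $d(a, x) \leq \alpha < \epsilon$); hypothesis $(ii)$ then forces it to be totally bounded, so $B_d(x, \alpha) \in \mathcal{T}\mathcal{B}_d(X)$. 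Applying $\mathcal{T}\mathcal{B}_d(X)^+$-convergence with $S = B_d(x, \alpha)$ and tolerance $\epsilon - \alpha$ yields $A_\lambda \cap B_d(x, \alpha) \subseteq B_d(A, \epsilon - \alpha)$ eventually; a short triangle-inequality argument with $A \cap B_d(x, \epsilon) = \emptyset$ then forces this intersection to be empty, and Corollary \ref{wijsman equivalence} concludes.

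For $(i) \Rightarrow (ii)$, I would argue by contrapositive. Suppose some proper closed ball $\overline{B_d}(x_0, \epsilon_0)$ is not totally bounded; a greedy extraction yields $r > 0$ and a sequence $(a_n) \subseteq \overline{B_d}(x_0, \epsilon_0)$ with $d(a_i, a_j) \geq r$ whenever $i \neq j$. Properness supplies $z \in X$ with $d(x_0, z) > \epsilon_0$. Setting $A_n = \{a_n\}$ and $A = \{z\}$, I would verify that $(A_n)$ is $\mathcal{T}\mathcal{B}_d(X)^+$-convergent to $A$ (each totally bounded $S$ admits a finite cover by $r/2$-balls, each of which contains at most one $a_n$, so $A_n \cap S = \emptyset$ eventually), while $\tau_{W_d}^+$-convergence fails at the test point $x_0$ because $d(x_0, A_n) \leq \epsilon_0 < d(x_0, A)$ exhibits a uniform positive gap. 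The main obstacle will be coordinating these two requirements in the counterexample: the $r$-separation must defeat \emph{every} totally bounded test set simultaneously, and properness of the ball (rather than some weaker hypothesis) is what supplies the witness $z$ with positive margin outside the ball, clarifying why this is the correct formulation.
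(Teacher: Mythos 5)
Your proof is correct, and for the forward direction it takes a genuinely different route from the paper. For $(ii)\Rightarrow(i)$ you essentially reproduce the paper's argument: properness of $\overline{B_d}(x,\alpha)$ follows from $A\neq\emptyset$ missing $B_d(x,\epsilon)$, total boundedness puts the ball in $\mathcal{T}\mathcal{B}_d(X)$, and the $\epsilon-\alpha$ tolerance plus the triangle inequality yields $A_\lambda\cap B_d(x,\alpha)=\emptyset$ eventually, which is exactly the miss-type criterion of Corollary \ref{wijsman equivalence}; the paper packages the same computation by citing Theorem \ref{upperbornologicalrepresentation}. For $(i)\Rightarrow(ii)$, however, the paper does not build a counterexample: it first observes (via Lemma \ref{upper S topology diff bornology} and Corollary \ref{allthreerelations}) that statement $(i)$ is precisely the coincidence $\tau_{\mathcal{S},d}^+=\mathcal{S}^+$ for $\mathcal{S}=\mathcal{T}\mathcal{B}_d(X)$, and then applies the general necessity half of Theorem \ref{equivalence S topology and bornological} to the constant functions $f=\epsilon+2r$, $g=\epsilon+3r$ on the singleton $\{x\}$, concluding $B_d(x,\epsilon+2r)\in\mathcal{T}\mathcal{B}_d(X)$ and hence $\overline{B_d}(x,\epsilon)\in\mathcal{T}\mathcal{B}_d(X)$. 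Your contrapositive argument instead manufactures an explicit $r$-separated sequence $(a_n)$ in a proper, non-totally-bounded closed ball and uses the singletons $A_n=\{a_n\}$, $A=\{z\}$; the verification that every totally bounded test set meets only finitely many $a_n$ (finite cover by $r/2$-balls, each catching at most one $a_n$) and that $d(x_0,A)-d(x_0,A_n)\geq d(x_0,z)-\epsilon_0>0$ blocks Wijsman convergence is sound. Your version is more elementary and self-contained -- it exhibits a concrete sequence of singletons rather than the abstract net over a directed set of bornology members constructed inside the proof of Theorem \ref{equivalence S topology and bornological} -- while the paper's version buys uniformity: the same general theorem dispatches all the special bornologies ($\mathcal{K}(X)$, $\mathcal{B}_d(X)$, $\mathcal{P}_0(X)$, etc.) treated in this section without case-by-case constructions.
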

	\begin{proof}
			$(i) \Rightarrow (ii)$. Let $x \in X$ and $\epsilon > 0$ be such that $\overline{B_d}(x, \epsilon) \neq 	X$. Then there exists a $y \in X$ such that $d(x, y) > \epsilon$. Put $d(x,y) - \epsilon = 3r$. So $B_d(x, \epsilon + 3r) \neq X$. Therefore by Theorem \ref{equivalence S topology and bornological}, we have, $B_d(x, \epsilon +2r) \in \mathcal{T}\mathcal{B}_d(X)$. Hence $\overline{B_d}(x, \epsilon) \in \mathcal{T}\mathcal{B}_d(X)$.    
		
		$(ii) \Rightarrow (i)$.  Let $(A_{\lambda})$ be a net that $\mathcal{T}\mathcal{B}_d(X)^+$-converges to $A$ in $CL(X)$. Suppose $x \in X$ and $0< \alpha < \epsilon $ such that $A \cap B_d(x, \epsilon) = \emptyset$. Set $\overline{B_d}(x, \alpha) = S$. Then by the hypothesis, $S \in \mathcal{T}\mathcal{B}_d(X)$. Clearly, $A \cap B_d(S, \frac{\epsilon - \alpha}{2}) = \emptyset$. So by $\mathcal{T}\mathcal{B}_d(X)^+$-convergence of net $(A_{\lambda})$ to $A$ and using Theorem \ref{upperbornologicalrepresentation}, there exists $\lambda_0$ such that $A_{\lambda} \cap S = \emptyset ~ \forall \lambda \geq \lambda_0$, that is, $A_{\lambda} \cap B_d(x, \alpha) = \emptyset ~ \forall \lambda \geq \lambda_0$. Thus, the net $(A_{\lambda})$ is  $\tau_{W_{d}}^+$-convergent to $A$ in $CL(X)$. 
	\end{proof}

	\bibliographystyle{plain}
	\bibliography{reference_file}

\def\cprime{$'$} \def\cprime{$'$} \def\cprime{$'$}
\begin{thebibliography}{10}

\bibitem{attouch1991topology}
H.~Attouch, R.~Lucchetti, and R.~Wets.
\newblock The topology of the {$\rho$}-{H}ausdorff distance.
\newblock {\em Ann. Mat. Pura Appl. (4)}, 160:303--320, 1991.

\bibitem{AW-1}
H.~Attouch and R.~Wets.
\newblock A convergence theory for saddle functions.
\newblock {\em Trans. Amer. Math. Soc.}, 280(1):1--41, 1983.

\bibitem{ToCCoS}
G.~Beer.
\newblock {\em Topologies on closed and closed convex sets}, volume 268.
\newblock Kluwer Academic Publishers Group, Dordrecht, 1993.

\bibitem{beer2009operator}
G.~Beer.
\newblock Operator topologies and graph convergence.
\newblock {\em J. Convex Anal}, 16(3-4):687--698, 2009.

\bibitem{beer2023bornologies}
G.~Beer.
\newblock {\em Bornologies and Lipschitz Analysis}.
\newblock CRC Press, Boca Raton, Florida, 2023.

\bibitem{Bcas}
G.~Beer, C.~Costantini, and S.~Levi.
\newblock Bornological convergence and shields.
\newblock {\em Mediterr. J. Math.}, 10(1):529--560, 2013.

\bibitem{beer2013gap}
G.~Beer, C.~Costantini, and S.~Levi.
\newblock Gap topologies in metric spaces.
\newblock {\em Topology Appl.}, 160(18):2285--2308, 2013.

\bibitem{beer1992distance}
G.~Beer, A.~Lechicki, S.~Levi, and S.~Naimpally.
\newblock Distance functionals and suprema of hyperspace topologies.
\newblock {\em Ann. Mat. Pura Appl. (4)}, 162:367--381, 1992.

\bibitem{gapexcess}
G.~Beer and S.~Levi.
\newblock Gap, excess and bornological convergence.
\newblock {\em Set-Valued Anal.}, 16(4):489--506, 2008.

\bibitem{PbciAWc}
G.~Beer and S.~Levi.
\newblock Pseudometrizable bornological convergence is {A}ttouch-{W}ets
  convergence.
\newblock {\em J. Convex Anal.}, 15(2):439--453, 2008.

\bibitem{Ucucas}
G.~Beer and S.~Levi.
\newblock Uniform continuity, uniform convergence, and shields.
\newblock {\em Set-Valued Var. Anal.}, 18(3-4):251--275, 2010.

\bibitem{beer1993weak}
G.~Beer and R.~Lucchetti.
\newblock Weak topologies for the closed subsets of a metrizable space.
\newblock {\em Trans. Amer. Math. Soc.}, 335(2):805--822, 1993.

\bibitem{Idealtopo}
G.~Beer, S.~Naimpally, and J.~Rodr\'{\i}guez-L\'{o}pez.
\newblock {$\mathscr{S}$}-topologies and bounded convergences.
\newblock {\em J. Math. Anal. Appl.}, 339(1):542--552, 2008.

\bibitem{bv1996}
J.~Borwein and J.~Vanderwerff.
\newblock Epigraphical and uniform convergence of convex functions.
\newblock {\em Trans. Amer. Math. Soc.}, 348(4):1617--1631, 1996.

\bibitem{cao}
J.~Cao and J.~Rodr\'{\i}guez-L\'{o}pez.
\newblock On hyperspace topologies via distance functionals in quasi-metric
  spaces.
\newblock {\em Acta Math. Hungar.}, 112(3):249--268, 2006.

\bibitem{costantini1993metrics}
C.~Costantini, S.~Levi, and J.~Zieminska.
\newblock Metrics that generate the same hyperspace convergence.
\newblock {\em Set-valued analysis}, 1:141--157, 1993.

\bibitem{wijsequivalence}
I.~Del~Prete and B.~Lignola.
\newblock On convergence of closed-valued multifunctions.
\newblock {\em Boll. Un. Mat. Ital. B (6)}, 2(3):819--834, 1983.

\bibitem{di2003bombay}
G.~Di~Maio, E.~Meccariello, and S.~Naimpally.
\newblock Bombay hypertopologies.
\newblock {\em Appl. Gen. Topol.}, 4(2):421--444, 2003.

\bibitem{francaviglia1985quasi}
S.~Francaviglia, A.~Lechicki, and S.~Levi.
\newblock Quasiuniformization of hyperspaces and convergence of nets of
  semicontinuous multifunctions.
\newblock {\em J. Math. Anal. Appl.}, 112(2):347--370, 1985.

\bibitem{haus}
F.~Hausdorff.
\newblock {\em Set theory}.
\newblock Chelsea Publishing Co., New York, second edition, 1962.
\newblock Translated from the German by John R. Aumann et al.

\bibitem{hogbe}
H.~Hogbe-Nlend.
\newblock {\em Bornologies and functional analysis}.
\newblock North-Holland Publishing Co., Amsterdam, 1977.

\bibitem{BCL}
A.~Lechicki, S.~Levi, and A.~Spakowski.
\newblock Bornological convergences.
\newblock {\em J. Math. Anal. Appl.}, 297(2):751--770, 2004.

\bibitem{lucchetti2006convexity}
Roberto Lucchetti.
\newblock {\em Convexity and well-posed problems}.
\newblock Springer, New York, 2006.

\bibitem{michael1951}
E.~Michael.
\newblock Topologies on spaces of subsets.
\newblock {\em Trans. Amer. Math. Soc.}, 71:152--182, 1951.

\bibitem{rockafellar2009variational}
R.~T. Rockafellar and R.~Wets.
\newblock {\em Variational analysis}.
\newblock Springer-Verlag, Berlin, 1998.

\bibitem{viet-1}
L.~Vietoris.
\newblock Stetige {M}engen.
\newblock {\em Monatshefte f{\"u}r Mathematik und Physik}, 31(1):173--204,
  1921.

\bibitem{viet-2}
L.~Vietoris.
\newblock Bereiche zweiter {O}rdnung.
\newblock {\em Monatshefte f{\"u}r Mathematik und Physik}, 32(1):258--280,
  1922.

\bibitem{wijsconvergence}
R.~A. Wijsman.
\newblock Convergence of sequences of convex sets, cones and functions. {II}.
\newblock {\em Trans. Amer. Math. Soc.}, 123:32--45, 1966.

\end{thebibliography}

\end{document}